\documentclass{article}
\usepackage{graphicx} 

\title{Thresholds for Reconstruction of Random Hypergraphs \\ From Graph Projections}

\author{Guy Bresler\thanks{Supported in part by NSF Career award CCF-1940205.}
\and Chenghao Guo\thanks{Supported in part by NSF TRIPODS grant DMS-2022448, NSF Career award CCF-1940205, CCF-2131115 and the MIT-IBM Watson AI Lab.} \and Yury Polyanskiy\thanks{Supported in part by the NSF grant CCF-2131115 and the MIT-IBM Watson AI Lab.}}
\date{}

\usepackage[utf8]{inputenc}
\usepackage{amsmath}
\usepackage{amssymb,amsfonts,amsthm}
\usepackage{mathrsfs}
\usepackage{algorithm}
\usepackage{xcolor}
\usepackage{tikz}
\usepackage{fullpage}
\usepackage{dsfont}
\usepackage{thmtools}
\usepackage{thm-restate}
\usepackage{enumitem}
\usepackage[noend]{algpseudocode}
\usepackage{subcaption}
\usepackage{graphicx}
\usetikzlibrary{shapes,arrows,backgrounds,fit,positioning,calc}
\usepackage{subcaption}

\usepackage{hyperref}
\usepackage{cleveref}
\usepackage{xcolor}
\hypersetup{
  colorlinks   = true, 
  urlcolor     = blue, 
  linkcolor    = blue, 
  citecolor   = blue 
}

\usepackage{tikz}
\usetikzlibrary{decorations.pathreplacing}
\usetikzlibrary{arrows,positioning}
%
\DeclareRobustCommand\shape{
 \lower5pt\hbox{
 \hskip-7pt
  \tikzset{circ/.style={circle, draw, fill=black, scale=.15}}
  \begin{tikzpicture}[semithick,scale=.3]
  \node (l1) at (0,.5) [circ]{};
  \node (l3) at (0.5,0.3) [circ]{};
  \draw[-] (l1) to node [auto] {} (l3);
    \end{tikzpicture}
  \hskip-8pt}
}
\newtheorem{theorem}{Theorem}[section]

\newtheorem{lemma}{Lemma}[section]
\newtheorem{corollary}[lemma]{Corollary}
\newtheorem{remark}{Remark}[]
\newtheorem{definition}{Definition}

\newcommand{\var}{\mathsf{Var}}
\DeclareMathOperator*{\E}{{\rm I}\kern-0.18em{\rm E}}
\renewcommand{\Pr}{\,{\rm I}\kern-0.18em{\rm P}}
\newcommand{\1}{\mathds{1}}
\newcommand{\ind}[1]{\1\{#1\}}

\newcommand{\bino}{\mathsf{Binomial}}

\newcommand{\defeq}{\triangleq}

\newcommand{\inv}{^{-1}}

\newcommand{\B}{\Big}
\renewcommand{\b}{\big}

\newcommand{\ER}{{Erd\H os-R\'enyi}}


\newcommand{\cA}{\mathcal{A}}

\newcommand{\cG}{\mathcal{G}}

\newcommand{\cE}{\mathcal{E}}
\newcommand{\cH}{\mathcal{H}}

\newcommand{\cov}{\mathsf{Cov}}

\newcommand{\cS}{\mathcal{S}}

\newcommand{\hG}{H}
\newcommand{\shG}{K}
\newcommand{\rhG}{\cH}
\newcommand{\hE}{\cE}
\newcommand{\he}{h}
\newcommand{\pG}{{G_p}}
\newcommand{\pE}{E_p}
\newcommand{\cliG}{{\rhG_c}}
\newcommand{\cliE}{\cE_c}
\newcommand{\proj}{\mathrm{Proj}}

\newcommand{\prim}{{\mathrm{Proj}^{-1}}}
\newcommand{\cli}{\mathrm{Cli}}
\newcommand{\nei}[1]{N_{#1}}
\newcommand{\aG}{G_{a,d}}
\newcommand{\cdel}[1]{\delta^*_{#1}}
\newcommand{\cliA}{\cA_c}
\newcommand{\naH}{{\shG_f}}
\newcommand{\aut}{\mathrm{aut}}

\newcommand{\bG}{\shG_b}

\newcommand{\hsbm}{\mathrm{HSBM}}
\newcommand{\grow}{\mathrm{Grow}}

\DeclareMathOperator*{\argmin}{arg\,min}

\newcommand\numberthis{\addtocounter{equation}{1}\tag{\theequation}}
\newcommand{\cth}{2-connectivity threshold}
\newcommand{\tcth}{\frac{d-1}{d+1}}
\newcommand{\ath}{\delta_d^a}

\begin{document}
\maketitle

\begin{abstract}
The \emph{graph projection} of a hypergraph is a simple graph with the same vertex set and with an edge between each pair of vertices that appear in a hyperedge.  
We consider the problem of reconstructing a random $d$-uniform hypergraph from its projection. Feasibility of this task depends on $d$ and the density of hyperedges in the random hypergraph. For $d=3$ we precisely determine the threshold, while for $d\geq 4$ we give bounds. All of our feasibility results are obtained by exhibiting an efficient algorithm for reconstructing the original hypergraph, while infeasibility is information-theoretic. 

Our results also apply to mildly inhomogeneous random hypergrahps, including hypergraph stochastic block models (HSBM). A consequence of our results is an optimal HSBM recovery algorithm, improving on \cite{gaudio2023community}. 
\end{abstract}

\section{Introduction}

Graphs and hypergraphs are fundamental structures in diverse fields such as computer science, mathematics, social science, and biology, supporting a wide range of theoretical and applied research areas. 
Hypergraphs generalize graphs, with hyperedges consisting of subsets of the vertices.
Because interactions between entities often occur in groups, such as people dining together or items added to an online shopping cart, many phenomena are best captured using hypergraphs. At the same time, the vast majority of graph algorithms are designed for simple graphs, where edges constitute a \emph{pairwise} relationship.

Given a hypergraph $H$, one can construct a graph $G$ by including an edge between each pair of vertices that appear in some hyperedge in $H$. This corresponds to placing in $G$ a clique on the vertices appearing in each hyperedge of $H$. We say that $G$ is the \emph{projection} of $H$. 

Projecting hypergraphs onto graphs and leveraging graph algorithms is a common strategy for solving problems on hypergraphs. 
This approach has been pursued especially in the domain of community detection within the hypergraph stochastic block model, where algorithms aim to reconstruct communities from similarity matrices, a form of pairwise hypergraph projection
\cite{kim2018stochastic,cole2020exact,gaudio2023community}.
Similar methodologies also exist in hypergraph matching, where the optimal soft matching can be obtained by considering pairwise interactions \cite{zass2008probabilistic}. 
More generally in graph data processing, projection of hypergraphs is used to improve storage efficiency and interpretability, or simply to allow use of existing data structures and algorithms.

There can be many different hypergraphs that project to a given graph $G$, and thus the projection operation is often \emph{lossy}. 
It is not at all clear when projecting a hypergraph and solving some problem on the projected graph is optimal, and in general this depends both on the task and on the hypergraph. One scenario in which projecting to a simple graph does not degrade performance, \emph{whatever the task}, is if it is possible to efficiently reconstruct the hypergraph from the projected graph. 
This motivates the following basic question: under what conditions does projecting a hypergraph result in information loss, and conversely, when can a hypergraph be recovered from its projection?



Beyond serving as a justifying principle for employing hypergraph-to-graph projections in algorithm creation, the task of recovering hypergraphs from their graph projections arises naturally in network analysis. 
The phenomenon of intrinsic hypergraphs appearing as projected graphs is common in real-world networks \cite{zhou2006learning,latapy2008basic,williamson2020random,battiston2020networks}. For instance, two scientists are listed as co-authors on Google Scholar because they collaborate on the same paper \cite{newman2004coauthorship}, two people send emails to each other because they are working on the same project \cite{klimt2004introducing}. In such scenarios, direct methods for detecting higher order interactions are often unavailable, which highlights the importance of hypergraph recovery.

Prior research on this problem has been focused on designing algorithms with good empirical performance; none of the following works have theoretical guarantees. In \cite{young2021hypergraph,lizotte2023hypergraph}, the authors assumed a prior distribution over hypergraphs, and try to sample from the posterior to approximate the original hypergraph. 
The work \cite{wang2022supervised} aims to recover a hypergraph from its graph projection, for a general distribution over initial hypergraph given access to another hypergraph independently sampled from the same distribution. A scoring method was then used to select hyperedges based on their similarity to the sampled hypergraph. 


In this work we aim to provide a deeper understanding of the conditions under which hypergraphs can be recovered from graph projections. 
We study the problem of recovering a \emph{random} $d$-uniform hypergraph, where all hyperedges are of size $d$, from its graph projection. For $d=3$ we determine a precise threshold in the hyperedge density at which recovery is feasible, and give an efficient algorithm to do so when it is. For $d\geq 4$ we provide bounds on the hyperedge density. 
Our analysis relies on analyzing the local structure of random hypergraphs, and in the process we identify useful structural properties of random hypergraphs.

Our results hold also for mildly inhomogeneous random hypergraphs, where edge probabilities may be non-uniform but are all within constant factors of one another. This includes the hypergraph stochastic block model (HSBM).
The question of determining the information-theoretic recovery thresholds for HSBM, given the similarity matrix, was previously posed as an open problem in \cite{gaudio2023community}.
As a by-product of our results, we solve the open problem showing that the information theoretic threshold of HSBM, given the similarity matrix, coincides with that of HSBM given the original hypergraph.\footnote{One of the original motivations of the present paper was to disprove
the claim that the two thresholds are different, made in \cite{gaudio2023community-v1}. Later versions \cite{gaudio2023community} replace this with the statement that the threshold for HSBM recovery from the similarity matrix is open.} This is proven by a reduction that recovers the original hypergraph given the similarity matrix.

\subsection{Hypergraph Reconstruction Problem Formulation}
Before describing our problem formulation we require a couple of definitions.

\subsubsection{Random hypergraphs}
We define the following model of random hypergraphs, generalizing the \ER\ random graph.\footnote{This definition of random hypergraph is equivalent to the definition of random $d$-complex \cite{toth2017handbook} except here we use the language of hypergraphs instead of simplicial complexes. The model considered in \cite{young2021hypergraph} is an inhomogeneous generalization of our model where each hyperedge has a distinct probability of appearing. Projection of random hypergraphs was also proposed as a way to simulate network data \cite{williamson2020random}. 
}

A \emph{random $d$-hypergraph} $\rhG(n,d,p)=([n],\hE_{\hG})$ is a $d$-uniform hypergraph where every size-$d$ hyperedge in $\binom{[n]}{d}$ is included in $\hE_{\rhG}$ with probability $p$ independently. We will use the parameterization $$p=n^{-d+1+\delta}\,,$$ so that the expected degree of each node is on the order $n^\delta$.\footnote{Constant factors do not affect any result of the paper. All of our results also holds with possibly different probabilities of inclusion at different edges, as long as the probability is $\Theta(n^{-d+1+\delta})$. } 

\subsubsection{Graph projection}
Given a hypergraph $H=([n],\hE)$, we consider the \emph{projection} $\proj(H)$ which takes
$d$-uniform hyperedges to ordinary (pairwise, undirected) edges by simply including an edge if  both its endpoints are in a hyperedge:
\[
\proj(\hE) \defeq \b\{(i,j)\in \textstyle{\binom{ [n]}{2}}: i,j\in \he\text{ for some } \he\in \hE\b\}.
\]
Here we overload notation, using $\proj$ both for projection of a set of hyperedges and for the projected graph. 
A random hypergraph $\rhG$ results in a \emph{random projected graph} $\pG=\proj(\rhG)=([n], \pE=\proj(\hE_{\rhG}))$. 
%
For one hyperedge $\he$, we use $\proj(\he)$ to denote $ \proj(\{\he\})$. For a simple graph $G$, we say a hypergraph in $\prim (G)$ is a \emph{preimage} or a \emph{clique cover} of $G$. We will frequently use the fact that projection commutes with union: $\proj(C_1\cup C_2) = \proj(C_1)\cup \proj( C_2)$. 

Our goal is to recover the original hypergraph $\rhG$ from the projected graph $\pG$.

\subsubsection{Exact Recovery}
We say that an algorithm $\cA:\{0,1\}^{[n]\choose 2}\to \{0,1\}^{[n]\choose d}$ mapping a projected graph $\pG$ to a $d$-uniform hypergraph can achieve (asymptotically) \emph{exact recovery} if 
\begin{equation}\label{e:recovery}
    \Pr\b(\cA(\proj(\rhG)) = \rhG\b)=1-o_n(1)\,.
\end{equation}

\begin{remark}\label{rem:delta-range}
   We parameterize $p=p(\delta,d,n)=n^{-d+1+\delta}$ so that the expected degree of a node is $\Theta(n^\delta)$.
    The problem of exact recovery is only interesting when $0\le \delta\le 1$. When $\delta<0$, with high probability $\pG$ only consists of isolated $d$-cliques, so exact recovery is trivial. When $\delta>1$, with high probability $\pG$ is complete, so exact recovery is impossible.
\end{remark}


\paragraph{Information-theoretic Versus Algorithmic Feasibility.}
    The existence of an algorithm $\cA$ satisfying
    \eqref{e:recovery}
    answers the question of whether the projection operator loses information.
Exact recovery is \emph{information theoretically possible} for a certain $\delta$ if there exists an algorithm $\cA$ that can do exact recovery regardless of time complexity. 
When exact recovery is information theoretically possible, we wish to find an efficient algorithm. Exact recovery is said to be \emph{efficiently achievable} for a certain $\delta$ if there exists a polynomial-time algorithm $\cA$ that can achieve exact recovery.

\subsection{Results}

Before describing our results, it will be helpful to gain a qualitative understanding of how the density $p=n^{-d+1+\delta}$ impacts the difficulty of exact recovery. The main intuition is that as we make the hypergraph denser, recovery from the projected graph gets more difficult as projections of different hyperedges begin to overlap. The extreme case where the projected graph is complete was mentioned in Remark~\ref{rem:delta-range}. This intuition is formalized by the following lemma, which is proved in Appendix~\ref{sec:monotone}.

\begin{lemma}[Monotonicity in $\delta$]\label{lem:monotone}
    For any $d\ge 4$ and any $0\le \delta_1<\delta_2\le 1$, if exact recovery is information theoretically possible (or efficiently achievable) when $\delta=\delta_2$, then exact recovery is also information theoretically possible (or efficiently achievable) for $\delta=\delta_1$.  
\end{lemma}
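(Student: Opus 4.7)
The plan is a coupling reduction. Given access to $\proj(\rhG_1)$ where $\rhG_1\sim \rhG(n,d,p_1)$ with $p_i \defeq n^{-d+1+\delta_i}$, I would augment the input with freshly sampled hyperedges so that the combined hypergraph is distributed as an $\rhG(n,d,p_2)$ instance, apply the assumed $\delta_2$-recovery algorithm, and then strip off the added hyperedges.

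Concretely, choose $q$ so that $1-(1-p_1)(1-q)=p_2$, independently sample $\aux\sim \rhG(n,d,q)$, and form $\rhG_2 \defeq \rhG_1\cup \aux$. By independence and the choice of $q$, each hyperedge lies in $\rhG_2$ independently with probability $p_2$, so $\rhG_2\sim \rhG(n,d,p_2)$. Since projection distributes over unions,
$$\proj(\rhG_2) = \proj(\rhG_1)\cup \proj(\aux),$$
which is computable from the original input together with the self-chosen $\aux$. Running the assumed $\delta_2$-algorithm on $\proj(\rhG_2)$ recovers $\rhG_2$ with probability $1-o(1)$, and the final output is $\rhG_2\setminus \aux$.

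The key technical step is to verify that $\rhG_2\setminus \aux = \rhG_1$ with high probability, which reduces to showing $\rhG_1\cap \aux=\emptyset$ with high probability (if the two samples share a hyperedge, that edge is incorrectly deleted). By linearity of expectation,
$$\E\lvert \rhG_1\cap \aux\rvert \;\le\; \binom{n}{d}\, p_1\, q \;=\; O\b(n^{2-d+\delta_1+\delta_2}\b).$$
Since $d\ge 4$ and $\delta_1<\delta_2\le 1$ give $2-d+\delta_1+\delta_2 < 4-d \le 0$, Markov's inequality yields $\rhG_1\cap \aux=\emptyset$ with probability $1-o(1)$, and a union bound with the success event of the $\delta_2$-algorithm finishes the argument.

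The main obstacle is exactly this overlap bound, and it is where the assumption $d\ge 4$ enters: in dimension $3$, the expected overlap can diverge once $\delta_1+\delta_2>1$, so the naive subtract-the-added-hyperedges step destroys exact recovery. Finally, sampling, union, projection, and set difference are all polynomial-time operations, so the reduction preserves efficiency and gives both the information-theoretic and the algorithmic monotonicity conclusions simultaneously.
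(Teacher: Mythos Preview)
Your proposal is correct and is essentially identical to the paper's proof: both augment $\rhG_1$ with an independent $\rhG(n,d,q)$ sample chosen so the union has density $p_2$, apply the $\delta_2$-algorithm to the projected union, and subtract the added hyperedges, using the bound $\binom{n}{d}p_1 q \le n^{2-d+\delta_1+\delta_2}=o(1)$ for $d\ge 4$ to rule out overlap. The only cosmetic differences are notation (your $q$ is the paper's $p_3$) and that you phrase the overlap bound via expectation plus Markov rather than a direct union bound.
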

The lemma is proved via a simple reduction: given $G = \proj(\rhG)$ where $\rhG$ has density $p(\delta_1,n,d)$, we independently sample a random hypergraph $\rhG'$ so that $\rhG\cup \rhG'$ has density $p(\delta_2,n,d)$ and give algorithm $\cA$ (presumed to achieve exact recovery at $\delta_1$) the graph $\proj(\rhG)\cup \proj(\rhG') = \proj(\rhG\cup \rhG')$. We then remove the hyperedges in $\rhG'$ from the output of $\cA$, and this succeeds as long as $\rhG$ and $\rhG'$ have no hyperedges in common. This latter property holds for $d\geq 4$, but not for $d=3$.

It follows that for $d\ge 4$ there must exist a threshold $\cdel d$ above which exact recovery is possible and below which exact recovery is impossible. Formally, let
\[
\cdel d \defeq \inf \{\delta: \text{exact recovery is impossible at }\delta\}\,.
\]
We have the following corollary from the lemma above.
\begin{corollary}[Threshold for Exact Recovery]\label{cor:monotone}
    For $d\ge 4$, exact recovery is information theoretically possible for any $\delta<\cdel d$ and impossible for any $\delta>\cdel d$.
\end{corollary}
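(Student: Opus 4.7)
The plan is to deduce the corollary directly from Lemma~\ref{lem:monotone} together with the infimum definition of $\cdel d$; the argument is purely formal. Let $S \defeq \{\delta \in [0,1] : \text{exact recovery is information-theoretically impossible at }\delta\}$, so by definition $\cdel d = \inf S$.

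First, I would contrapose Lemma~\ref{lem:monotone}: for $d \ge 4$, if exact recovery is impossible at some $\delta_1$, then it is impossible at every $\delta_2 > \delta_1$. This is exactly the statement that $S$ is upward closed as a subset of $[0,1]$.

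Next, take any $\delta < \cdel d$. Since $\delta$ lies strictly below $\inf S$, we have $\delta \notin S$, so exact recovery is information-theoretically possible at $\delta$. Conversely, take any $\delta > \cdel d$. By the characterization of the infimum, there exists $\delta' \in S$ with $\cdel d \le \delta' < \delta$, and by upward closure of $S$ we conclude $\delta \in S$, i.e., exact recovery is impossible at $\delta$.

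There is no real obstacle here: the corollary follows by unpacking definitions, with monotonicity of Lemma~\ref{lem:monotone} being the only input. The only minor care needed is with the boundary cases $\cdel d \in \{0,1\}$, where one of the two intervals $[0,\cdel d)$ or $(\cdel d, 1]$ may be empty and the corresponding half of the statement becomes vacuous.
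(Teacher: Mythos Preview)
Your proof is correct and matches the paper's approach: the paper simply states that the corollary follows from Lemma~\ref{lem:monotone}, and your argument is precisely the standard unpacking of that implication via the contrapositive and the definition of the infimum.
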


The statement of the corollary is also true for $d=3$, but this requires a different argument.
We determine the location of the threshold when $d=3$ and we also prove that exact recovery precisely at the threshold is impossible.

\begin{theorem}\label{thm:main-three}
    For $d=3$, there is an efficient algorithm for exact recovery when $\delta<2/5$ and exact recovery is information theoretically impossible when $\delta\ge 2/5$.
\end{theorem}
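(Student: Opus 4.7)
The theorem makes two claims that I will treat separately: information-theoretic impossibility for $\delta\geq 2/5$, and an efficient algorithm for $\delta<2/5$.

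\textbf{Impossibility ($\delta\geq 2/5$).} The plan is to exhibit a local two-way ambiguity with expected count bounded away from $0$ at the threshold. Call an ordered quadruple $(a,b,c,d)$ of distinct vertices a \emph{swap configuration} if $\{a,b,c\}\in \rhG$, $\{a,b,d\}\notin \rhG$, and each of the four pairs $\{a,c\},\{b,c\},\{a,d\},\{b,d\}$ is contained in some hyperedge distinct from $\{a,b,c\}$ and $\{a,b,d\}$. Under these conditions the hypergraph $\rhG'\defeq(\rhG\setminus\{\{a,b,c\}\})\cup\{\{a,b,d\}\}$ has the same projection $\pG$ and the same prior probability as $\rhG$, so on this event no estimator can identify the true hypergraph with probability better than $1/2$. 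Let $N$ count swap configurations. By approximate independence of the four coverage events (the coupling via ``double-cover'' hyperedges like $\{a,c,d\}$ contributes only $O(p)$ correction), I will verify $\E[N]=\Theta(n^8 p^5) = \Theta(n^{-2+5\delta})$, which is $\Omega(1)$ at $\delta=2/5$ and $\omega(1)$ for $\delta>2/5$. A second-moment computation, partitioning pairs of swap configurations by vertex and hyperedge overlap, should yield $\var(N)=o(\E[N]^2)+O(\E[N])$; Paley--Zygmund then gives $\Pr(N\geq 1)=\Omega(1)$, and averaging the per-configuration error of $1/2$ yields $\Pr(\text{error})=\Omega(1)$, hence information-theoretic impossibility.

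\textbf{Algorithm ($\delta<2/5$).} The plan is to analyze the following iterative decoder: while some edge $e\in \pG$ lies in a unique unmarked triangle of $\pG$, mark that triangle as a hyperedge; when this loop terminates, for each connected component of still-uncovered edges, enumerate triangle covers and output a minimum-size one. The iterative step is correct by construction, since each marked triangle is forced by a coverage constraint, so correctness reduces to showing that w.h.p.\ every true hyperedge is either marked or selected in the min-cover step. By first-moment bounds on the hypergraph motifs $(k,h)$ (expected count $\Theta(n^{k-h(2-\delta)})$) that could obstruct the algorithm: optional-hyperedge motifs $(k,h)=(6,4)$ contribute $\Theta(n^{-2+4\delta})=o(1)$ for $\delta<1/2$ (so no planted hyperedge is completely redundant with respect to the rest of $\rhG$), and swap motifs $(k,h)=(8,5)$ contribute $\Theta(n^{-2+5\delta})=o(1)$ for $\delta<2/5$ (so no residual component admits two minimum covers). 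Larger motifs such as $(10,6)$ and $(12,7)$ may have $\omega(1)$ expected count at $\delta=2/5$, but the residual components they produce admit a \emph{unique} minimum triangle cover that coincides with the planted hyperedge (the alternative cover uses three hyperedges in place of one, ruled out by strict minimality), so the algorithm still recovers correctly.

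\textbf{Main obstacle.} The principal technical challenges are the second-moment estimate in the impossibility proof and the correctness of the min-cover step. For the second moment, the delicate case is pairs of swap configurations sharing a coverage hyperedge, where the joint probability slightly exceeds the product; one must carefully track overlap contributions by case analysis on the intersection pattern to show they remain $O(\E[N]^2)+O(\E[N])$. For the min-cover step, one must argue that at $\delta<2/5$ the residual components are of bounded size w.h.p.\ (permitting efficient exhaustive enumeration), and that these components contain no swap motif, so that the planted hyperedges form the unique minimum triangle cover and are correctly identified by the algorithm.
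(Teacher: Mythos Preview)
Your impossibility argument is essentially correct and uses the same ambiguous structure as the paper (your swap configuration is exactly the paper's graph $G_{a,3}$). One point you leave implicit: to deduce $\Pr(\cA\neq\rhG)\ge\tfrac12\Pr(N\ge1)$ you need that the sibling $\rhG'$ also admits a swap configuration (the reversed one), so that within each fiber $\{H:\proj(H)=G,\ |H|=k\}$ the set of hypergraphs with $N\ge1$ has size at least two whenever it is nonempty; any estimator then hits at most one of them. The paper takes a slightly different route, showing $G_{a,3}$ appears as an isolated $2$-connected component of $\cliG$, so that the ambiguity event depends only on $\pG$ and the posterior bound is immediate.

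The algorithmic half has a genuine gap. You assert that if no $(8,5)$ swap motif appears then every residual component has a unique minimum cover, but this implication is precisely the hard step and you do not prove it. A priori there could be an ambiguous graph whose two minimum preimages differ in several hyperedges simultaneously, with no single-hyperedge swap connecting them, and which nonetheless appears at some $\delta<2/5$; your remarks about $(10,6)$ and $(12,7)$ motifs dispose of one particular alternative cover (``three hyperedges in place of one'') but do not exclude other covers of equal size. The paper closes this gap not by a structural argument but by an exhaustive computer search: it enumerates every $2$-connected sub-hypergraph with non-vanishing expected count at $\delta<2/5$ and verifies that none has an ambiguous projection. Without that search, or a replacement structural lemma showing that every ambiguous graph forces a swap configuration in each of its minimum preimages, your achievability direction is incomplete. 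Separately, you do not prove that the residual components are of bounded size (needed for efficiency), and your peeling rule ``unique \emph{unmarked} triangle'' can mark a non-hyperedge once some triangles have already been marked; the paper sidesteps both issues by working directly with $2$-connected components of the clique hypergraph, which it shows have constant size for all $\delta<\tfrac12$.
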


 For $d\ge 4$, as stated in the following two theorems, we demonstrate that the threshold $\cdel d$ must lie in a certain interval. Furthermore, we find an efficient algorithm (in fact, attaining the optimal probability of reconstruction error) in the regime where we show that exact recovery is possible. It is worth noting that our algorithm does not need to know $p$ as an input parameter.
The results are summarized in Table~\ref{table:delta_bounds}.

\begin{theorem}\label{thm:main-four-five}
    For $d=4,5$, there is an efficient algorithm for exact recovery when $\delta<1/2$ and exact recovery is information theoretically impossible when $\delta\ge \frac{2d-4}{2d-1}$.
\end{theorem}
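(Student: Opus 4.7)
I would prove the algorithmic upper bound ($\delta<1/2$) and the information-theoretic lower bound ($\delta\ge (2d-4)/(2d-1)$) separately, using complementary structural and counting arguments. The algorithmic part rests on the fact that for $\delta<1/2$, with high probability no two distinct hyperedges of $\rhG$ share at least $3$ vertices, since a first-moment bound gives
\[
\E\B[\#\{(\he,\he')\in \hE_{\rhG}^2:\he\ne\he',\ |\he\cap\he'|\ge 3\}\B]=O(n^{2d-3}p^2)=O(n^{2\delta-1})=o(1).
\]
Under this property, every $d$-clique in $\pG$ is either the projection of a single true hyperedge or an ``accidental'' $d$-clique formed by several hyperedges whose pairwise intersections are of size at most $2$. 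I would enumerate the finite list of accidental patterns (up to isomorphism), control the expected count of each via a refined first-moment bound, and then output each $d$-clique of $\pG$ except those ruled out by a local check against these patterns; the procedure runs in $n^{O(d)}$ time.

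For the impossibility bound I would exhibit a local ``swap configuration'' on $V=d+1+2(d-1)(d-2)$ vertices in which two candidate target hyperedges are information-theoretically interchangeable. Set $\he\defeq\{v_1,\ldots,v_{d-1},v_d\}$ and $\he'\defeq\{v_1,\ldots,v_{d-1},v_{d+1}\}$, so that $\proj(\he)\triangle\proj(\he')$ consists of the $2(d-1)$ edges $\{v_i,v_a\}$ with $i\in\{1,\ldots,d-1\}$ and $a\in\{d,d+1\}$. For each such pair $(i,a)$ introduce a ``cover'' hyperedge $\he_{i,a}\defeq\{v_i,v_a\}\cup U_{i,a}$, where $U_{i,a}$ is a set of $d-2$ fresh external vertices and the $U_{i,a}$'s are pairwise disjoint. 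A direct check shows that in the presence of all $M=2(d-1)$ covers, replacing $\he$ by $\he'$ (or vice versa) preserves the projection of the entire hypergraph, since the covers already supply every edge of $\proj(\he)\triangle\proj(\he')$, while the edges of $\proj(\he)\cap\proj(\he')$ lie on $\{v_1,\ldots,v_{d-1}\}$ and are contributed identically by either target.

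The expected number of such configurations in $\rhG$ with one of $\he,\he'$ additionally present is
\[
\Theta\b(n^V p^{M+1}\b)=\Theta\b(n^{(2d-1)\delta-(2d-4)}\b),
\]
which is $\Omega(1)$ exactly when $\delta\ge (2d-4)/(2d-1)$. A second-moment computation would promote this to an $\Omega(1)$ probability of existence, and because $\he$ and $\he'$ carry identical priors, the posterior over the two options conditional on $\pG$ is balanced, forcing any algorithm to err with probability at least $1/2$ on each such configuration and thereby ruling out exact recovery. I expect the main technical difficulty to lie in verifying that the other random hyperedges of $\rhG$ (those touching the vertices of the swap configuration but not listed above) cannot inadvertently disambiguate $\he$ from $\he'$; by construction the only edges of $\pG$ affected by the swap lie in $\proj(\text{covers})$, but making this rigorous requires a careful conditioning argument on the background hypergraph.
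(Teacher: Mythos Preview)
For the impossibility direction your construction is exactly the paper's (with $v_d,v_{d+1}$ renamed $u_1,u_2$ and your covers $\he_{i,a}$ renamed $\he_i^w,\he_i^z$): same $2d-1$ hyperedges on $2d^2-5d+5$ vertices, same threshold. Your swap argument is in fact cleaner than the paper's route through $2$-connected components, and your worry about background hyperedges ``disambiguating'' $\he$ from $\he'$ is unfounded: once the $2(d-1)$ covers are present, the involution swapping the indicators of $\he$ and $\he'$ preserves $\proj(\rhG)$ \emph{regardless} of the rest of $\rhG$, because $\proj(\he)\triangle\proj(\he')\subseteq\proj(\text{covers})\subseteq\proj(\rhG\setminus\{\he\})$. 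So conditioning on the covers and on all of $\rhG\setminus\{\he,\he'\}$ already yields a symmetric posterior. What you do gloss over is the second-moment step: promoting $\E[\#\text{configurations}]=\Omega(1)$ to $\Pr(\exists\text{ configuration})=\Omega(1)$ requires the hypergraph $\{\he\}\cup\{\text{covers}\}$ to be balanced (no sub-hypergraph with strictly larger edge-to-vertex ratio). The paper checks this by a case analysis (Lemma~3.3); it is not automatic.

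The achievability direction has a real gap. Your first-moment observation that for $\delta<1/2$ no two hyperedges of $\rhG$ share three or more vertices is correct, but it does not let you ``control the expected count'' of accidental $d$-cliques to $o(1)$: the maximum clique cover already fails once $\delta\ge(d-3)/d$ (Theorem~2.1 of the paper), which is $1/4$ for $d=4$ and $2/5$ for $d=5$. So throughout the interval $((d-3)/d,\,1/2)$ accidental $d$-cliques exist with high probability, and your algorithm must \emph{distinguish} them from true hyperedges. A ``local check against patterns'' can do this only if no ambiguous configuration --- a projected subgraph with two minimum preimages, one declaring the clique true and the other declaring it accidental --- ever occurs, and ruling that out is exactly the substantive step. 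The paper's proof here is not analytic: it shows the MAP rule is efficient for all $\delta<(d-1)/(d+1)$ and then verifies by a computer search over constant-size $2$-connected configurations (Lemma~2.6 and Appendix~A) that no ambiguous graph appears when $\delta<1/2$. Your bounded-intersection property does shrink the search space (the paper uses a weaker version, Lemma~A.1, to seed the search), but it does not replace it.
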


\begin{theorem}\label{thm:main-large-d}
    For $d\ge 6$, there is an efficient algorithm for exact recovery when $\delta<\frac{d-3}{d}$ and exact recovery is information theoretically impossible when $\delta\ge \frac{d^2-d-2}{d^2-d+2}$.
\end{theorem}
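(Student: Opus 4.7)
I will prove the two directions of the theorem separately, following the framework used for Theorems~\ref{thm:main-three} and~\ref{thm:main-four-five} but with the critical local configurations adapted to $d\ge 6$.

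For the achievability direction, the plan is to produce an efficient algorithm that scans the projected graph $\pG$ and classifies each $d$-clique as either a true hyperedge or a ``fake'' clique (produced by overlapping projections of other hyperedges) via a local test on its immediate neighborhood. Correctness then reduces to a structural lemma asserting that, with high probability, $\rhG$ does not contain any ``dangerous'' local configuration able to fool the classifier. The first-moment estimate for a configuration with $v$ vertices and $e$ hyperedges is $\Theta(n^{v-e(d-1-\delta)})$, so the strategy is to enumerate the finitely many minimal dangerous motifs, verify that each has expectation $o(1)$ for $\delta<(d-3)/d$, and identify the extremal motif whose first-moment threshold is exactly $\delta=(d-3)/d$. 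The algorithm is then analogous to those used for Theorems~\ref{thm:main-three} and~\ref{thm:main-four-five}, but the set of local tests grows with $d$ to reflect the richer collection of hyperedge overlap patterns.

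For the impossibility direction, the plan is to exhibit an explicit ``swap pair'': a small vertex set $U\subset [n]$ supporting two distinct families of hyperedges $H_1\ne H_2$ with $\proj(H_1)=\proj(H_2)$ and $|H_1|=|H_2|$, so that the Bayes-optimal decoder cannot distinguish $H_1$ from $H_2$ once one of them appears as a sub-hypergraph of $\rhG$. Computing the expected number of such swap pairs via the same $\Theta(n^{v-e(d-1-\delta)})$ formula, and then performing a second-moment calculation to convert expectation into an $\Omega(1)$ probability of existence, will yield an $\Omega(1)$ failure probability for any recovery algorithm. The threshold $\delta=(d^2-d-2)/(d^2-d+2)$ should correspond to the minimal swap-pair motif whose first-moment threshold matches.

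The main obstacle will be identifying the precise extremal motifs in each direction. For small $d$ these motifs are natural small-scale configurations, but for $d\ge 6$ the enumeration is more involved because both the dangerous motifs and the swap pairs may span larger vertex sets and more hyperedges, and one must argue that no smaller motif achieves a stricter threshold. A secondary obstacle is verifying algorithm correctness on the high-probability ``good'' event: the local classifier must correctly handle every pattern of hyperedge overlap that can still arise in the absence of dangerous motifs, which requires a careful case analysis of all admissible local intersection structures.
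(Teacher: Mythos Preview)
Your achievability plan is headed in the right direction but over-engineered. The paper's algorithm for the range $\delta<(d-3)/d$ is simply the maximum clique cover $\cliA$ (Algorithm~\ref{alg:clique-cover}): declare \emph{every} $d$-clique of $\pG$ to be a hyperedge, with no local test and no case analysis at all. Correctness then reduces to exactly the first-moment statement you describe---that no ``fake'' $d$-clique exists w.h.p.---and the extremal dangerous motif (a clique on $v,u_1,\dots,u_{d-1}$ covered by one hyperedge $\{u_1,\dots,u_d\}$ together with $d-1$ hyperedges $\{v,u_i,w_i^{(1)},\dots,w_i^{(d-2)}\}$) is what pins the threshold at $(d-3)/d$. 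The nontrivial work is not in the algorithm but in the combinatorial optimization (Lemma~\ref{lem:optimal-clique-cover}) showing that this motif is extremal among \emph{all} ways of manufacturing a fake clique; your proposal flags this as an obstacle but gives no indication of how to resolve it.

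Your impossibility plan has a genuine gap. You propose an equal-size swap pair $|H_1|=|H_2|$, which is precisely the ambiguous-graph framework of Section~\ref{sec:impossibility} used for $d\le 5$. The paper explicitly explains why this breaks for $d\ge 6$: the natural swap pair $G_{a,d}$ only begins to appear once $\delta\ge\frac{2d-4}{2d-1}$, which for $d\ge 6$ already exceeds the 2-connectivity threshold $\frac{d-1}{d+1}$, so the pair does not occur as an isolated component and the ``two equally likely preimages'' conclusion is no longer available. The paper sidesteps this with a strictly simpler obstruction that needs no isolation: a configuration $\bG$ in which one hyperedge $\{v_1,\dots,v_d\}$ is \emph{redundant}, each of its $\binom{d}{2}$ pairs being covered by a separate hyperedge $\{v_i,v_j,u_{ij}^{(1)},\dots,u_{ij}^{(d-2)}\}$. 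Whenever $\bG\subset\rhG$ (as a sub-hypergraph, not as a component), deleting the redundant hyperedge yields a strictly smaller preimage of the \emph{entire} $\pG$, so MAP certainly errs. Computing $m(\bG)=\frac{\binom{d}{2}+1}{d+\binom{d}{2}(d-2)}$ and invoking Lemma~\ref{lem:subgraph_threshold} gives the threshold $\frac{d^2-d-2}{d^2-d+2}$ directly. The idea you are missing is that you do not need $|H_1|=|H_2|$; it is both sufficient and much easier to force the true hypergraph to be a non-minimum preimage.
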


\begin{table}[t]
\centering
\begin{tabular}{|c|c|c|}
\hline
Value of \(d\) & Lower Bound for \(\cdel d\) & Upper Bound for \(\cdel d\) \\ \hline
\(3\) & \(2/5\) & \(2/5\) \\
\(4\) & \(1/2\) & \(4/7\) \\
\(5\) & \(1/2\) & \(2/3\) \\
\(d\ge 6\) & \(\frac{d-3}{d}\) & \(\frac{d^2-d-2}{d^2-d+2}\) \\ \hline
\end{tabular}
\caption{Bounds for hyperedge density threshold \(\cdel d\).}
\label{table:delta_bounds}
\end{table}

For $d=4$ and $5$, we conjecture that the correct threshold is at $\frac{2d-4}{2d-1}$ (note this is the case for $d=3$). Our methodology enables proving the conjecture by verifying certain combinatorial properties for finitely many graphs, a check that can be carried out with computer assistance. However, the computation required is significant and we were unable to complete the computer verification. We elaborate on this in Section~\ref{s:mainIdeas}.

\subsubsection{Application to Hypergraph Stochastic Block Model}
\label{s:HSBM}
We now discuss the application of our results to the Hypergraph Stochastic Block Model (HSBM).
As we explain momentarily, a byproduct of our result is that community detection from the graph projection of the HSBM is equivalent to community detection given the original HSBM hypergraph, and this is also equivalent to doing so given the similarity matrix (defined below). 

The model $\hsbm(d,n,q_1,q_2)$ describes a random $d$-uniform hypergraph on $n$ vertices, parameterized by $q_1$ and $q_2$. A sample $\rhG$
is generated as follows. First an assignment of labels $\sigma\in\{\pm 1\}^n$ for the vertices is sampled uniformly at random from all assignments with equal number of $+1$ and $-1$ ($n$ is assumed to be even). Conditional on $\sigma$, for each $\he\in\binom{[n]}{d}$, the hyperedge $\he=\{i_1,\cdots,i_d\}$ is included in $\rhG$ independently with probability
\[
\Pr(\he\in\rhG )=
\begin{cases}
    q_1 &\text{ if } \sigma_{i_1} = \sigma_{i_2} = \cdots = \sigma_{i_d}\\
    q_2 &\text{otherwise}\,.
\end{cases}
\]
The probabilities $q_1$ and $q_2$ are parameterized as 
$q_1 = \alpha\log n/\binom{n-1}{d-1}$ and $q_2 = \beta\log n/\binom{n-1}{d-1}$. 

In the \emph{community recovery} problem, we are given a sample hypergraph $\rhG\sim \hsbm(d,n,q_1,q_2)$ and we want to recover the assignment for all vertices (up to global sign flip).

The \emph{similarity matrix} $W$ of a hypergraph $H=([n],\hE)$  is defined to be 
\[
W_{ij} = |\{\he\in \hE: i,j\in \he\}|\,.
\]
In \cite{kim2018stochastic,cole2020exact,gaudio2023community}, the similarity matrix of the hypergraph is used as the algorithm input. A basic question is: does using the similarity matrix lose performance as compared to using the original hypergraph? Our result shows that this is not the case. Specifically, if there is an algorithm that recovers the assignment for some $d,\alpha$ and $\beta$ with the hypergraph as input, then there exists an algorithm that recovers the assignment for the same $d,\alpha$ and $\beta$ with the similarity matrix as input. 
This yields an algorithm for exact recovery given the similarity matrix that outperforms those in prior work.


\begin{theorem}
    For any $d, \beta$ and $\alpha$,
    given the similarity matrix $W$ of $\hsbm(d,n,q_1,q_2)$ where $q_1 = \alpha\log n/\binom{n-1}{d-1}$ and $q_2 = \beta\log n/\binom{n-1}{d-1}$, we can exactly recover the hypergraph with high probability.
\end{theorem}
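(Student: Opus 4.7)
The plan is to deduce this theorem as a direct consequence of Theorems~\ref{thm:main-three}, \ref{thm:main-four-five}, and \ref{thm:main-large-d}, extended to the mildly inhomogeneous setting as explicitly promised in the footnote accompanying the definition of $\rhG(n,d,p)$. Concretely, I would first observe that the similarity matrix $W$ determines the projected graph $\pG = \proj(\rhG)$ since $(i,j) \in \pE$ if and only if $W_{ij} \geq 1$. Thus any polynomial-time algorithm that reconstructs $\rhG$ from $\pG$ can be turned into one that takes $W$ as input by prepending this trivial thresholding step, so it suffices to show that $\rhG$ can be recovered from $\pG$ with high probability.

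Next I would verify that $\hsbm(d,n,q_1,q_2)$ lies strictly in the regime covered by the reconstruction theorems. Each hyperedge is independently present with probability either $q_1 = \alpha\log n / \binom{n-1}{d-1}$ or $q_2 = \beta\log n/\binom{n-1}{d-1}$, depending on whether its $d$ endpoints all share a label. Both values satisfy $q_i = \Theta(\log n \cdot n^{-d+1})$ when $\alpha, \beta$ are positive constants, and they differ by at most a constant factor $\alpha/\beta$. In the parameterization $p = n^{-d+1+\delta}$, this yields $\delta = o_n(1)$. For every fixed $d \geq 3$, the threshold $\cdel d$ is bounded below by a positive constant ($2/5$ for $d=3$, $1/2$ for $d=4,5$, and $(d-3)/d$ for $d\geq 6$), so $\delta < \cdel d$ holds for all sufficiently large $n$. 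This places HSBM well inside the efficient-reconstruction regime of our main theorems, interpreted for mildly inhomogeneous hypergraphs.

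The final step is to apply the polynomial-time algorithm from Theorems~\ref{thm:main-three}--\ref{thm:main-large-d} to $\pG$, which recovers $\rhG$ with probability $1 - o_n(1)$. It is essential here that these algorithms do not need $p$ as an input parameter, as remarked just before Theorem~\ref{thm:main-four-five}; otherwise the two-valued hyperedge probability in HSBM would prevent a direct application. Composing with the thresholding reduction above then produces an algorithm taking $W$ to $\rhG$ with high probability.

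There is essentially no hard step: this theorem is a corollary of the main reconstruction results applied to a particularly favorable density regime. The only subtle point is ensuring that the inhomogeneous extension of the reconstruction results genuinely covers the HSBM probabilities, which is guaranteed by the $\Theta(n^{-d+1+\delta})$ condition stated in the paper's footnote (valid whenever $\alpha$ and $\beta$ are both positive constants). Degenerate cases such as $\alpha = 0$ or $\beta = 0$, where the two probabilities are not of the same order, can be handled separately by an elementary argument: in each of these cases the support structure of $\rhG$ is even more constrained, and the projected graph is already sparse enough that direct clique-based recovery succeeds.
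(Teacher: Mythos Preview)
Your proposal is correct and follows essentially the same approach as the paper: observe that the HSBM density is far below the reconstruction threshold and invoke the achievability results in the inhomogeneous setting. The paper's proof is slightly more direct in that it argues the achievability proofs only need an \emph{upper} bound $p = O_n(n^{-d+1+\delta})$ (citing Lemma~\ref{lem:exp-dec}), which automatically covers the cases $\alpha=0$ or $\beta=0$ without a separate argument; your version instead invokes the $\Theta(n^{-d+1+\delta})$ footnote and then patches the degenerate cases by hand. Your explicit mention of the thresholding step $W \mapsto \pG$ is a useful clarification that the paper leaves implicit.
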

\begin{proof}
In the HSBM parameter regime, the edge density is $\Theta(n^{-d+1}\log n)$, which is far below the critical threshold $n^{-d+1+\cdel d}$ and indeed also far below our \emph{lower bound} on the critical threshold (i.e., our algorithms succeed in this range). Note that the HSBM may not appear to be within the setting of this paper because: 
\begin{enumerate}
    \item The probability of having a hyperedge depends on the assignment of the nodes.
    \item There is a constant that differs across hyperedges, as well as a $\log n$ factor, in front of the probability.
\end{enumerate}
However, all of our achievability results below the critical threshold $p=n^{-d+1+\cdel d}$ only require an upper bound on the hyperedge probabilities, regardless of whether the probability depends on specific edges. For instance, in the proof of Lemma~\ref{lem:exp-dec}, we only used the fact that $p=O_n(n^{-d+1+\delta})$. In the regime of HSBM both $q_1$ and $q_2$ are $O_n(n^{-d+1+\delta})$, so the argument still holds. 
In this paper we nevertheless use the parameterization $p=n^{-d+1+\delta}$ for clarity of exposition.
\end{proof}


\subsection{Notation}
We always use $H$ for hypergraphs, $\he$ for hyperedges and $\hE$ for a set of hyperedges. $G$ stands for a simple graph, $e$ is used to denote an edge, and $E$ denotes a set of edges. The \emph{size} of a graph (hypergraph) means the number of edges (hyperedges) in the graph (hypergraph). We often identify a graph or hypergraph simply by its edge set, which causes no ambiguity in the case that every vertex is in some edge (i.e., there are no isolated vertices).

All the probabilities $\Pr$ are in the probability space defined by the random $d$-hypergraph $\rhG(n,d,p)$. We denote by $X_{\hG}$ the random variable equal to the number of appearances of $\hG$ as a sub-hypergraph of $\rhG$.



\section{Main Ideas}
\label{s:mainIdeas}
As a warm up and to introduce some notation and ideas, we first describe a simple algorithm that produces a hypergraph from a graph by including every possible hyperedge. This can result in a hypergraph that has many more hyperedges than the maximum \emph{a posteriori} (MAP) hypergraph, and therefore has far lower posterior probability. Correspondingly, this simple algorithm succeeds in a smaller range of edge densities than the MAP rule, however, this algorithm does turn out to succeed in a nontrivial parameter range. We then describe our algorithm for constructing the MAP hypergraph and the associated guarantees.

\subsection{Maximum Clique Cover Algorithm}
\label{sec:maxclique}
When the graph is so sparse that each hyperedge appears as an isolated clique, exact recovery is easily achieved by creating a hypergraph with a hyperedge for every clique of the projected graph $\pG$. This algorithm turns out to succeed far beyond the regime where hyperedges do not overlap.

Let the \emph{$d$-clique hypergraph }$\cliG$ of the projected graph $\pG=([n], E_p)$ be the hypergraph $\cliG = ([n],\cliE=\cli(\pE))$ where
\[
\cli(E) = \b\{\he\in \textstyle{\binom{[n]}{d}} : (i,j)\in E
\text{ for every } \{i,j\}\subset h
\b\}\,.
\]
%
Denote by $\cliA$ the algorithm converting every size-$d$ clique in $\pG $ to a hyperedge in the output graph, i.e., $\cliA(\pG) = \cli(\pE)$. We call this the \emph{maximum clique cover algorithm}.


\begin{algorithm}\caption{Maximum Clique Cover Algorithm $\cliA$}\label{alg:clique-cover}
\begin{algorithmic}[1]
\State Input: $\pG = ([n],\pE)$
\State  $\cli(\pE)\leftarrow \emptyset$
\For {all size $d$ subsets of $[n]$}
    \State If $\pE$ has a clique on the subset, add the hyperedge on the subset to $\cli(\pE)$
\EndFor
\State Output $\cli(\pE)$
\end{algorithmic}
\end{algorithm}

\begin{remark}
    Since we are enumerating all size-$d$ subsets, the algorithm has time complexity $n^d$. It may be possible to improve this runtime by taking advantage of sparsity of the graph, using ideas in \cite{boix2021average}. 
\end{remark}

For which parameters does this algorithm work? 
From the definition, $\cliA$ fails if and only if there exists a clique in $\pG$ that is not a hyperedge of $\rhG$. 
If a $d$-clique $\he$ in $\pG$ is not a hyperedge of $\rhG$, every edge in the clique is included in some other hyperedge $\he'\in \rhG$. By carefully examining the possible ways of inclusion for all edges, we can obtain a tight bound on the probability of the event, yielding the following threshold.

\begin{theorem}\label{thm:naive}
    $\cliA$ exactly recovers $\rhG$ when $\delta< \frac{d-3}{d}$ and has $\Omega_n(1)$ probability of failure when $\delta\ge \frac{d-3}{d}$.
\end{theorem}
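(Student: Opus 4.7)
The plan is to define $N=\sum_{S\in\binom{[n]}{d}}\ind{S\text{ is a $d$-clique in }\pG\text{ and }S\notin\rhG}$, so that $\cliA$ succeeds exactly when $N=0$. The basic reduction is: $S$ contributes to $N$ iff $S\notin\hE_{\rhG}$ and there exist hyperedges $h_1,\ldots,h_k\in\hE_{\rhG}\setminus\{S\}$ whose intersections $T_i=h_i\cap S$ (with $|T_i|\in[2,d-1]$, since $|h_i|=d$ and $h_i\neq S$) cover $\binom{S}{2}$ as cliques.

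For the achievability when $\delta<(d-3)/d$, I will union-bound $\E[N]$ over \emph{minimal} clique-cover types. For a fixed type $(t_1,\ldots,t_k)$ with $s=\sum t_i$, the number of ordered tuples $(S,h_1,\ldots,h_k)$ realizing this type is $O(n^{d+kd-s})$ (choice of $S$ and the external vertices of each $h_i$), and each realization lies in $\rhG$ with probability $p^k$. With $p=n^{-d+1+\delta}$ this gives $\E[N]\leq C\sum_{\text{types}}n^{d-s+k(1+\delta)}$, where the sum ranges over the finite (for fixed $d$) family of type-isomorphism classes. The question thus reduces to the combinatorial optimization
\[
\max\bigl\{\,d-s+k(1+\delta)\,:\,(T_1,\ldots,T_k)\text{ minimally covers }K_d,\ |T_i|\in[2,d-1]\,\bigr\}.
\]
I claim the maximum equals $-d+3+d\delta$ and is attained by the cover consisting of a single $(d-1)$-clique on $S\setminus\{v\}$ (for some $v\in S$) plus the $d-1$ edges $\{v,u\}$ through the remaining vertex $v$, which has $k=d$ and $s=3(d-1)$. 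Given this, $\delta<(d-3)/d$ makes the maximum strictly negative, so $\E[N]\to 0$ and Markov finishes.

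For the impossibility when $\delta\geq(d-3)/d$, I will restrict to configurations of the canonical witness type and show they are already abundant. Let $M$ count tuples $(S,v,y_0,Y_1,\ldots,Y_{d-1})$ with $S\in\binom{[n]}{d}$, $v\in S$, $y_0\in[n]\setminus S$, and each $Y_i\in\binom{[n]\setminus S}{d-2}$, such that $S\notin\rhG$ and the $d$ witness hyperedges $(S\setminus\{v\})\cup\{y_0\}$ and $\{v,u_i\}\cup Y_i$ (as $u_i$ ranges over $S\setminus\{v\}$) all lie in $\rhG$. Any $M\geq 1$ implies $N\geq 1$ and hence failure of $\cliA$. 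A direct count gives $\E[M]=\Theta(n^{d+1+(d-1)(d-2)}p^d)=\Theta(n^{-d+3+d\delta})$, which is $\Omega(1)$ at the threshold. A standard second-moment calculation, splitting pairs of tuples by the pattern of shared hyperedges and of shared vertices (internal and external), should yield $\var(M)=O(\E[M]^2+\E[M])$; Paley--Zygmund then gives $\Pr(M\geq 1)=\Omega(1)$.

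The main obstacle is the combinatorial optimization in the achievability step. The cover constraint is not locally decomposable, and the LP relaxation has a nontrivial integrality gap (already for $d=4$, fractional weight $1/2$ on each of the four triangles of $K_4$ beats every integer cover), so a purely linear-algebraic argument will not suffice. A natural approach is an exchange argument — replacing substructures in an arbitrary cover by the canonical one and showing the objective weakly decreases for all $\delta\geq 0$ — combined with the per-vertex inequality $\sum_{i:v\in T_i}(t_i-1)\geq d-1$ and the bound $t_i\leq d-1$. Because for each fixed $d$ the number of isomorphism classes of minimal clique covers of $K_d$ is finite, the claim can also be settled by direct case analysis.
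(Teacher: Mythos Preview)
Your overall plan matches the paper's: for achievability you union-bound over clique-cover types $(T_1,\ldots,T_k)$ of $K_d$, arriving at exactly the optimization the paper calls $g_0(\delta)=\min_{I}\sum_{i\in I}(|S_i|-1-\delta)$ (your $\max\{d-s+k(1+\delta)\}$ equals $d-g_0(\delta)$); for impossibility you use the same witness hypergraph (one hyperedge on $S\setminus\{v\}$ plus one extra vertex, together with $d-1$ pendant hyperedges through $v$) and a second-moment argument.

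The genuine gap is that you do not solve the optimization. You correctly identify the extremal configuration and flag this step as ``the main obstacle,'' but none of your proposed attacks are carried out, and they do not obviously succeed. Summing your per-vertex inequality over $v$ only recovers the edge-count constraint $\sum_i t_i(t_i-1)\ge d(d-1)$, which alone does not pin down the minimum; your exchange argument is unspecified; and ``direct case analysis for each fixed $d$'' is not a proof for general $d$. The paper proves $g_0((d-3)/d)=d$ by splitting on the number $k$ of sets in the cover. For $k\ge d$ it relaxes the sizes $t_i$ to reals under the quadratic constraint $\sum_i t_i(t_i-1)\ge d(d-1)$, substitutes $y_i=t_i(t_i-1)/2$, and uses concavity of $\sum_i\sqrt{1+8y_i}$ to drive the minimizer to a vertex of the polyhedron $\{y_i\ge 1,\ \sum y_i\ge\binom{d}{2}\}$, then checks the two extreme values of $k$. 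For $k\le d$ it runs an induction on $d$: it picks a vertex $v$ satisfying two structural conditions (not opposite a $(d-1)$-set, and in at most one size-$2$ set), deletes $v$ from every set to obtain a cover of $K_{d-1}$, and shows the objective drops by at least $1$; a separate combinatorial argument shows such a $v$ exists (or else the cover is already suboptimal). Neither half is routine, and your proposal does not anticipate either mechanism.

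A minor secondary gap: for impossibility you assert $\var(M)=O((\E M)^2+\E M)$ without verification. The paper handles this cleanly by checking that every sub-hypergraph $K$ of the witness satisfies $e_K/v_K\le e_{K_f}/v_{K_f}$ and then invoking the standard second-moment bound; this is short but does require a brief case check over the subgraphs.
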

This implies the positive recovery  result in Theorem~\ref{thm:main-large-d} for $d\geq 6$, which we believe to be suboptimal. The proof of Theorem~\ref{thm:naive} is in Appendix~\ref{sec:naive}. 

\subsection{Greedy Algorithm}
Another natural algorithm starts with the maximum clique cover algorithm and then greedily deletes redundant hyperedges from the clique graph.

\begin{algorithm}\caption{Greedy Algorithm}\label{alg:greedy}
\begin{algorithmic}[1]
\State Input: $\pG = ([n],\pE)$
\State Find the $d$-clique hypergraph $H_0\leftarrow\cli(\pE)$
\While{$\exists h\in H_0$ that $H_0\backslash h\in \proj^{-1}(\hE_\rhG)$}
    \State $H_0\leftarrow H_0\backslash h$
\EndWhile
\State Output $H_0$
\end{algorithmic}
\end{algorithm}



Heuristically this algorithm ought to work better than the maximum clique cover algorithm, because it yields a graph with higher posterior probability. We leave it as an open question to determine under which parameter regime this algorithm succeeds. 

\subsection{Information-Theoretically Optimal Algorithm: MAP}


Although fully determining the landscape of exact recovery is non-trivial, the optimal algorithm for the task is in fact not hard to describe. 
Given the projected graph $\pG = \proj(\rhG)$, the error probability upon outputting $\cA(\pG)$ is simply the complement of the probability that our guess was the true hypergraph,
\[
1-p_{\rhG|\pG}(\cA(\pG)|\pG) \,.
\]
Here $p_{\rhG|\pG}$ is the conditional probability mass function of the random hypergraph given the projected graph.
Therefore, if we do not worry about time complexity, the information theoretically optimal algorithm should simply output a hypergraph with maximum posterior likelihood, i.e., following the maximum \emph{a posteriori} (MAP) rule. As discussed next, the MAP rule can be easily characterized.

\paragraph{MAP Outputs a Minimum Preimage.}

Since the posterior distribution is
\[
p_{\rhG|\pG}(H|\pG) =\frac{\ind{\proj(\hE_H) = \pE}p_{\rhG}(\hE_H) }{p_\pG(\pE)}\propto  \ind{\proj(\hE_H) = \pE} \b(\frac{p}{1-p}\b)^{|\hE_H|}\,,
\]
the optimal algorithm $\cA^*$ should output one of the hypergraphs that project to $\pG$ with the smallest number of hyperedges, i.e., 
\[
\cA^*(\pG) \in \argmin_{\hG:\hE_{\hG}\in \prim(\pE)} |\hE_{\hG}|\,.
\]We say a hypergraph $\hG$ is 
a \emph{minimum preimage} if $\hG\in \argmin_{\hE_\hG\in \prim(E)} |\hE_{\hG}|$.

Since ties can be broken arbitrarily, we always assume that $\cA^*$ chooses a specific minimum preimage (for instance based on lexicographical order on the hyperedges) instead of choosing a random one.



\subsection{MAP is Efficient for Sparse Graphs}

In general, the MAP algorithm involves solving for the minimum way to cover a graph with a hypergraph, which can be intractable. In this section, we will provide an efficient algorithm for computing the MAP rule if the hypergraph is sparse enough, of course also making use of the fact that the hypergraph is random.
\begin{theorem}\label{thm:sparse-algo}
    When $\delta<\frac{d-1}{d+1}$, the optimal algorithm $\cA^*$ is with high probability efficiently computable (i.e., has runtime polynomial in $n$).
\end{theorem}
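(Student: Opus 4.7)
The plan is to show that when $\delta<\frac{d-1}{d+1}$ the projected graph $\pG$ decomposes into small ``blocks'' at the level of overlapping $d$-cliques, and on each block the minimum preimage can be found by brute-force enumeration in constant time.

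First, define two $d$-cliques of $\pG$ to be \emph{adjacent} if they share at least two vertices (so that they share an edge in $\pE$), and call a connected component of this clique-adjacency graph a \emph{block}. Any preimage $\hG\in\prim(\pE)$ consists of hyperedges each of which is a $d$-clique of $\pG$, and cliques in different blocks cover disjoint edges of $\pG$. Hence the minimum clique cover of $\pG$ decomposes as a disjoint union of minimum clique covers, one per block, and computing the MAP output reduces to a minimum clique cover problem on each block independently.

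Second, the main structural claim is that with high probability every block contains at most $c(d)$ hyperedges of $\rhG$ and at most $c(d)$ candidate $d$-cliques of $\pG$, for some constant $c(d)$. The binding local obstruction should be a chain $C_1,\ldots,C_k$ of $d$-cliques of $\pG$ with each consecutive pair sharing at least two vertices, where each $C_i$ arises from a distinct hyperedge of $\rhG$. A direct count gives expected number $\Theta\b(n^{d+(d-2)(k-1)}p^k\b)=\Theta(n^{2-k+k\delta})$, which is $o(1)$ as soon as $k\ge d+1$ under $\delta<\frac{d-1}{d+1}$; indeed the identity $(d-1)/(d+1)=1-2/(d+1)$ is exactly the chain threshold for $k=d+1$. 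One must additionally rule out \emph{accidental} $d$-cliques of $\pG$ (formed by the overlap of several hyperedges rather than by a single hyperedge) that could fuse otherwise-separate chains into one oversized block, but any such configuration requires strictly more vertex coincidences and is handled by the same first-moment principle via a union bound over a finite list of local hypergraph templates.

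Third, once every block contains at most $c(d)$ hyperedges and $c(d)$ candidate $d$-cliques, enumerate all subsets of the cliques inside each block and return a smallest subset covering every edge. This takes $O(1)$ time per block, while listing all $d$-cliques of $\pG$ and grouping them into blocks is polynomial in $n$ for fixed $d$, so the overall algorithm runs in polynomial time. The main obstacle is the second step: accidental cliques make the block structure sensitive to multi-hyperedge interactions, so one must enumerate and bound every way a bounded number of hyperedges can produce a block of forbidden size. I expect chains of $d+1$ hyperedges to be the unique binding structure, but verifying that every other candidate local arrangement---branched chains, multiply-overlapping triples, and accidental cliques stitching chains together---has strictly smaller expected count will require a careful (but finite) case analysis driven by the same exponent bookkeeping $n^{(\text{vertices})}p^{(\text{hyperedges})}$.
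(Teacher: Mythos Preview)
Your high-level plan---decompose $\pG$ into $2$-connected components of the clique hypergraph $\cliG$ (your ``blocks''), show the minimum preimage factors across them, and brute-force each block---matches the paper's approach exactly (Lemma~\ref{lem:union-min-preimage} for the decomposition, Lemma~\ref{lem:component-constant-size} for constant block size, then Algorithm~\ref{alg:map}).

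The gap is where you locate it, but it is not a side case: handling accidental cliques is the entire technical content of the constant-size claim. Your chain count $n^{2-k(1-\delta)}$ bounds $2$-connected sets of \emph{hyperedges of $\rhG$}, but hyperedges in the same block of $\cliG$ can be bridged solely through accidental cliques and need not be $2$-connected as hyperedges at all. The paper handles this not by enumerating a finite template list but by defining a $\grow$ operation (Section~\ref{sec:growth-components}): to extend $K$ by a $2$-neighboring clique $h$ (accidental or not), one adds hyperedges $\{h_i\}_{i\in I}$ with $h_i\cap h=S_i$ that together cover $\proj(h)\setminus\proj(K)$. The crux (Lemmas~\ref{lem:exp-dec} and~\ref{lem:cover-bound}) is that for \emph{every} such step the exponent of the expected count drops by at least $\tfrac{d-1}{d+1}-\delta$; proving this amounts to minimizing $\sum_i(|S_i|-1-\delta)$ over all clique covers of a $d$-clique minus a $k$-clique, a genuine optimization the paper solves via a concavity/relaxation argument rather than a case check.

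Your expectation that chains are the unique binding structure is also wrong, and this is why the case analysis is harder than you anticipate. The single-hyperedge (chain) growth step has exponent change $-(1-\delta)$, which at $\delta=\tfrac{d-1}{d+1}$ is $-\tfrac{2}{d+1}$, strictly negative. The tight case in Lemma~\ref{lem:cover-bound} is instead an accidental clique whose $\binom{d}{2}-1$ uncovered edges are each covered by a separate hyperedge sharing only that edge with $h$; this trades \emph{fewer} vertex coincidences for \emph{more} hyperedges (so ``strictly more vertex coincidences'' is false) and has exponent change exactly $0$ at the threshold. Iterating such steps produces blocks whose size scales like $\big(\tfrac{d-1}{d+1}-\delta\big)^{-1}$, not the constant $d+1$ your chain bound suggests, so no template list of bounded size independent of $\delta$ can suffice.
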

The underlying intuition is that when the hypergraph is sparse enough, we can partition the projected graph into constant-size components, where the minimum preimage of each component can be solved for independently of the other components. However, a naive definition of connected component is useless, as $p$ is far above the connectivity threshold. We require a definition of component better suited to our goal of finding the minimum preimage.

\paragraph{2-Neighborhood and 2-Connectivity.}

Define the \emph{2-neighbor} of a hyperedge $\he$ in a hypergraph $\hG$ to be all hyperedges $\he'$ with $|\he\cap \he'|\ge 2$, denoted by 
\[
\nei{\hG}(\he) = \{\he':|\he\cap \he'|\ge 2\}\,.
\]
Let $\mathcal{G}_\hG$ be a graph whose node set is the set of hyperedges in $\hG$ and the neighborhood structure is defined by 2-neighbors. We say that a set of hyperedges in $\hG$ is \emph{2-connected} if they are connected in $\mathcal{G}_\hG$. A \emph{2-connected component} of the hypergraph $\hG$ is a set of hyperedges that form a connected component in $\mathcal{G}_\hG$.\footnote{Here the definition is for hypergraphs and is different from the usual definition of 2-connectivity in a simple graph. }
See an illustration of 2-connectivity in Figure~\ref{fig:2-connectivity}. 
We will never need to refer to the graph $\mathcal{G}_\hG$ and instead work directly with 2-connected sets of hyperedges in $H$. 

\begin{figure}
     \centering
     \begin{subfigure}[t]{0.48\textwidth}
         \centering
         \begin{tikzpicture}[every node/.style={fill,circle,inner sep=1.5pt},yscale=0.9,xscale=0.8]

  \coordinate (P1) at (0,0);
  \coordinate (P2) at (2,0);
  \coordinate (P3) at (1,1.732); 
  \coordinate (P4) at (3,1.732);

  \filldraw[orange, fill=orange!20] (P1) -- (P2) -- (P3) -- cycle; 
  \filldraw[orange, fill=orange!20] (P2) -- (P3) -- (P4) -- cycle; 
  \coordinate (P5) at (4,0); 
  \filldraw[orange, fill=orange!20] (P2) -- (P4) -- (P5) -- cycle; 
  \coordinate (P6) at (5,1.732); 
  \filldraw[orange, fill=orange!20] (P4) -- (P5) -- (P6) -- cycle; 
  \coordinate (P7) at (1,-1.732);
  \filldraw[orange, fill=orange!20] (P1) -- (P2) --(P7) -- cycle;

  \coordinate (R1) at (5,-1.732);
  \coordinate (R2) at (6,0); 
  \coordinate (R3) at (7,-1.732);

  \filldraw[red, fill=red!20] (P5) -- (R1) -- (R2) -- cycle; 
  \filldraw[red, fill=red!20] (R2) -- (R1) -- (R3) -- cycle; 

  \coordinate (R4) at (8,0);
  \filldraw[red, fill=red!20] (R2) -- (R3) -- (R4) -- cycle;

  \coordinate (C1) at (8,0); 
  \coordinate (C2) at (9,1);
  \coordinate (C3) at (9,-1);
  \coordinate (C4) at (10,0);
  \filldraw[blue, fill=blue!20] (R4) -- (C1) -- (C2) -- (C4) -- (C3) -- (C1); 
  \draw[blue] (C2) -- (C3); 
  \draw[blue] (C4) -- (C1); 

  
\end{tikzpicture}
         \caption{An example of hypergraph $H$. Triangles filled with colors represent hyperedges in $\hG$. Two hyperedges are 2-connected if they share two nodes. Different 2-connected components are marked in different colors. }
     \end{subfigure}
     \hfill
     \begin{subfigure}[t]{0.48\textwidth}
         \centering
\begin{tikzpicture}[,yscale=0.9,xscale=0.8]
  \coordinate (P1) at (0,0);
  \coordinate (P2) at (2,0);
  \coordinate (P3) at (1,1.732); 
  \coordinate (P4) at (3,1.732);
  \coordinate (P5) at (4,0);
  \coordinate (P6) at (5,1.732);
  \coordinate (P7) at (1,-1.732);

  \coordinate (R1) at (5,-1.732);
  \coordinate (R2) at (6,0); 
  \coordinate (R3) at (7,-1.732);

  \coordinate (C1) at (8,0); 
  \coordinate (C2) at (9,1);
  \coordinate (C3) at (9,-1);
  \coordinate (C4) at (10,0);
  \coordinate (C5) at (9,0);

  \node (b1) at (barycentric cs:P1=1,P2=1,P3=1) [circle, fill=orange, inner sep=1.5pt] {};
  \node (b2) at (barycentric cs:P2=1,P3=1,P4=1) [circle, fill=orange, inner sep=1.5pt] {};
  \node (b3) at (barycentric cs:P2=1,P4=1,P5=1) [circle, fill=orange, inner sep=1.5pt] {};
  \node (b4) at (barycentric cs:P4=1,P5=1,P6=1) [circle, fill=orange, inner sep=1.5pt] {};
  \node (b5) at (barycentric cs:P1=1,P2=1,P7=1) [circle, fill=orange, inner sep=1.5pt] {};

  \draw[orange] (b1) -- (b2);
  \draw[orange] (b2) -- (b3);
  \draw[orange] (b3) -- (b4);
  \draw[orange] (b1) -- (b5);

  \node (br1) at (barycentric cs:P5=1,R1=1,R2=1) [circle, fill=red, inner sep=1.5pt] {};
  \node (br2) at (barycentric cs:R2=1,R1=1,R3=1) [circle, fill=red, inner sep=1.5pt] {};
  \node (br3) at (barycentric cs:R2=1,R3=1,C1=1) [circle, fill=red, inner sep=1.5pt] {};
  \draw[red] (br1) -- (br2) -- (br3);

    \node (bc1) at (barycentric cs:C1=1,C2=1,C5=1) [circle, fill=blue, inner sep=1.5pt] {};
  \node (bc2) at (barycentric cs:C2=1,C4=1,C5=1) [circle, fill=blue, inner sep=1.5pt] {};
  \node (bc3) at (barycentric cs:C3=1,C4=1,C5=1) [circle, fill=blue, inner sep=1.5pt] {};
  \node (bc4) at (barycentric cs:C3=1,C1=1,C5=1) [circle, fill=blue, inner sep=1.5pt] {};
  \draw[blue] (bc1) -- (bc2) -- (bc3) -- (bc4) -- (bc1);
\end{tikzpicture}
         \caption{The corresponding graph $\cG_H$. Each node in $\cG_H$ corresponds to a hyperedge in $H$. Two nodes are connected if their corresponding hyperedges share two nodes.}
         \label{fig:three sin x}
     \end{subfigure}
\caption{An example of  2-connectivity and 2-connected components when $d=3$.}
\label{fig:2-connectivity}
\end{figure}
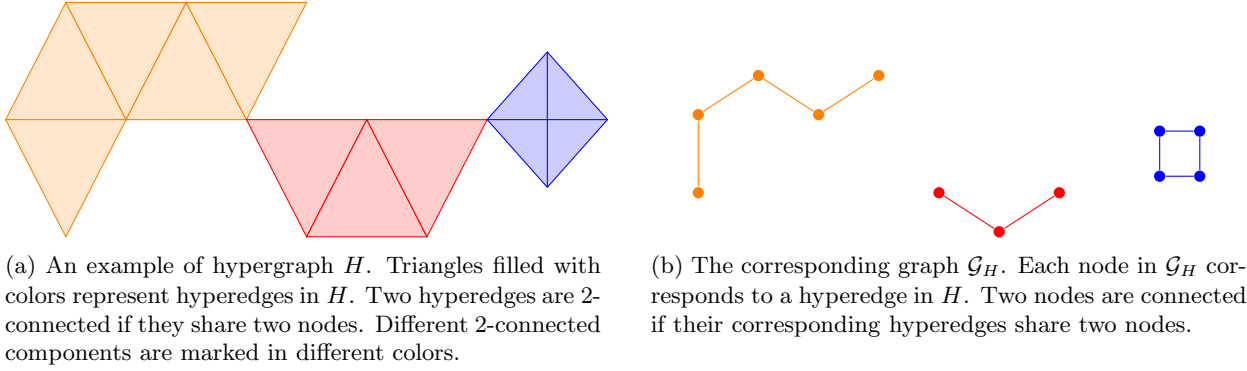

\subsubsection{Decomposition of MAP Across 2-Connected Components}

The following lemma implies that the task of finding the minimum preimage \emph{decomposes} and can be carried out individually in each of the 2-connected components of hyperedges.

Recall that the clique hypergraph $\cliG$ (defined at the start of Section~\ref{sec:maxclique}) of a graph $G=(V,E)$ has hyperedge set $\cli(E) = \b\{\he\in \textstyle{\binom{[n]}{d}} : (i,j)\in E
\text{ for every } \{i,j\}\subset h
\b\}$.

\begin{lemma}\label{lem:union-min-preimage}
Let $C_1,\cdots, C_m$ be all 2-connected components (i.e., 2-connected subsets of hyperedges) of the clique hypergraph $\cliG$ of the projected graph $G_p=([n],E_p)$. We have
\[
\prim(\pG) = \{\cup_{i=1}^m\hG_i: \hG_i\in \prim(\proj(C_i))\}.
\]
In words, any preimage of $\pG$ is given by a union of hypergraphs, each from a preimage of the projection of a 2-connected component of $\cliG$.

\end{lemma}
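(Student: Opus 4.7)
The plan is to prove the two inclusions separately, with the key observation being that two $d$-cliques sharing an edge in $G_p$ automatically share at least two vertices and are therefore 2-neighbors in $\cliG$. This means that all $d$-cliques containing a given edge $e\in E_p$ lie in the same 2-connected component of $\cliG$, and this is the structural fact that makes the decomposition work.

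For the inclusion $\supseteq$, I would take $H_i\in \prim(\proj(C_i))$ for each $i$ and verify directly that $\cup_i H_i$ is a preimage of $\pG$. Since projection commutes with union,
\[
\proj\Big(\bigcup_i H_i\Big) = \bigcup_i \proj(H_i) = \bigcup_i \proj(C_i) = \proj\Big(\bigcup_i C_i\Big) = \proj(\cliG).
\]
So I just need $\proj(\cliG)=E_p$. The inclusion $\proj(\cliG)\subseteq E_p$ is clear since every $d$-clique lies in $E_p$; the reverse holds because $E_p$ admits a hypergraph preimage (namely $\rhG$), so every edge of $E_p$ is contained in some hyperedge, which is a $d$-clique in $G_p$ and hence appears in $\cliG$.

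For the inclusion $\subseteq$, take any $H\in \prim(\pG)$. Every hyperedge $\he\in H$ satisfies $\proj(\he)\subseteq E_p$, so $\he$ is a $d$-clique of $G_p$, i.e., $\he\in \cli(E_p)=\bigsqcup_i C_i$. Setting $H_i \defeq H\cap C_i$, we get $H=\bigsqcup_i H_i$, and it remains to check $H_i\in \prim(\proj(C_i))$. The inclusion $\proj(H_i)\subseteq \proj(C_i)$ is immediate from $H_i\subseteq C_i$. For the reverse, fix $e\in \proj(C_i)$, so there is some $\he\in C_i$ with $e\subseteq \he$. In particular $e\in E_p$, so since $H$ covers $E_p$, there exists $\he'\in H$ with $e\subseteq \he'$. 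Then $|\he\cap \he'|\geq 2$ (they share the two endpoints of $e$), so $\he'$ is a 2-neighbor of $\he$ and therefore belongs to the same 2-connected component $C_i$. Hence $\he'\in H\cap C_i=H_i$ and $e\in \proj(H_i)$, which finishes the argument.

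I don't anticipate a serious obstacle here: the lemma is really a structural reformulation of the definition of 2-connectivity, and the only point that requires even a moment's thought is the direction $\proj(C_i)\subseteq \proj(H_i)$, which is exactly where the definition of 2-neighbor (sharing $\geq 2$ vertices, equivalently sharing a common edge) is used. Note that the lemma is about arbitrary preimages, not minimum preimages, so no optimality argument is needed; the corresponding statement for minimum preimages will follow as a corollary by minimizing cardinality component-by-component.
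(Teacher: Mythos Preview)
Your proposal is correct and follows essentially the same approach as the paper's proof: both hinge on the observation that any two $d$-cliques in $\cliG$ sharing an edge of $E_p$ are 2-neighbors, so all hyperedges of $\cliG$ containing a given edge lie in one component, and both establish the two inclusions by intersecting an arbitrary preimage with the $C_i$ and checking $\proj(H\cap C_i)=\proj(C_i)$. The paper additionally records that the $\proj(C_i)$ are pairwise disjoint (so they partition $E_p$), but this is not needed for the set equality as stated, and your argument is complete without it.
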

The proof of the lemma is given in Appendix~\ref{sec:union-min-preimage}. 

What makes this decomposition so useful is that with high probability each of the components is of constant size. This will allow us to carry out a brute-force search on each component. 

\begin{restatable}{lemma}{component}\label{lem:component-constant-size}
     For any fixed $\delta<\frac{d-1}{d+1}
     $, with high probability, all 2-connected components of $\cliG$ have size at most $1+2^{d+1}/(\frac{d-1}{d+1}-\delta)=O_n(1)$.
\end{restatable}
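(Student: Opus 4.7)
I would prove this by computing the expected number of 2-connected subsets of $k := \lceil 2 + 2^{d+1}/\alpha \rceil$ cliques in $\cliG$, where $\alpha := (d-1)/(d+1) - \delta > 0$, and showing that it is $o_n(1)$. By Markov's inequality, this implies that no such subset exists with high probability; since any 2-connected component of size $\geq k$ contains a 2-connected subset of size $k$, the claim follows.

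For a labeled 2-connected set $C = \{c_1,\ldots,c_k\}$ of $d$-cliques on $v \leq d+(k-1)(d-2)$ vertices, the event $C \subseteq \cliG$ requires every pair of vertices lying in some $c_i$ to be contained in a hyperedge of $\rhG$. Calling any $W \subseteq \rhG$ with this covering property a \emph{witness} of $C$, a union bound over minimal witnesses gives $\Pr(C \subseteq \cliG) \leq \sum_W p^{|W|}$. Classifying each witness by $m = |W|$ and $t = |V(W) \setminus V(C)|$, the contribution of a type is at most $n^t p^m$ times a combinatorial factor. The two extremes are the \emph{cheap} witness $W = C$ (with $m=k$, $t=0$) and the \emph{enemy} witness, which uses a distinct hyperedge for each of the $M$ distinct pairs of $C$ with $d-2$ distinct extra vertices per hyperedge ($m=M$, $t=M(d-2)$).

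For the worst-case chain shape (with maximal $v = k(d-2)+2$ and $M = k(\binom{d}{2}-1)+1$), the enemy contribution summed over labelings of $C$ is $n^v \cdot n^{M(d-2)} p^M = n^{v+M(\delta-1)}$. Using the identity $\binom{d}{2}-1 = (d-2)(d+1)/2$, the exponent simplifies to $-k(d-2)(d+1)\alpha/2 + O_d(1)$, exactly where the threshold $\delta < (d-1)/(d+1)$ emerges. Summing over the at most $(2^{d+1})^{k-1}$ shapes of a 2-connected $k$-set (each new clique shares $\geq 2$ of its $d$ vertices with the existing set in fewer than $\sum_{j=2}^d \binom{d}{j} < 2^{d+1}$ ways) and bounding intermediate witness types by the extremes, the total expected count is at most $(2^{d+1})^{k-1} \cdot n^{-k(d-2)(d+1)\alpha/2+O_d(1)}$, which is $o_n(1)$ for $k \geq 2 + 2^{d+1}/\alpha$.

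The main obstacle is the intermediate witness analysis: showing that any witness combining some cliques from $C$ (kept as hyperedges in $\rhG$) with enemy-style covers for the rest does not give a larger contribution than the cheap or enemy extreme. A monotonicity argument should work: each ``replacement'' of a clique in the cheap witness by a cover of its non-shared pairs multiplies the contribution by a factor bounded by $1$ when $\delta < (d-1)/(d+1)$, reducing the bound to the two extremal witness types.
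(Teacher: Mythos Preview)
Your overall plan---first moment via Markov, union bound over minimal witnesses $W\subset\rhG$ for a $2$-connected $k$-tuple $C$ of cliques---is the same as the paper's, and the algebra you do for the enemy witness (using $\binom{d}{2}-1=(d-2)(d+1)/2$ to get the exponent $-k(d-2)(d+1)\alpha/2+O_d(1)$) is exactly where the threshold $\delta<\tfrac{d-1}{d+1}$ shows up.

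The gap is the intermediate-witness step, which is the entire difficulty and does not work as you describe. First, your classification of witnesses as ``keep a subset of the $c_i$ as hyperedges, enemy-cover the remaining pairs'' is incomplete: a witness hyperedge may meet $V(C)$ in any subset of size $\ge 2$ and cover several pairs at once without being one of the $c_i$. Second, the monotonicity you propose (each replacement multiplies by $\le 1$) is simply false. For $d=3$, a chain $C=(c_1,c_2,c_3)$ on $5$ vertices with $7$ edges, and $\delta=0.3$: summed over labelings of $C$, the cheap witness gives $n^{5}p^{3}=n^{3\delta-1}=n^{-0.1}$, the enemy gives $n^{5}\cdot n^{7}p^{7}=n^{7\delta-2}=n^{0.1}$, but ``keep $c_1,c_2$ and enemy-cover the two free edges of $c_3$'' gives $n^{5}\cdot n^{2}p^{4}=n^{4\delta-1}=n^{0.2}$, and ``keep $c_1$ only'' gives $n^{5}\cdot n^{4}p^{5}=n^{5\delta-1}=n^{0.5}$. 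So intermediates strictly dominate both extremes, and you cannot reduce to them.

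The paper avoids this by organizing the sum differently. Rather than fixing $C$ and then summing over witnesses, it builds the pair $(C,W)$ one clique at a time via the $\grow$ operation: start from a single hyperedge, and at each step add one new clique $h\in N(\shG)$ to the $2$-connected set \emph{together with} any choice of hyperedges covering $\proj(h)\setminus\proj(\shG)$ (intersecting $h$ in arbitrary subsets $S_i$). The key lemma (Lemma~\ref{lem:exp-dec}) shows that every such step multiplies the expected count by at most $O_n(n^{-\alpha})$, uniformly over how $h$ attaches (say $k$ shared vertices) and how its new edges are covered; this rests on the combinatorial optimization in Lemma~\ref{lem:cover-bound}, namely $\min_{2\le k\le d}\{g_k(\delta)+k-d\}\ge\alpha$. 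That per-step inequality is the rigorous replacement for your monotonicity, and proving it is where the real work is. Once you have it, $t=2/\alpha$ growth steps drive the expectation from $n^{1+\delta}$ down to $o_n(1)$, and since each step adds at most $2^d$ hyperedges you get the stated bound $1+2^{d+1}/\alpha$.
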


We will refer to this threshold, $\tcth$, as the \emph{\cth}. 

\subsubsection{MAP Algorithm}
We have the following efficient algorithm that (with high probability) implements the MAP rule $\cA^*$:
\begin{algorithm}\caption{Maximum a Posteriori (MAP) $\cA^*$}\label{alg:map}
\begin{algorithmic}[1]
\State Input: $\pG = ([n],\pE)$
\State Calculate the clique graph $\cliG$ from $\pG$ by finding all size-$d$ cliques in $\pG$.
\State Enumerate over all pairs of vertices to determine 2-neighborhoods of all hyperedges in $\cliG$.
\State Find all 2-connected components of $\cliG$ using a depth-first search on all hyperedges in $\cliG$.
\State Search over all preimages in each 2-connected components of $\cliG$ and find one with minimum size. 
\State Output the union of the minimum preimages of the 2-connected components in $\cliG$.
\end{algorithmic}
\end{algorithm}



\begin{proof}[Proof of Theorem~\ref{thm:sparse-algo}]
From the previous section, we know that MAP returns an arbitrary minimum preimage of the projected graph $\pG$. By Lemma~\ref{lem:union-min-preimage}, a minimum preimage of $\pG$ is given by the union of minimum preimages of all 2-connected components in $\cliG$. So Algorithnm~\ref{alg:map} indeed implements the MAP rule.

We now analyze the running time of the algorithm. Steps 2 and 4 take time at most $O_n(n^d)$. Step 3 takes time at most $O_n(n^2)$. 
Step 5 takes time at most $n^d2^k$, where $k$ is the size of the largest 2-connected component in $\cliG$. By Lemma~\ref{lem:component-constant-size}, $k=O_n(1)$ with high probability, so overall the algorithm finishes in time $O_n(n^d)$ with high probability.
\end{proof}

\subsubsection{2-Connected Components have Constant Size for Sparse Hypergraphs}\label{sec:const-size}
In this section we give a proof sketch of Lemma~\ref{lem:component-constant-size} which states that $\cliG$ can be partitioned into small 2-connected components for $\delta$ below $\frac{d-1}{d+1}$. 
We give the full proof in Section~\ref{sec:growth-components}.

The lemma is proved by carefully examining how a set of 2-connected edges in $\cliG$ can grow bigger. This is analogous to (but  more delicate than) the analysis of components in subcritical Erd\H os-R\'enyi graphs.  We will show that any 2-connected component can be decomposed into a series of ``growth'' steps starting from a single hyperedge. Each growth operation has a ``probabilistic cost" because it is a moderately low probability event, which reduces the number of such components. Accounting for the possible growth patterns within 2-connected components in $\cliG$ shows that with high probability no large components appear. 

Now let us consider the possible ways to grow a sub-hypergraph $\shG\subset \rhG$ via local exploration, and try to understand why the probability of having the graph in $\cliG$ decreases with the growth. Suppose $\shG$ is a set of hyperedges, and $\cli(\proj(\shG))$ is 2-connected. For $\shG$ to get larger, it must include one of its 2-neighbors $\he\in \cliG$. How did $\he$ appear in $\cliG$? The somewhat delicate aspect of this is that $\he$ may not be in $\rhG$: $\he$ might exist in $\cliG$ because all edges in the clique $\proj(\he)$ are covered by \emph{some other} hyperedges $\hE\subset \rhG$. So to grow $\shG$, one option is to include all of $\hE$. 
Because each hyperedge is included with fairly small probability, this reduces the expected number of components of the given form, while the number of options in selecting $\hE$ increases with the size of $\hE$. The following lemma gives
the expected number of appearances of a given sub-hypergraph $\shG$ in terms of the number of nodes and the number of hyperedges in the sub-hypergraph.  



\begin{lemma}\label{lem:number-appearance} Let $X_{\shG}$ be the number of appearances of a sub-hypergraph $\shG$ in $\rhG$. Denote by $v_\shG$ and $e_\shG$ the number of nodes and hyperedges in $\shG$.
    For any hypergraph $\shG$, 
    \[
    \E X_{\shG} = \Theta_n(n^{v_\shG}p^{e_\shG})\,.
    \]
\end{lemma}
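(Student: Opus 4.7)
The plan is a direct first-moment calculation via linearity of expectation. I would write $X_\shG$ as a sum of indicator variables, one for each potential copy of $\shG$ inside $\rhG$. Concretely, let $\Phi$ denote the set of injections $\phi : V(\shG) \to [n]$, and for each $\phi \in \Phi$ let $Y_\phi$ be the indicator that $\phi(\he) \in \hE_\rhG$ for every $\he \in \hE_\shG$. Then $X_\shG$ equals either $\sum_\phi Y_\phi$ (if labeled copies are counted) or $\sum_\phi Y_\phi / |\aut(\shG)|$ (if unlabeled isomorphic copies are counted); the two conventions differ by a constant depending only on $\shG$ and so yield the same $\Theta_n$ rate.

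The two ingredients are a count and a probability. First, the number of injections from $V(\shG)$ into $[n]$ is $|\Phi| = n(n-1)\cdots(n-v_\shG+1) = \Theta_n(n^{v_\shG})$, where the implicit constant depends only on $v_\shG$ (which is fixed with $\shG$). Second, for each fixed $\phi \in \Phi$, the events $\{\phi(\he) \in \hE_\rhG\}$, indexed by the $e_\shG$ distinct hyperedges $\he \in \hE_\shG$, are independent Bernoulli$(p)$ events by the definition of $\rhG(n,d,p)$, so $\E Y_\phi = p^{e_\shG}$. Combining these via linearity of expectation gives
\[
\E X_\shG \;=\; |\Phi|\cdot p^{e_\shG} / |\aut(\shG)| \;=\; \Theta_n(n^{v_\shG}) \cdot p^{e_\shG} \;=\; \Theta_n(n^{v_\shG} p^{e_\shG}).
\]

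There is no substantive obstacle. The only points requiring brief care are: (i) fixing a convention for what ``appearance'' means, which affects only the $O(1)$ prefactor depending on $\shG$; and (ii) noting that the implicit constants in $\Theta_n$ depend on the fixed hypergraph $\shG$ but not on $n$, which is harmless in all applications of the lemma (where $\shG$ has bounded size and $n \to \infty$). In short, this lemma is the hypergraph analogue of the classical first-moment subgraph-count estimate for \ER\ graphs, and the proof is a one-line computation once the bookkeeping is set up.
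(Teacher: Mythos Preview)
Your proposal is correct and is essentially identical to the paper's argument: the paper (implicitly, within the proof of Lemma~\ref{lem:subgraph_threshold} in Appendix~\ref{sec:subgraph_threshold}) counts the copies as $t=\binom{n}{v_\shG}\frac{(v_\shG)!}{\aut(\shG)}=\Theta_n(n^{v_\shG})$ and multiplies by $p^{e_\shG}$, which is exactly your injection-count-times-independence computation up to the $\aut(\shG)$ normalization you already flagged. There is nothing further to add.
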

When we grow $\shG$, we increase both the number of nodes and the number of edges of the hypergraph. With more nodes, the expectation increases (more possible choices) and with more hyperedges, the expectation decreases. The trade-off is controlled by how we choose $\hE$ and the parameter $\delta$. When $\delta<\frac{d-1}{d+1}$, we will be able to show that no matter how $\hE$ is chosen, the expectation always decreases by a polynomial factor. Therefore, after a constant number of growth steps, the expectation becomes negligible.


\subsection{Ambiguous Graphs and Success Probability of MAP}
In this section, we will see that when $\delta $ is below the \cth\  $\tcth$, the success probability of MAP is fully determined by graphs with non-unique minimum preimages, which we call ambiguous graphs.

\begin{definition}\label{def:ambiguous}
    An \emph{ambiguous graph} is a graph with at least two minimum hypergraph preimages.
\end{definition}

As we will see, appearance or non-appearance of ambiguous graphs determines success of the MAP rule. 

\subsubsection{Impossibility Result via Existence 
 of Ambiguous Graphs}
 
In the previous section, we showed that MAP is w.h.p. efficient whenever $\delta<\frac{d-1}{d+1}$. However, even the optimal algorithm does not always succeed in this regime. Consider the graph depicted in Figure~\ref{fig:ambiguous-graph}. If $\pG$ has a copy of this graph as a component, then there are two minimum preimages with equal size, both with the same posterior probability. So no matter which one the MAP algorithm outputs, it must incur at least 1/2 probability of error. This is formalized in the following lemma.

\begin{figure}

   \begin{minipage}{0.48\textwidth}\centering
\begin{tikzpicture}[node/.style={minimum size = 1mm, inner sep=0pt}, scale=0.7]
    \coordinate (1) at (-1.73,0);
    \coordinate (2) at (0,1);
    \coordinate (3) at (0,-1);
    \coordinate (4) at (1.73,0);
    \coordinate (5) at (-2,2);
    \coordinate (6) at (-2,-2);
    \coordinate (7) at (2,-2);
    \coordinate (8) at (2,2);

    \fill[green!30] (1) -- (2) -- (3) -- cycle;
    \fill[green!30] (1) -- (2) -- (5) -- cycle;
    \fill[green!30] (1) -- (3) -- (6) -- cycle;
    \fill[green!30] (4) -- (2) -- (8) -- cycle;
    \fill[green!30] (4) -- (7) -- (3) -- cycle;

    \draw (1) -- (2) -- (3) -- cycle;
    \draw (2) -- (3) -- (4) -- cycle;
    \draw (1) -- (2) -- (5) -- cycle;
    \draw (1) -- (3) -- (6) -- cycle;
    \draw (2) -- (4) -- (8) -- cycle;
    \draw (3) -- (4) -- (7) -- cycle;

    \node at (1) [draw, circle, fill=black, scale=0.5,inner sep=2pt, label=left:1] {};
    \node at (2) [draw, circle, fill=black, scale=0.5,inner sep=2pt, label=above:2] {};
    \node at (3) [draw, circle, fill=black, scale=0.5,inner sep=2pt, label=below:3] {};
    \node at (4) [draw, circle, fill=black, scale=0.5,inner sep=2pt, label=right:4] {};
    \node at (5) [draw, circle, fill=black, scale=0.5,inner sep=2pt, label=above:5] {};
    \node at (6) [draw, circle, fill=black,scale=0.5, inner sep=2pt, label=below:6] {};
    \node at (7) [draw, circle, fill=black, scale=0.5,inner sep=2pt, label=below:7] {};
    \node at (8) [draw, circle, fill=black, scale=0.5,inner sep=2pt, label=above:8] {};

\end{tikzpicture}
\end{minipage}\hfill
   \begin{minipage}{0.48\textwidth}\centering

\begin{tikzpicture}[node/.style={minimum size = 0mm, inner sep=0pt}, scale=0.7]
    \coordinate (1) at (-1.73,0);
    \coordinate (2) at (0,1);
    \coordinate (3) at (0,-1);
    \coordinate (4) at (1.73,0);
    \coordinate (5) at (-2,2);
    \coordinate (6) at (-2,-2);
    \coordinate (7) at (2,-2);
    \coordinate (8) at (2,2);

    \fill[green!30] (4) -- (2) -- (3) -- cycle;
    \fill[green!30] (1) -- (2) -- (5) -- cycle;
    \fill[green!30] (1) -- (3) -- (6) -- cycle;
    \fill[green!30] (4) -- (2) -- (8) -- cycle;
    \fill[green!30] (4) -- (7) -- (3) -- cycle;

    \draw (1) -- (2) -- (3) -- cycle;
    \draw (2) -- (3) -- (4) -- cycle;
    \draw (1) -- (2) -- (5) -- cycle;
    \draw (1) -- (3) -- (6) -- cycle;
    \draw (2) -- (4) -- (8) -- cycle;
    \draw (3) -- (4) -- (7) -- cycle;

    \node at (1) [draw, circle, fill=black,scale=0.5, inner sep=2pt, label=left:1] {};
    \node at (2) [draw, circle, fill=black,scale=0.5, inner sep=2pt, label=above:2] {};
    \node at (3) [draw, circle, fill=black,scale=0.5, inner sep=2pt, label=below:3] {};
    \node at (4) [draw, circle, fill=black,scale=0.5, inner sep=2pt, label=right:4] {};
    \node at (5) [draw, circle, fill=black,scale=0.5, inner sep=2pt, label=above:5] {};
    \node at (6) [draw, circle, fill=black,scale=0.5, inner sep=2pt, label=below:6] {};
    \node at (7) [draw, circle, fill=black,scale=0.5, inner sep=2pt, label=below:7] {};
    \node at (8) [draw, circle, fill=black,scale=0.5, inner sep=2pt, label=above:8] {};

\end{tikzpicture}

\end{minipage}
\caption{A graph with non-unique minimum preimage in the case $d=3$. The green hyperedges are the two possible minimum preimages.}
\label{fig:ambiguous-graph}
\end{figure}
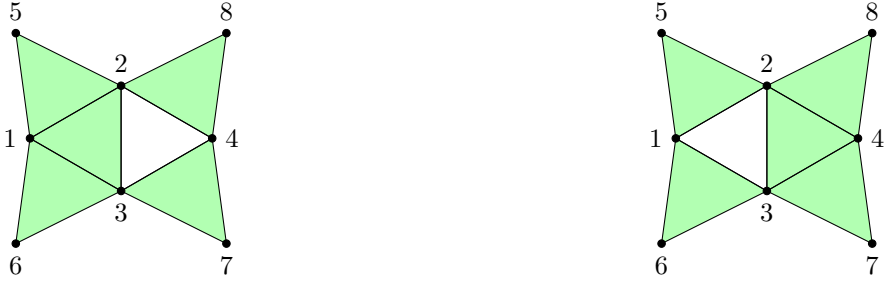

\begin{restatable}{lemma}{badgraphinisolation}\label{lem:bad-graph-in-isolation}
    For any ambiguous graph $G_a$ and any recovery algorithm $\cA$, given input $\pG = \proj(\rhG)$,
    \[
    \Pr(\cA(\pG)\ne\rhG) \ge \frac{1}{2}\Pr\b(\cli(G_a)\text{ is a 2-connected component of }\cliG \b)\,.
    \]
\end{restatable}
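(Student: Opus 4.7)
The plan is to condition on the event $E$ that $\cli(G_a)$ appears as a 2-connected component of $\cliG$ somewhere in $\pG$, and then to show that on this event any recovery algorithm errs with probability at least $1/2$, because the posterior on the true local restriction of $\rhG$ is split between at least two equally weighted minimum preimages.

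First I would fix, as a deterministic function of $\pG$, a vertex set $V\subset[n]$ on which $\cli(G_a)$ realizes a 2-connected component of $\cliG$ whenever $E$ holds (say the lexicographically smallest such $V$). By Lemma~\ref{lem:union-min-preimage}, any preimage of $\pG$ decomposes across 2-connected components, so the restriction to $V$ of any preimage is itself a preimage of $G_a$ on $V$, combinatorially independent of the preimages of the remaining components. Combined with the product Bernoulli$(p)$ prior on $\rhG$, this yields
\begin{equation*}
\Pr(\rhG|_V = H \mid \pG)\;=\;\frac{(p/(1-p))^{|H|}}{\sum_{H'\in\prim(G_a)}(p/(1-p))^{|H'|}}
\end{equation*}
for each preimage $H$ of $G_a$ placed on $V$.

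Next I would exploit the ambiguity of $G_a$: by definition it has at least two distinct minimum preimages of common size $k$, so the denominator above is at least $2(p/(1-p))^k$, and hence every minimum preimage carries posterior mass at most $1/2$. Non-minimum preimages contribute even less once $p<1/2$ (which certainly holds in our asymptotic regime $p=n^{-d+1+\delta}\to 0$). Thus the maximum posterior mass of any single preimage of $G_a$ on $V$ is at most $1/2$, so whatever $\cA(\pG)|_V$ the algorithm outputs we have $\Pr(\cA(\pG)|_V = \rhG|_V\mid \pG, E)\le 1/2$.

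Finally, since disagreement on $V$ forces $\cA(\pG)\ne \rhG$, I conclude $\Pr(\cA(\pG)\ne\rhG\mid E)\ge 1/2$, and multiplying by $\Pr(E)$ gives the lemma. I expect the only delicate point to be the clean decomposition of the posterior over 2-connected components together with the deterministic choice of $V$ when $E$ is witnessed by several embeddings, but both are direct consequences of Lemma~\ref{lem:union-min-preimage} and the product structure of the prior, so I do not foresee a genuine technical obstacle.
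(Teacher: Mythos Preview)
Your proposal is correct and follows essentially the same approach as the paper: both invoke Lemma~\ref{lem:union-min-preimage} to decompose preimages across 2-connected components, use the ambiguity of $G_a$ to conclude the posterior over the relevant component is split between at least two equally weighted minimum preimages, and deduce that any algorithm has conditional error probability at least $1/2$ on the event in question. Your write-up is simply more explicit---you spell out the posterior factorization, handle the choice of embedding $V$, and treat non-minimum preimages---whereas the paper compresses all of this into three sentences.
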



\begin{proof}
By Lemma~\ref{lem:union-min-preimage}, a minimum preimage of $\pG$ is given by the union of the minimum preimages of every 2-connected component of $\cliG$. Therefore, when $\cli(G_a)$ is a 2-connected component of $\cliG$, the minimum preimage of $\cliG$ is not unique. So no matter which hypergraph $\cA^*$ chooses, it has at least 1/2 probability of making a mistake. In other words,
\[
\Pr(\cA(\pG)\ne\rhG|\cli(G_a)\text{ is a 2-connected component of }\cliG) \ge 1/2\,.
\]
The lemma follows from Bayes rule. 
\end{proof}

It follows that to prove impossibility of (exact) recovery, we only need to find an ambiguous graph that is a 2-connected component with probability $\Omega_n(1)$.
Let the \emph{ambiguity threshold}, $\ath$, be the infimum of $\delta$ such that there exists an ambiguous graph appearing as a 2-connected component with probability $\Omega_n(1).$
\[
\ath \defeq \inf\{\delta:\exists G_a, \Pr(\cli(G_a)\text{ is a 2-connected component of }\cliG) = \Omega_n(1) \}\,.
\]
It then follows from Lemma~\ref{lem:bad-graph-in-isolation} that exact recovery is impossible for any $\delta$ that is at least $\ath$. 
In other words, we have the following corollary.
\begin{corollary}\label{cor:critical-threshold-smaller-ambiguous-threshold}
    For any $d$, we have $\cdel d \le \ath\,.$
\end{corollary}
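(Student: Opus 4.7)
The plan is to derive this corollary as an immediate consequence of Lemma~\ref{lem:bad-graph-in-isolation} via a simple set-inclusion argument on infima. Write $I = \{\delta: \text{exact recovery is impossible at }\delta\}$ so that $\cdel d = \inf I$, and write $A = \{\delta : \exists G_a \text{ ambiguous with } \Pr(\cli(G_a)\text{ is a 2-connected component of }\cliG)=\Omega_n(1)\}$ so that $\ath = \inf A$. Since $A \subseteq I$ implies $\inf I \le \inf A$, the whole corollary reduces to establishing the containment $A \subseteq I$.

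To verify $A \subseteq I$, I would fix any $\delta \in A$. By the definition of $A$ there is an ambiguous graph $G_a$ with $\Pr(\cli(G_a)\text{ is a 2-connected component of }\cliG) = \Omega_n(1)$. Feeding this $G_a$ into Lemma~\ref{lem:bad-graph-in-isolation} gives, for every recovery algorithm $\cA$,
\[
\Pr(\cA(\pG) \ne \rhG) \;\ge\; \tfrac{1}{2}\,\Pr(\cli(G_a)\text{ is a 2-connected component of }\cliG) \;=\; \Omega_n(1),
\]
which is bounded away from $0$ and in particular is not $o_n(1)$. Hence no algorithm satisfies the exact recovery condition \eqref{e:recovery} at this value of $\delta$, so $\delta \in I$. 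This proves $A \subseteq I$, and the chain $\cdel d = \inf I \le \inf A = \ath$ closes the argument.

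There is essentially no technical obstacle to the corollary itself, because Lemma~\ref{lem:bad-graph-in-isolation} has already done the probabilistic work and what remains is a purely logical reduction. The genuine difficulty lies downstream: to convert this corollary into the quantitative impossibility bounds stated in Table~\ref{table:delta_bounds}, one has to upper bound $\ath$ by exhibiting explicit ambiguous graphs (such as the 8-vertex graph of Figure~\ref{fig:ambiguous-graph} for $d=3$) and carrying out a moment calculation showing that such a graph appears as an isolated 2-connected component of $\cliG$ with $\Omega_n(1)$ probability at the claimed $\delta$ — that is where the work of identifying the true critical thresholds is concentrated.
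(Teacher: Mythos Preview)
Your proof is correct and follows the same approach as the paper: the paper simply asserts that the corollary follows from Lemma~\ref{lem:bad-graph-in-isolation}, and your set-inclusion argument $A \subseteq I \Rightarrow \inf I \le \inf A$ is exactly the intended unpacking of that assertion.
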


This will allow us to prove the impossibility results in Theorem~\ref{thm:main-three} and Theorem~\ref{thm:main-four-five} showing that $\ath$, and hence also $\cdel d$, is at most $\frac{2d-4}{2d-1}$ when $d\le 5$.  The construction of the ambiguous graph will be described in Section~\ref{sec:impossibility}. It will be a generalization of Figure~\ref{fig:ambiguous-graph} to general $d$.

This approach stops working for $d\ge 6$. In Section~\ref{sec:impossibility}, we will show that the regime of $\delta$ in which such an ambiguous graph appears as a 2-connected component in $\rhG$ is between $\frac{2d-4}{2d-1}$ and the \cth\ $\frac{d-1}{d+1}$. When $d\ge 6$, $\frac{2d-4}{2d-1}$ is above the \cth\ and the ambiguous graph typically overlaps with other hyperedges, i.e., it does not appear as a component. In this case it is no longer clear that there are at least two equally likely preimages.

We next work towards understanding
when a given sub-hypergraph will appear in $\rhG$.

\subsubsection{Appearance of Sub-hypergraphs in Random Hypergraphs}

We will need a lemma that determines the threshold density for a given graph to appear in the random $d$-hypergraph, $\rhG(n,d,p)$.
The graph version of the lemma was first proven in \cite{bollobas1981threshold} and simplified in \cite{rucinski1986strongly}. For random hypergraphs, the proof is similar and we include it in Appendix~\ref{sec:subgraph_threshold} for completeness.
\begin{lemma}\label{lem:subgraph_threshold}
    For a hypergraph $\shG = (V,\hE_\shG)$,
    define \[
    m(\shG) = \max_{\shG'\subset \shG}\frac{e_{\shG'}}{v_{\shG'}}\,,
    \]
    where $e_{\shG'}$ and $v_{\shG'}$ are the number of edges and the number of nodes of sub-hypergraph $\shG$.
    We have
    \[
    \Pr(\shG\subset \rhG) = 
    \begin{cases}
        o_n(1) &\text{if }p=o_n(n^{-1/m(\shG)})\\
        1-o_n(1) &\text{if }p=\omega_n(n^{-1/m(\shG)})\\
        \Omega_n(1) &\text{if } p =\Theta_n(n^{-1/m(\shG)}).
    \end{cases}
    \]
\end{lemma}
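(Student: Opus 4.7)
\textbf{Proof plan for Lemma~\ref{lem:subgraph_threshold}.} The strategy is the standard first/second-moment method applied at the densest sub-hypergraph. Throughout, let $X_\shG$ denote the number of (labeled or unlabeled, fixed once and for all) copies of $\shG$ in $\rhG$, and recall from Lemma~\ref{lem:number-appearance} that $\E X_\shG = \Theta_n(n^{v_\shG} p^{e_\shG})$.

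\emph{Sparse regime.} Let $\shG^*$ be a sub-hypergraph attaining the maximum in the definition of $m(\shG)$, so $e_{\shG^*}/v_{\shG^*}=m(\shG)$. Since $\shG\subset\rhG$ implies $\shG^*\subset \rhG$, Markov's inequality gives
\[
\Pr(\shG\subset\rhG)\ \le\ \Pr(X_{\shG^*}\ge 1)\ \le\ \E X_{\shG^*}\ =\ \Theta_n\b(n^{v_{\shG^*}} p^{e_{\shG^*}}\b).
\]
When $p=o_n(n^{-1/m(\shG)})=o_n(n^{-v_{\shG^*}/e_{\shG^*}})$, the right-hand side is $o_n(1)$, proving the first case.

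\emph{Dense and threshold regimes.} I will apply the second-moment method to $X_\shG$ itself. First, when $p=\omega_n(n^{-1/m(\shG)})$, the bound $m(\shG)\ge e_\shG/v_\shG$ gives $n^{v_\shG} p^{e_\shG}=\omega_n(1)$, so $\E X_\shG\to\infty$; when $p=\Theta_n(n^{-1/m(\shG)})$ we similarly get $\E X_\shG=\Omega_n(1)$. By Paley--Zygmund,
\[
\Pr(X_\shG\ge 1)\ \ge\ \frac{(\E X_\shG)^2}{\E X_\shG^2},
\]
so it suffices to show $\E X_\shG^2=O_n((\E X_\shG)^2)$, i.e., $\var(X_\shG)=O_n((\E X_\shG)^2)$.

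\emph{The variance calculation, the main step.} Decompose $X_\shG^2$ as a sum over ordered pairs of copies $(\shG_1,\shG_2)$ of $\shG$ in $\binom{[n]}{d}$, grouped by the isomorphism type of the intersection $\shF=\shG_1\cap \shG_2$ (a sub-hypergraph of $\shG$, possibly with isolated vertices on the shared vertex set). A standard counting argument yields
\[
\E X_\shG^2\ =\ \sum_\shF\ \Theta_n\b(n^{2v_\shG - v_\shF}\, p^{\,2e_\shG - e_\shF}\b),
\]
where the sum is over the finitely many isomorphism types of $\shF$ (with $\shF=\emptyset$ corresponding to the vertex-disjoint case, which contributes $(1+o(1))(\E X_\shG)^2$). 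Dividing by $(\E X_\shG)^2=\Theta_n(n^{2v_\shG}p^{2e_\shG})$, the contribution of each nontrivial $\shF$ with $v_\shF\ge 1$ is of order $n^{-v_\shF}p^{-e_\shF}$, which is $O_n(1)$ precisely when $p=\Omega_n(n^{-v_\shF/e_\shF})$; since $v_\shF/e_\shF\ge 1/m(\shG)$ and $p=\omega_n(n^{-1/m(\shG)})$ (resp.\ $p=\Theta_n(n^{-1/m(\shG)})$), every such term is $o_n(1)$ (resp.\ $O_n(1)$). Summing over the constant number of isomorphism types of $\shF$, we conclude $\E X_\shG^2=(1+o_n(1))(\E X_\shG)^2$ in the $\omega_n$ regime and $\E X_\shG^2=O_n((\E X_\shG)^2)$ in the $\Theta_n$ regime, giving $\Pr(\shG\subset\rhG)=1-o_n(1)$ and $\Omega_n(1)$ respectively. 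The only subtle point is bookkeeping in the sum over $\shF$: one must note that subgraphs with $v_\shF\ge 1$ and $e_\shF=0$ (isolated shared vertices) contribute $n^{-v_\shF}=o_n(1)$ trivially, so the bound $v_\shF/e_\shF\ge 1/m(\shG)$ is only needed when $e_\shF\ge 1$. This is the main obstacle, but it is purely combinatorial and routine.
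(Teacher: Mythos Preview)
Your proposal is correct and follows essentially the same approach as the paper: first moment (Markov) applied at the densest sub-hypergraph for the sparse regime, and the standard second-moment/Paley--Zygmund argument on $X_\shG$ with the variance decomposed over intersection types for the other two regimes. If anything, your bookkeeping is slightly more careful than the paper's, since you explicitly handle the $e_\shF=0$ overlap terms and correctly invoke $e_\shF/v_\shF\le m(\shG)$ (the paper's appendix writes $e_{\shG'}/v_{\shG'}\le e_\shG/v_\shG$, which is only the balanced case, though the subsequent inequality uses $m(\shG)$ as you do).
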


\subsubsection{Reconstruction Result by Nonexistence of Ambiguous Graphs}\label{sec:main-idea-reconstruction}
In this section we prove the following theorem.
\begin{theorem}\label{thm:delta-lower}
    When $d=3$ and $\delta<2/5$ or when $d=4,5$ and $\delta<1/2$, the MAP rule achieves exact recovery and moreover it can be implemented efficiently.
\end{theorem}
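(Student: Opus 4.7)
The plan is to combine the efficiency result of Theorem~\ref{thm:sparse-algo} with a first-moment argument over ``bad'' 2-connected components. For efficiency, observe that the claimed thresholds lie strictly below the 2-connectivity threshold $(d-1)/(d+1)$: one has $2/5 < 1/2$ for $d=3$ and $1/2 < 3/5, 2/3$ for $d=4,5$, so Algorithm~\ref{alg:map} correctly implements the MAP rule in time $O_n(n^d)$ with high probability. It then remains to show that MAP outputs $\rhG$ with probability $1 - o_n(1)$.

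The next step is to reformulate the MAP error event. By Lemma~\ref{lem:union-min-preimage}, a minimum preimage of $\pG$ is the union of minimum preimages of $\proj(C)$ across the 2-connected components $C$ of $\cliG$. Hence MAP fails if and only if some component $C$ is \emph{bad}, meaning that $\rhG|_C$ is not the unique minimum preimage of $\proj(C)$; equivalently, there exists $T' \ne \rhG|_C$ with $\proj(T') = \proj(\rhG|_C)$ and $|T'| \le |\rhG|_C|$. By Lemma~\ref{lem:component-constant-size}, every 2-connected component of $\cliG$ has size at most some $K = K(d,\delta) = O_n(1)$ with high probability, so any bad $\rhG|_C$ belongs to a finite family (up to isomorphism) of 2-connected sub-hypergraphs.

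Let $\mathcal{B}$ denote the set of isomorphism classes of 2-connected hypergraphs $T$ with $|T| \le K$ that admit an alternative cover $T' \ne T$ with $\proj(T') = \proj(T)$ and $|T'| \le |T|$. The error event is then contained in
\[
\{\exists\, T \in \mathcal{B} : T \subseteq \rhG\} \;\cup\; \{\text{some component of } \cliG \text{ has size} > K\}.
\]
The second event has probability $o_n(1)$ by Lemma~\ref{lem:component-constant-size}. For the first, a union bound together with Lemma~\ref{lem:number-appearance} yields
\[
\sum_{T\in\mathcal{B}} \E X_T \;=\; \sum_{T\in\mathcal{B}} \Theta_n\b(n^{v_T + (1-d+\delta)\,e_T}\b),
\]
which is $o_n(1)$ provided every $T \in \mathcal{B}$ satisfies $\delta < d - 1 - v_T/e_T$. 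Thus it suffices to establish the combinatorial inequality
\[
\min_{T\in\mathcal{B}}\,\frac{v_T}{e_T} \;\ge\; d - 1 - \delta^*,
\]
namely $v_T/e_T \ge 8/5$ when $d=3$ and $v_T/e_T \ge d - 3/2$ when $d \in \{4,5\}$.

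The main obstacle is verifying this last inequality across all bad witness hypergraphs. For $d=3$ the extremal example is the 5-triangle ambiguous graph of Figure~\ref{fig:ambiguous-graph}, which has $(v,e) = (8,5)$ and achieves the bound $8/5$; I would carry out a case analysis of 2-connected triangulated hypergraphs admitting an alternative cover of equal or smaller size and show that none can beat this $v/e$. For $d = 4, 5$ I expect an analogous but more involved analysis, generalizing the ambiguous configuration to higher uniformity and checking that the densest admissible bad witness has $v/e \ge d - 3/2$ — precisely the point where the simple analytic bound breaks and where, as discussed in Section~\ref{s:mainIdeas}, computer-assisted verification is needed to push all the way to the conjectured threshold $\frac{2d-4}{2d-1}$.
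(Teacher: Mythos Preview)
Your plan is the paper's: efficiency via Theorem~\ref{thm:sparse-algo}, then reduce correctness of MAP to a finite verification that no ``bad'' witness sub-hypergraph of bounded size appears in $\rhG$, deferring that verification to an exhaustive case analysis. The paper packages the reduction as Lemma~\ref{lem:no-ambiguous} and the finite check as Lemma~\ref{lem:delta-lowerbound}, the latter carried out by the DFS-based computer search of Appendix~\ref{sec:algo-proof}.

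One slip: your displayed criterion has the inequality reversed. From $\E X_T=\Theta_n\big(n^{v_T-(d-1-\delta)e_T}\big)$ you need $v_T/e_T<d-1-\delta$, so the uniform condition over all $\delta<\delta^*$ is
\[
\max_{T\in\mathcal{B}}\frac{v_T}{e_T}\;\le\; d-1-\delta^*,
\]
not $\min\ge$; for $d=3$ this reads $v_T/e_T\le 8/5$, which the $(v,e)=(8,5)$ witness saturates from below.

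A more substantive difference in framing: your $\mathcal{B}$ contains non-minimal $T$ (those admitting a \emph{strictly} smaller cover) whose projection need not be ambiguous, and your first-moment enumeration must cover these too. The paper avoids enlarging the search this way. Rather than the containment ``MAP fails $\Rightarrow$ some $T\in\mathcal{B}$ occurs'', Lemma~\ref{lem:no-ambiguous} writes $\Pr(\cA^*(\pG)=\rhG)=\E_{\pG}\big[p_{\rhG\mid\pG}(\cA^*(\pG)\mid\pG)\big]$ and shows that once every component has a \emph{unique} minimum preimage the posterior mass on the MAP output is $1-O_n(p)$ per component, multiplied over $O_n(n^dp)$ components. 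This reduces the finite check purely to ambiguous projections, and the non-minimal case is absorbed for free by the posterior ratio. Your direct containment is cleaner to state, but the posterior route buys a smaller enumeration.
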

In the regime where $\delta<
\frac{d-1}{d+1}$, which is the regime we care about when $d\le 5$, the converse of Lemma~\ref{lem:bad-graph-in-isolation} is also true. That is, if with high probability no ambiguous graph (i.e., with non-unique minimum cover) appears in $\pG$ as a 2-connected component, then MAP succeeds with high probability.

\begin{lemma}\label{lem:no-ambiguous}
    Assume $\delta<
    \frac{d-1}{d+1}$.
    If for all finite ambiguous graphs $G_a$,
    \[
    \Pr\b(\cli(G_a)\text{ is a 2-connected component of }\cliG \b)=o_n(1)\,,
    \]
    then we have 
    \[
    \Pr(\cA^*(\pG)=\rhG)\ge 1- o_n(1)\,.
    \]
\end{lemma}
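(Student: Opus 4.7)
The strategy is to reduce the global success event of MAP to a per-component analysis via Lemma~\ref{lem:union-min-preimage} and Lemma~\ref{lem:component-constant-size}, and then to dispatch the two possible per-component failure modes---one via the hypothesis and one via a direct combinatorial count.

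First, condition on the event (holding with probability $1-o_n(1)$ by Lemma~\ref{lem:component-constant-size}) that every 2-connected component of $\cliG$ has at most $K=K(d,\delta)$ hyperedges, so that only finitely many isomorphism classes of 2-connected components need be considered. Lemma~\ref{lem:union-min-preimage} then writes $\cA^*(\pG)=\bigsqcup_i H^*_i$, where $H^*_i$ is a minimum preimage of $\proj(C_i)$ for each 2-connected component $C_i$ of $\cliG$; similarly $\rhG=\bigsqcup_i(\rhG\cap C_i)$ and a short argument using 2-connectivity shows that $\rhG\cap C_i$ is itself a preimage of $\proj(C_i)$. Hence MAP succeeds on $C_i$ iff $\proj(C_i)$ has a unique minimum preimage equal to $\rhG\cap C_i$, so the failure event at $C_i$ decomposes into two disjoint subevents: (A) $\proj(C_i)$ is ambiguous in the sense of Definition~\ref{def:ambiguous}; or (B) $\proj(C_i)$ has a unique minimum preimage $H^*_i \neq \rhG\cap C_i$, which forces $|\rhG\cap C_i| > |H^*_i|$, i.e., the truth $\rhG\cap C_i$ is non-minimum.

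For case (A) the hypothesis applies directly: for each ambiguous graph $G_a$ of size at most $K$ the probability that $\cli(G_a)$ arises as a 2-connected component of $\cliG$ is $o_n(1)$, and union-bounding over the finite family of such graphs gives an $o_n(1)$ contribution. For case (B), one enumerates the finitely many 2-connected hypergraphs $T$ of size at most $K$ whose projection admits a strictly smaller preimage, and bounds the expected number of 2-connected components of $\cliG$ isomorphic to $T$ by $\Theta(n^{v_T} p^{|T|})$ using Lemma~\ref{lem:number-appearance} (the isolation factor being bounded by $1$). The 2-connectivity constraint $v_T \le |T|(d-2)+2$, the assumption $\delta<\frac{d-1}{d+1}$, and the fact that $|T|$ strictly exceeds the size of any minimum preimage of $\proj(T)$ together force each such term to be $o_n(1)$.

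The main obstacle is case (B): the hypothesis is silent about non-minimum truths sitting under a projection with a unique minimum, and one must verify by a direct (finite but delicate) combinatorial check that $n^{v_T}p^{|T|}=o_n(1)$ for every non-minimum 2-connected $T$ of size at most $K$. The relevant ingredients are the density threshold $\delta<\frac{d-1}{d+1}$ and the structural fact that a non-minimum 2-connected preimage $T$ satisfies $|T|\ge|H^*|+1$, contributing an extra factor of $p=n^{-d+1+\delta}$ relative to the count of the minimum preimage; the bulk of the work in the proof goes into checking this case by case over the finite list of candidate shapes.
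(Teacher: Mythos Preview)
Your decomposition into 2-connected components via Lemma~\ref{lem:union-min-preimage}, followed by the split into case~(A) (ambiguous $\proj(C_i)$) and case~(B) (unique minimum preimage but $\rhG\cap C_i$ is non-minimum), is correct and is exactly how the paper sets things up. Your treatment of case~(A) also matches the paper.

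Where you diverge is case~(B). The paper does \emph{not} enumerate non-minimum shapes $T$ at all. Instead it computes the posterior directly: once one conditions on the high-probability event that every $C_i$ has at most $K$ hyperedges and no $\proj(C_i)$ is ambiguous,
\[
p_{\rhG\mid\pG}\bigl(\cA^*(\pG)\mid\pG\bigr)
=\prod_{i=1}^{m}\frac{1}{\sum_{H'\in\prim(\proj(C_i))}\bigl(\tfrac{p}{1-p}\bigr)^{|H'|-r_i}}
\ \ge\ \prod_{i=1}^{m}\frac{1}{1+O_n(1)\cdot p}
=(1-O_n(p))^{m},
\]
since under uniqueness the minimum preimage contributes the sole $1$ in each denominator, every other preimage has at least one extra hyperedge, and there are only $O_n(1)$ preimages because $|C_i|\le K$. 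With $m\le|\hE_\rhG|=O_n(n^dp)$ w.h.p., this gives $1-O_n(n^dp^2)=1-O_n(n^{-d+2+2\delta})=1-o_n(1)$. That single posterior line replaces your entire case-(B) program.

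Your ``extra factor of $p$'' remark is precisely the seed of this argument, but you then wrap it in an enumeration over non-minimum $T$ and declare a case-by-case check to be ``the bulk of the work.'' That check is both unnecessary and, as you state it, incomplete: the two ingredients you name---$v_T\le|T|(d-2)+2$ and $|T|>|H^*|$---do not by themselves force $n^{v_T}p^{|T|}=o_n(1)$ when $|T|\le d$ (the first bound gives exponent $2+|T|(\delta-1)$, which can be positive in that range; the second gives exponent $1+\delta+|H^*|(\delta-1)$, which can also be positive for small $|H^*|$). You would still have to rule out small non-minimum 2-connected shapes by hand, and you do not do so. The posterior route avoids all of this.
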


The lemma is proved in Appendix~\ref{sec:proof-no-ambiguous}. 
Here we provide a sketch of the proof. If there is no ambiguous graph in $\pG$, the projections of every 2-connected components have a unique minimum preimage. As shown in Lemma~\ref{lem:component-constant-size}, all 2-connected components are of constant size. Under this condition, the minimum preimage of the 2-connected component is correct with probability $1-O_n(p)$, as any other preimage is $O_n(p)$ times less likely in the posterior and there are only constant number of possible preimages. The overall minimum preimage, as given by $\cA^*$, is then correct with high probability by union bound over all 2-connected components.

Recall the definition of the ambiguous threshold, $\ath$, Lemma~\ref{lem:no-ambiguous} implies that the critical threshold $\cdel d$ is above $\ath$ if $\ath$ is below $\tcth$.
\begin{corollary}
    For any $d$, if $\ath\le\tcth$, we have $ \cdel d \ge \ath$.
\end{corollary}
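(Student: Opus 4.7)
The plan is to derive this directly from Lemma~\ref{lem:no-ambiguous}. I would fix an arbitrary $\delta < \ath$ and show that the MAP rule achieves exact recovery at this $\delta$; by the definition of $\cdel d$ as the infimum of parameters at which recovery is impossible, this immediately gives $\cdel d \ge \delta$, and letting $\delta \uparrow \ath$ yields $\cdel d \ge \ath$.

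To apply Lemma~\ref{lem:no-ambiguous} at $\delta$, two conditions need checking. The first, $\delta < \tcth$, follows from the hypothesis $\ath \le \tcth$ of the corollary together with $\delta < \ath$. The second, that $\Pr(\cli(G_a)\text{ is a 2-connected component of }\cliG) = o_n(1)$ for every finite ambiguous graph $G_a$, is the substantive part. By the definition of $\ath$ as an infimum, for every such $G_a$ this probability fails to be $\Omega_n(1)$ at $\delta$.

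The main obstacle is therefore upgrading ``not $\Omega_n(1)$'' to ``$o_n(1)$''. My plan is to invoke a zero-one type dichotomy for the asymptotic order of this probability: for any fixed finite graph $G_a$ and any fixed $\delta$, the probability that $\cli(G_a)$ appears as a 2-connected component of $\cliG$ is either bounded below by a positive constant for all large $n$, or else it tends to $0$ as $n \to \infty$. This can be established by decomposing the event into (i) the appearance of $\cli(G_a)$ as a sub-hypergraph of $\rhG$, whose threshold behavior at $p = n^{-d+1+\delta}$ is governed by Lemma~\ref{lem:subgraph_threshold}, and (ii) the isolation event that no hyperedge of $\rhG$ outside $\cli(G_a)$ shares two or more vertices with it, whose asymptotic probability is given by a standard Poissonization or second-moment computation. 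Combining this dichotomy with the definition of $\ath$ forces the $o_n(1)$ bound for every finite ambiguous $G_a$ at every $\delta < \ath$, Lemma~\ref{lem:no-ambiguous} then applies, and the proof is complete.
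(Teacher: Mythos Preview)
Your approach matches the paper's: both derive the corollary directly from Lemma~\ref{lem:no-ambiguous}, and indeed the paper's entire proof is the one-sentence remark preceding the statement. You go further than the paper by explicitly confronting the gap between ``not $\Omega_n(1)$'' and ``$o_n(1)$'' via a threshold dichotomy, which the paper leaves implicit.

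One correction to your decomposition: in step (i) you require ``the appearance of $\cli(G_a)$ as a sub-hypergraph of $\rhG$'', but that is not the relevant event. For $\cli(G_a)$ to be a 2-connected component of $\cliG$ one needs $\cli(G_a)\subseteq\cliG$, equivalently $G_a\subseteq\pG$, equivalently that \emph{some preimage} $H_a\in\prim(G_a)$ lies in $\rhG$; the hyperedges of $\cli(G_a)$ themselves need not all be in $\rhG$. Since a finite $G_a$ has only finitely many preimages up to isomorphism, applying Lemma~\ref{lem:subgraph_threshold} to each preimage (as the paper does in Lemma~\ref{lem:bad-graph-contain-probability}) and combining with the isolation estimate of Lemma~\ref{lem:branching} still yields the dichotomy you want, so the rest of your plan goes through once this is corrected.
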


Combining this corollary with Corollary~\ref{cor:critical-threshold-smaller-ambiguous-threshold}, we get that the ambiguous threshold $\ath$ fully determines the critical threshold $\cdel d$ if $\ath$ is below $\tcth$.
\begin{corollary}
    For any $d= 3,4,5$, we have $\ath\le\tcth$, and hence $ \cdel d = \ath$.
\end{corollary}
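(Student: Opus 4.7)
The plan is to piece together the two immediately preceding corollaries. Corollary~\ref{cor:critical-threshold-smaller-ambiguous-threshold} already gives $\cdel d \le \ath$ unconditionally, and the corollary preceding this one gives $\cdel d \ge \ath$ under the hypothesis $\ath \le \tcth$. So the entire statement reduces to verifying $\ath \le \tcth$ for $d\in\{3,4,5\}$; once that is in hand, $\cdel d = \ath$ follows by sandwiching.

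To upper-bound $\ath$, the plan is to exhibit a concrete ambiguous graph $G_a$ (the $d$-dimensional analogue of the eight-vertex, six-hyperedge example in Figure~\ref{fig:ambiguous-graph}, to be constructed in Section~\ref{sec:impossibility}) and show that $\cli(G_a)$ appears as a 2-connected component of $\cliG$ with probability $\Omega_n(1)$ as soon as $\delta \ge \tfrac{2d-4}{2d-1}$. The candidate $G_a$ consists of two ``central'' hyperedges sharing $d-1$ vertices, together with additional ``boundary'' hyperedges so that the entire preimage has exactly two minimum covers (each obtained by swapping one central hyperedge for an alternative completion through a private boundary vertex).

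The second step is quantitative: apply Lemma~\ref{lem:subgraph_threshold} to the sub-hypergraph $\shG$ equal to one minimum preimage of $G_a$, and compute $m(\shG) = \max_{\shG'\subset\shG} e_{\shG'}/v_{\shG'}$. A direct count of vertices and hyperedges in the construction gives $1/m(\shG) = \tfrac{2d-4}{2d-1}$, meaning that for $\delta$ just above this value, the expected number of such sub-hypergraphs is $\Theta_n(1)$. A routine second-moment computation then upgrades ``subgraph exists'' to ``the configuration appears with no extra hyperedge attached to it,'' i.e.\ genuinely as a 2-connected component: below $\tcth$, Lemma~\ref{lem:component-constant-size} ensures that 2-connected components stay of constant size, so an isolated copy of $\shG$ is not w.h.p.\ absorbed into a larger component. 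This shows $\ath \le \tfrac{2d-4}{2d-1}$.

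It then remains to check the arithmetic inequality $\tfrac{2d-4}{2d-1} \le \tfrac{d-1}{d+1}$: for $d=3$ this is $\tfrac{2}{5}\le \tfrac{1}{2}$, for $d=4$ it is $\tfrac{4}{7}\le \tfrac{3}{5}$, and for $d=5$ equality $\tfrac{2}{3}=\tfrac{2}{3}$ holds (and this is precisely where the method fails for $d\ge 6$). Hence $\ath \le \tcth$ for $d\in\{3,4,5\}$, and combining with the two preceding corollaries yields $\cdel d = \ath$. The main obstacle I expect is the non-uniqueness check: one must verify carefully that no collapse using a hyperedge outside the two intended minimum covers yields a strictly smaller preimage, and that the density-maximizing sub-hypergraph used in Lemma~\ref{lem:subgraph_threshold} really corresponds to $\tfrac{2d-4}{2d-1}$ rather than a larger ratio coming from some dense sub-configuration.
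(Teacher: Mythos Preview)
Your overall plan matches the paper's: reduce to $\ath \le \tcth$, then exhibit the ambiguous graph $\aG$ of Section~\ref{sec:impossibility}, show it appears as a 2-connected component for $\delta \ge \tfrac{2d-4}{2d-1}$, and check $\tfrac{2d-4}{2d-1} \le \tfrac{d-1}{d+1}$ for $d\in\{3,4,5\}$. Two points to tighten, however.

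First, the step ``subgraph exists'' $\Rightarrow$ ``appears as a 2-connected component'' is not a second-moment computation, and Lemma~\ref{lem:component-constant-size} is the wrong tool: that lemma only bounds the \emph{size} of every 2-connected component, it does not prevent your specific copy of $\shG$ from acquiring a few extra 2-neighbors (which would already destroy the ``is a component'' property while keeping the component small). What you need is the conditional statement of Lemma~\ref{lem:branching}: given $\hE_a \subset \hE_\rhG$, the probability that $\cli(\hE_a)$ has no 2-neighbor in $\cliG$ is $\Omega_n(1)$ for $\delta \le \tfrac{d-1}{d+1}$ (with the boundary case $\delta = \tfrac{d-1}{d+1}$, relevant for $d=5$, handled by the Harris-inequality argument there). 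This is exactly how the paper obtains Corollary~\ref{cor:bad-graph-probability}.

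Second, a minor parameterization slip: $1/m(\shG)$ is not $\tfrac{2d-4}{2d-1}$; rather $m(\shG) = \tfrac{2d-1}{2d^2-5d+5}$ (Lemma~\ref{lem:density_bad_graph}), and converting the condition $p = \Omega_n(n^{-1/m(\shG)})$ into the $\delta$-parameterization $p = n^{-d+1+\delta}$ is what yields the threshold $\delta \ge \tfrac{2d-4}{2d-1}$.
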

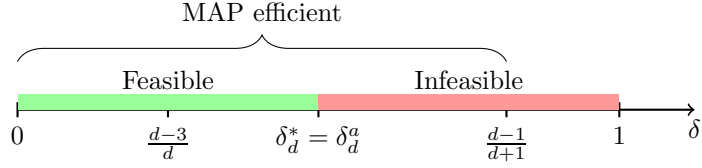
\begin{figure}
    \centering
    \begin{tikzpicture}
    \draw[thick, ->] (-4,0) -- (5,0) node[anchor=north] {$\delta$};

    \draw[thick] ({-2},0.1) -- ({-2},-0.1) node[below] {$\frac{d-3}{d}$};

    \draw[thick] (0,0.1) -- (0,-0.1) node[below] {$\cdel d = \ath$};

    \draw[thick] (2.5,0.1) -- (2.5,-0.1) node[below] {$
    \frac{d-1}{d+1}$};

    \foreach \x in {0,1}
        \draw[thick] (8*\x-4,0.1) -- (8*\x-4,-0.1) node[below] {\x};

    \fill[green!40] (-4,0) rectangle (0.4,0.2);
    \node at (-2,0.4) {Feasible};

    \fill[red!40] (0,0) rectangle (4,0.2);
    \node at (2,0.4) {Infeasible};

    \draw [decorate,decoration={brace,amplitude=10pt,raise=4pt},yshift=0pt]
    (-4,0.5) -- (2.5,0.5) node [black,midway,yshift=0.8cm] {MAP efficient};
\end{tikzpicture}
    \caption{
    Relation between different thresholds. The maximum clique cover algorithm $\cliA$ succeeds with high probability up to $\delta=\frac{d-3}{d}$. The MAP algorithm is efficient up to $\tcth$ and succeeds with high probability up to threshold $\cdel d$. If $\ath<\tcth$, then $\cdel d$ is the same as the ambiguous threshold $\ath$.
    }
    \label{fig:thresholds-relation}
\end{figure}

As long as we can check the condition in Lemma~\ref{lem:no-ambiguous} for a specific $\delta$, MAP is optimal. If $\delta<\tcth$, then with high probability all 2-connected components have size bounded by $(2^d+1)/(\frac{d-1}{d+1}-\delta)$, so there are only finitely many graphs we need to check. This gives us the following computer assisted method of proving that MAP works when $\delta $ is below a hypothesized threshold $\delta_0$:
\begin{enumerate}
    \item Enumerate over all hypergraphs $\shG$ with at most $1+2^{d+1}/(\frac{d-1}{d+1}-\delta_0)$ hyperedges.
    \item Compute the probability that $\shG\subset \rhG$ by Lemma~\ref{lem:subgraph_threshold} with $p=n^{-d+1+\delta_0}$.
    \item Enumerate all possible preimages of $\proj(\shG)$ and see if $\proj(\shG)$ is ambiguous.
    \item If all graphs are either not ambiguous or have vanishing probability of occurring, the condition in Lemma~\ref{lem:no-ambiguous} is satisfied and MAP succeeds with high probability at $\delta=\delta_0$.
\end{enumerate}
Since $\Pr(\shG\subset \rhG)$ monotonically increases with $\delta$, we know the same condition holds for any $\delta<\delta_0$.

Although this approach can be carried out in principle, the number of hypergraphs with at most $1+2^{d+1}/(\frac{d-1}{d+1}-\delta_0)$ hyperedges is a huge number and cannot be verified in reasonable time. Instead of doing a brute force search, we will utilize the structure of how 2-connected components grow, as discussed in Section~\ref{sec:const-size}, to reduce the runtime. The runtime of the search can be further reduced by identifying properties of ambiguous graph and focusing on graphs with such properties. 

With the computer search, we are able to prove the following lemma. The search algorithm will be discussed in more detail in Section~\ref{sec:algo-proof}.
\begin{restatable}{lemma}{lowerbound}\label{lem:delta-lowerbound}
    When $d=3$ and $\delta<2/5$ or when $d=4,5$ and $\delta<1/2$, any ambiguous graph $G_a$ 
    satisfies
    $
    \Pr(G_a\subset \pG)=o_n(1).
    $
\end{restatable}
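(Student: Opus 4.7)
My plan is to reduce the statement to a finite computer-assisted case analysis. For any graph $G_a$, the event $G_a \subset \pG$ requires some minimal set of hyperedges in $\rhG$ to cover every edge of $G_a$, so by the union bound
\[
\Pr(G_a \subset \pG) \le \sum_{H \text{ minimal cover of } G_a} \Pr(H \subset \rhG).
\]
By Lemma~\ref{lem:subgraph_threshold}, each summand is $o_n(1)$ provided $m(H) > 1/(d-1-\delta)$. Hence it suffices to show that for every ambiguous $G_a$ and every minimal cover $H$ of $G_a$ one has $m(H) > 1/(d-1-\delta_0)$, where $\delta_0 = 2/5$ when $d=3$ and $\delta_0 = 1/2$ when $d \in \{4,5\}$.

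To make this a finite check I would invoke Lemma~\ref{lem:component-constant-size}: at $\delta = \delta_0 < \tcth$, every 2-connected component of $\cliG$ has at most $K := 1 + 2^{d+1}/(\tcth - \delta_0) = O_n(1)$ hyperedges with high probability. Since ambiguity can only arise inside a 2-connected component (by the decomposition in Lemma~\ref{lem:union-min-preimage}), I may restrict to ambiguous graphs $G_a$ of the form $\proj(H_0)$ for some 2-connected hypergraph $H_0$ with at most $K$ hyperedges; each minimal cover $H$ of such a $G_a$ is likewise 2-connected of size $O_n(1)$. This leaves a finite, though very large, set of candidates.

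For tractability I would exploit the growth-step characterization of 2-connected hypergraphs from Section~\ref{sec:const-size}: every 2-connected hypergraph is obtained from a single hyperedge by a sequence of local operations that each attach a hyperedge overlapping the current structure in at least two vertices. I would walk this tree of partial hypergraphs, maintaining $m$ of the current sub-hypergraph, and abort any branch the moment $m(H) \ge 1/(d-1-\delta_0)$: since $m(\cdot)$ is monotone nondecreasing under supersets, every descendant of such a branch already satisfies the desired inequality and needs no further exploration. Additional pruning comes from structural necessary conditions for ambiguity; for example, if a hyperedge $h \in H$ contains a vertex pair belonging to no other hyperedge of $H$, then $h$ must appear in every preimage of $\proj(H)$, so $H$ is ambiguous iff $H \setminus \{h\}$ is (after peeling), which iteratively shrinks the effective search space. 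At each surviving leaf $H$, I would brute-force enumerate all preimages of $\proj(H)$ of size $\le |H|$, flag $H$ as ambiguous when a second such preimage exists, and compute $m(H)$ directly to confirm the threshold inequality.

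The main obstacle is computational. The constant $K$ grows rapidly with $d$ and is already in the hundreds for $d=3$ at $\delta_0 = 2/5$, so raw enumeration is hopeless. The argument therefore rests on the prunings being both sound (no ambiguous $G_a$ violating $m(H) > 1/(d-1-\delta_0)$ is ever discarded) and sharp enough that the surviving branches stay tractable. Designing, implementing, and certifying these prunings is the crux of the proof; the detailed search algorithm and its correctness are deferred to Section~\ref{sec:algo-proof}.
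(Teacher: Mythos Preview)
Your plan---DFS over the $\grow$ tree of 2-connected hypergraphs, pruning by a density criterion, and testing each visited hypergraph for ambiguity of its projection---is exactly the paper's approach. The paper prunes on $\E X_H$ rather than on $m(H)$; the two are essentially interchangeable here (yours is slightly more aggressive and equally sound, since $m(\cdot)$ is monotone under supersets), and like you it defers the actual verification to a computer search.

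The substantive idea you are missing is Lemma~\ref{lem:ambiguous-graph-has-connected-preimage}: for $\delta<\frac{2d-4}{2d-1}$, any ambiguous $G_a$ with $\Pr(G_a\subset\pG)=\Omega_n(1)$ has a minimum preimage containing two hyperedges that share at least two vertices. This lets the paper root the DFS at a \emph{pair} of overlapping hyperedges rather than at a single hyperedge $[d]$, and that re-rooting is what brings the search down to something that actually finishes for $d\le 5$. Rooted at $[d]$, the depth bound coming from Lemma~\ref{lem:component-constant-size} is on the order of $2^{d+1}/(\tcth-\delta_0)$---already in the hundreds for $d=3$---and neither your $m(H)$-pruning nor your forced-hyperedge peeling (both sound) recovers this shortcut. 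The overlap lemma has a short self-contained proof: on the symmetric difference of two distinct minimum preimages with no pairwise 2-overlap, every vertex must have hyperedge-degree at least $2$, which forces $v\le dk/2$ and hence $\E X=o_n(1)$. This degree-counting step is really the linchpin of tractability and is what your proposal would need to add.
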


Combining this lemma and Lemma~\ref{lem:no-ambiguous} completes the proof of Theorem~\ref{thm:delta-lower}.
\subsection{Upper Bound on $\cdel d$ for Large $d$ and Proof of Theorem~\ref{thm:main-large-d}}
We identify a sufficient condition for the (optimal) MAP rule to fail:
Suppose there is a hyperedge $\he$ in $\rhG$ where every pair of nodes in $\he$ is also included in other hyperedges in $\rhG$. In this case the graph $\rhG\setminus \{\he\}$ has higher probability and has the same graph projection. 
Because the optimal algorithm outputs a minimum preimage, it does not output the original hypergraph $\rhG$: deleting $\he$ forms a smaller preimage.

We formalize this sufficient condition and consider a hypergraph $\bG$ with the following hyperedges:
\begin{itemize}
    \item $\{v_1,\cdots,v_d\}$,
    \item $\{v_i,v_j,u_{ij}^{(1)}, u_{ij}^{(2)},\cdots, u_{ij}^{(d-2)}\}$ for all $\{i,j\}\subset [d] $, where for each $i$ and $j$ the nodes $u_{ij}^{(1)}, u_{ij}^{(2)},\cdots, u_{ij}^{(d-2)}$ are arbitrary.
\end{itemize}

From the discussion above, we know that $\cA^*$ will fail if $\bG\subset\rhG$, because the hyperedge $v_1,\cdots,v_d$ can be removed from the output and increase the posterior probability. 
Therefore, we have
\[
\Pr(\cA^*(\pG)\ne \rhG) \ge \Pr(\bG\subset\rhG)\,.
\]
By Lemma~\ref{lem:subgraph_threshold}, this occurs with probability $\Omega_n(1)$ when $p=\Omega_n(n^{-1/m(\bG)})$. Since 
\[
m(\bG) = \frac{e_{\bG}}{v_{\bG}} = \frac{\binom{d}{2}+1}{d+\binom{d}{2}(d-2)}\,,
\]
this is equivalent to 
\[
p=\Omega_n(n^{-d\frac{d^2-3d+4}{d^2-d+2}})\,,
\]
or $\delta\ge \frac{d^2-d-2}{d^2-d+2}$.
We have shown the following impossibility result:
\begin{theorem}\label{thm:large-d}
    Exact recovery is information theoretically impossible when  $\delta\ge \frac{d^2-d-2}{d^2-d+2}$\,.
\end{theorem}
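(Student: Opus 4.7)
My plan is to exhibit a fixed labeled hypergraph $\bG$ whose appearance inside $\rhG$ as a subgraph deterministically forces the MAP rule to err, and then to invoke Lemma~\ref{lem:subgraph_threshold} to determine the smallest $\delta$ at which $\bG$ appears with probability $\Omega_n(1)$. Since $\cA^*$ is information-theoretically optimal, constant failure probability of $\cA^*$ implies infeasibility of exact recovery.

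The sufficient condition for MAP failure is clean: if $\rhG$ contains a hyperedge $\he$ whose $\binom{d}{2}$ projected edges are each covered by some \emph{other} hyperedge of $\rhG$, then $\rhG\setminus\{\he\}$ is a strictly smaller preimage of $\proj(\rhG)$, so $\rhG$ is not a minimum preimage and hence $\cA^*(\pG)\ne\rhG$. The natural minimal witness is built by taking one ``central'' hyperedge $\{v_1,\ldots,v_d\}$ together with $\binom{d}{2}$ ``outer'' hyperedges of the form $\{v_i,v_j,u_{ij}^{(1)},\ldots,u_{ij}^{(d-2)}\}$, one per pair $\{i,j\}\subset[d]$, where the $u$-vertices are all chosen to be distinct from one another and from $\{v_1,\ldots,v_d\}$. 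Calling this $\bG$, it has $e_\bG=\binom{d}{2}+1$ hyperedges on $v_\bG=d+\binom{d}{2}(d-2)$ vertices, and by construction the event $\{\bG\subset\rhG\}$ is contained in the failure event of $\cA^*$, so $\Pr(\cA^*(\pG)\ne\rhG)\ge \Pr(\bG\subset\rhG)$.

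Next I apply Lemma~\ref{lem:subgraph_threshold} to $\bG$. The bookkeeping step is to verify that $m(\bG)=e_\bG/v_\bG$, i.e.\ that no proper sub-hypergraph is denser. For a sub-hypergraph containing the central hyperedge and $k$ outer hyperedges, the vertex count is $d+k(d-2)$ (the central hyperedge already contains all $v_i$ and each outer hyperedge contributes exactly $d-2$ fresh $u$-vertices), so the density $(k+1)/(d+k(d-2))$ is strictly increasing in $k$ and is maximized at $k=\binom{d}{2}$, which is $\bG$ itself. Sub-hypergraphs not containing the central hyperedge are handled by a short case check. Together these give
\begin{equation*}
m(\bG)\;=\;\frac{\binom{d}{2}+1}{d+\binom{d}{2}(d-2)}\;=\;\frac{d^2-d+2}{d(d^2-3d+4)}.
\end{equation*}

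Finally, substituting $p=n^{-d+1+\delta}=\Omega_n(n^{-1/m(\bG)})$ and simplifying is a routine arithmetic step that yields $\delta\ge (d^2-d-2)/(d^2-d+2)$, proving the theorem. The only step that demands genuine care is the monotonicity/densest-subhypergraph verification; the rest is driven by the clean identification of the MAP failure condition and its minimal combinatorial witness.
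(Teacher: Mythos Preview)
Your proposal is correct and follows essentially the same route as the paper: define the same witness hypergraph $\bG$ (central hyperedge plus one outer hyperedge per pair $\{i,j\}$), observe that $\bG\subset\rhG$ forces $\cA^*$ to err, and invoke Lemma~\ref{lem:subgraph_threshold} with $m(\bG)=e_\bG/v_\bG$. The only difference is that you explicitly verify the densest-subhypergraph claim $m(\bG)=e_\bG/v_\bG$ (which the paper simply asserts), so your write-up is if anything slightly more complete.
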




\section{Impossibility when $d\le 5$ and $\delta \ge \frac{2d-4}{2d-1}$}\label{sec:impossibility}

Recall that in Lemma~\ref{lem:bad-graph-in-isolation}, we reduced the problem of proving impossibility of exact recovery to finding ambiguous graphs. 
\badgraphinisolation*
In this section we will identify an appropriate ambiguous graph and then use Lemma~\ref{lem:bad-graph-in-isolation} to prove the following theorem.
\begin{theorem}\label{thm:upper}
    When $d=3,4,5$, for any $\delta\ge \frac{2d-4}{2d-1}$, exact recovery is information theoretically impossible.
\end{theorem}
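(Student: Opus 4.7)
\medskip
\noindent\textbf{Proof proposal.}

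The plan is to construct, for each $d\in\{3,4,5\}$, an ambiguous graph $G_a$ generalizing Figure~\ref{fig:ambiguous-graph}, and to verify that at $\delta=(2d-4)/(2d-1)$ the clique-hypergraph $\cli(G_a)$ appears as a 2-connected component of $\cliG$ with probability $\Omega_n(1)$. Lemma~\ref{lem:bad-graph-in-isolation} then yields impossibility at the threshold, and monotonicity (Corollary~\ref{cor:monotone} for $d\in\{4,5\}$, and the separate $d=3$ argument alluded to after Lemma~\ref{lem:monotone}) extends the conclusion to all $\delta\ge(2d-4)/(2d-1)$. Concretely, $G_a$ consists of $d-1$ ``shared'' vertices $u_1,\ldots,u_{d-1}$ together with two ``apex'' vertices $v,v'$, two core $K_d$-cliques $h_1=\{u_1,\ldots,u_{d-1},v\}$ and $h_2=\{u_1,\ldots,u_{d-1},v'\}$ that overlap in the $K_{d-1}$ on $\{u_1,\ldots,u_{d-1}\}$, and, for each of the $2(d-1)$ outer edges $\{u_i,v\}$ and $\{u_i,v'\}$, a pendant $K_d$ equal to the outer edge together with $d-2$ fresh vertices (the fresh vertex sets being pairwise disjoint across pendants). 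This yields $v_{G_a}=(d+1)+2(d-1)(d-2)=2d^2-5d+5$ vertices.

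Every edge of the shared $K_{d-1}$ lies in exactly the two $d$-cliques $h_1,h_2$ of $G_a$, whereas every other edge of $G_a$ lies in a unique $d$-clique (either a pendant or one of $h_1,h_2$); covering the $K_{d-1}$ edges therefore forces at least one of $h_1,h_2$, while each pendant is forced by its outer edge. Consequently $G_a$ is ambiguous, with exactly two minimum preimages $P_1=\{h_1\}\cup\text{(pendants)}$ and $P_2=\{h_2\}\cup\text{(pendants)}$, each of size $2d-1$. A routine check---dropping any pendant loses one hyperedge against $d-2\ge 1$ unique new vertices and hence strictly decreases $e/v$, while subsets of pendants alone have density at most $(2d-2)/(2d^2-5d+5)$---shows that $m(P_1)=e_{P_1}/v_{P_1}=(2d-1)/(2d^2-5d+5)$. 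The identity $d-1-v_{P_1}/e_{P_1}=(2d-4)/(2d-1)$ then shows, via Lemma~\ref{lem:subgraph_threshold}, that $\Pr(P_1\subset\rhG)=\Omega_n(1)$ at $p=n^{-d+1+(2d-4)/(2d-1)}=\Theta_n(n^{-1/m(P_1)})$.

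For $\cli(G_a)$ to form a complete 2-connected component of $\cliG$, no $d$-clique of $\pG$ outside $\cli(G_a)$ may share two vertices with a clique inside. The dominant risk is a direct external $\rhG$-hyperedge through a pair of $G_a$-vertices: since there are only $O_n(1)$ such pairs and each has expected external-hyperedge count $\binom{n-2}{d-2}p=\Theta_n(n^{\delta-1})=o_n(1)$ (because $\delta<1$), these are controlled by a union bound. Indirect extensions---external $d$-cliques of $\pG$ with two vertices in $G_a$ and $d-2$ new vertices outside, whose $\binom{d}{2}-1$ new edges are each covered by some hyperedge of $\rhG$ touching $G_a$ in at most one vertex---have expected count of order $n^{d-2+(\delta-1)(\binom{d}{2}-1)}$, which is $o_n(1)$ if and only if $\delta<(d-1)/(d+1)$. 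This strict inequality holds at our threshold for $d\in\{3,4\}$, while $d=5$ sits exactly on the boundary $(2d-4)/(2d-1)=(d-1)/(d+1)=2/3$ with expected count $\Theta_n(1)$. In all three cases, a Poisson/FKG-type lower bound conditional on $P_1\subset\rhG$ yields $\Pr(\cli(G_a)\text{ is a 2-connected component of }\cliG)=\Omega_n(1)$, and Lemma~\ref{lem:bad-graph-in-isolation} closes the argument at $\delta=(2d-4)/(2d-1)$.

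The hard part will be the boundary case $d=5$: because the expected number of indirect extensions is $\Theta_n(1)$ rather than vanishing, no-extension cannot be concluded by a simple union bound, and we instead need to establish a constant positive lower bound on the probability of zero extensions. The natural tool is a Janson-type inequality (or a careful second-moment computation) using the dependency structure among potential extensions; the subcritical combinatorial growth framework underlying the proof of Lemma~\ref{lem:component-constant-size} should supply the relevant decay, but verifying the correlation bounds exactly on the 2-connectivity threshold requires care.
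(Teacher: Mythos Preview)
Your construction and overall strategy match the paper's: the same ambiguous graph $G_{a,d}$, the same density $m(P_1)=(2d-1)/(2d^2-5d+5)$ via Lemma~\ref{lem:subgraph_threshold}, and the same reduction through Lemma~\ref{lem:bad-graph-in-isolation}. The paper packages the isolation step as Lemma~\ref{lem:branching}, which handles all $2\le k\le d$ and all covering patterns simultaneously via the optimization in Lemma~\ref{lem:cover-bound}; your sketch only treats $k=2$ with the all-size-two covering, so you would still need to verify that this pattern is indeed dominant (it is, but that is precisely the content of Lemma~\ref{lem:cover-bound}). Your FKG/Janson intuition for the $d=5$ boundary is the right one---the paper uses Harris' inequality there.

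There is one genuine gap: for $d=3$ you extend from the single value $\delta=2/5$ to all $\delta\ge 2/5$ by invoking ``the separate $d=3$ argument alluded to after Lemma~\ref{lem:monotone}''. No such monotonicity result is proved independently of Theorem~\ref{thm:main-three} (Lemma~\ref{lem:monotone} requires $d\ge 4$), so this is circular. The paper instead observes that both halves of the argument---appearance of $G_{a,d}$ (Lemma~\ref{lem:bad-graph-contain-probability}) and isolation as a 2-connected component (Lemma~\ref{lem:branching}, Corollary~\ref{cor:bad-graph-probability})---already hold on the entire interval $\frac{2d-4}{2d-1}\le\delta\le\frac{d-1}{d+1}$, giving impossibility directly for $2/5\le\delta\le 1/2$ when $d=3$; the remaining range $\delta\ge 1/2$ is covered by the separate construction in Theorem~\ref{thm:large-d}, whose threshold $\frac{d^2-d-2}{d^2-d+2}$ equals $1/2$ at $d=3$. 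For $d=4,5$ your use of monotonicity is legitimate and completes the proof.
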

\subsection{Ambiguous Graph and Its Properties}

Let us list the properties we need for an ambiguous graph $\aG$ to prove the theorem:
\begin{enumerate}
    \item The graph should be ambiguous, i.e., it should have at least two minimum preimages. We will prove this property in Lemma~\ref{lem:bad-graph-non-unique-preimage}. 
    \item The graph appears in $\pG$ with constant probability, $\Pr(\aG\subset \pG)=\Omega_n(1)$. We will prove this  property in Lemma~\ref{lem:bad-graph-contain-probability}.
    \item The graph appears as a 2-connected component in $\cliG$ with constant probability.
    We will prove this property in Corollary~\ref{cor:bad-graph-probability}.
\end{enumerate}

Recall that ambiguous graph was defined in Defn.~\ref{def:ambiguous}. We will construct a specific such graph.

\begin{definition}[Ambiguous Graph $\aG$]\label{def:aG}
   We define the graph $\aG$ as the union of the following $2d$ cliques:
\begin{itemize}
    \item the clique $u_1,v_1,v_2,\cdots, v_{d-1}$, denoted by $\he_1$,
    \item the clique $u_2,v_1,v_2,\cdots, v_{d-1}$, denoted by $\he_2$,
    \item for any $1\le i\le d-1$, the clique $u_1,v_i,w_i^{(1)},w_i^{(2)},\cdots, w_i^{(d-2)}$, denoted by $\he_i^w$,
    \item and for any $1\le i\le d-1$, the clique $u_1,v_i,z_i^{(1)},z_i^{(2)},\cdots, z_i^{(d-2)}$, denoted by $\he_i^z$.
\end{itemize}
Let $S_1 = \{\he_1^w,\cdots, \he_{d-1}^w\}$ and $S_2 = \{\he_1^z,\cdots, \he_{d-1}^z\}$. See Figure~\ref{fig:ambiguous-graph} for a drawing of the graph when $d=3$. The intuition is to create a set of size $d-1$, $v_1,v_2,\cdots, v_{d-1}$ ($\{2,3\}$ in Figure~\ref{fig:ambiguous-graph}), that can be assigned to two possible hyperedges, both yielding a minimum preimage. 
\end{definition}

Now we prove that this graph satisfies the properties stated above.

\subsection{$\aG$ Satisfies Three Desired Properties}

\paragraph{Property 1: The Graph $\aG$ is Ambiguous.}
 This is shown in the following lemma.
\begin{lemma}\label{lem:bad-graph-non-unique-preimage}
 The graph $\aG$ from Defn.~\ref{def:aG} has two minimum preimages (so it is ambiguous).
\end{lemma}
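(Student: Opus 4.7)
The plan is to exhibit two distinct preimages, $P_1 = \{\he_1\} \cup S_1 \cup S_2$ and $P_2 = \{\he_2\} \cup S_1 \cup S_2$, each of size $2d-1$, and then show no preimage can have fewer hyperedges. Since any hyperedge in a preimage projects to a $d$-clique contained in $\aG$, every candidate hyperedge is a $d$-clique of $\aG$, so the first step is to enumerate all $d$-cliques in $\aG$.

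I would carry this enumeration out by a short case analysis on which of the ``special'' vertices ($u_1$, $u_2$, or one of the $w_i^{(k)}, z_i^{(k)}$) the clique contains. Each vertex $w_i^{(k)}$ is adjacent in $\aG$ only to the vertices inside $\he_i^w$, so every $d$-clique meeting $\{w_i^{(k)}\}$ must equal $\he_i^w$; symmetrically for $z_i^{(k)}$ and $\he_i^z$. Any $d$-clique avoiding all of the $w$'s and $z$'s lies inside $N[u_1] \cap N[u_2]^c = \{u_1,v_1,\dots,v_{d-1}\}$ or inside $N[u_2] \cap N[u_1]^c = \{u_2,v_1,\dots,v_{d-1}\}$ (using that $u_1$ and $u_2$ are non-adjacent in $\aG$), producing only $\he_1$ and $\he_2$. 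Thus the $d$-cliques of $\aG$ are exactly the $2d$ cliques $\he_1,\he_2,\he_1^w,\ldots,\he_{d-1}^w,\he_1^z,\ldots,\he_{d-1}^z$.

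Next, I would identify the forced cliques. For each $i$, the clique $\he_i^w$ is the unique $d$-clique of $\aG$ containing the vertex $w_i^{(k)}$; since the incident edge $(v_i,w_i^{(k)})$ must be covered, $\he_i^w$ lies in every preimage. The same argument forces each $\he_i^z$. This already gives the $2(d-1)$ hyperedges in $S_1 \cup S_2$. The remaining edges to cover are $(u_1,v_i)$, $(u_2,v_i)$, and $(v_i,v_j)$ for $i \neq j$: the first two are already covered by $S_1 \cup S_2$, but $(v_i,v_j)$ appears only inside the $d$-cliques $\he_1$ and $\he_2$, so every preimage must include at least one of these. Consequently every preimage has size $\ge 2(d-1) + 1 = 2d-1$.

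A routine edge-by-edge check confirms that each of $P_1$ and $P_2$ covers every edge of $\aG$, so both are preimages attaining the lower bound $2d-1$. Hence $\aG$ has (at least) two minimum preimages and is ambiguous. The only delicate step is the $d$-clique enumeration, which is the main potential obstacle; but the argument sketched above—using that each $w_i^{(k)}$ and $z_i^{(k)}$ has small closed neighborhood confined to its own branch—rules out any ``cross-branch'' cliques and makes the enumeration entirely mechanical.
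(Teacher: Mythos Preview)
Your argument is correct and follows essentially the same route as the paper's own proof: force $S_1\cup S_2$ because each $w_i^{(j)}$ (resp.\ $z_i^{(j)}$) lies in a unique $d$-clique, then observe that the edges $(v_i,v_j)$ are covered only by $\he_1$ or $\he_2$, giving the lower bound $2d-1$ and the two minimum preimages. You simply flesh out the clique enumeration that the paper leaves implicit.

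One small slip: the set identity $N[u_1]\cap N[u_2]^c=\{u_1,v_1,\dots,v_{d-1}\}$ is false, since each $v_i$ belongs to $N[u_2]$ as well. What you actually want (and clearly intend) is that a $d$-clique avoiding all $w$'s and $z$'s sits inside $\{u_1,u_2,v_1,\dots,v_{d-1}\}$, and since $u_1,u_2$ are non-adjacent it must drop one of them, leaving exactly $\he_1$ or $\he_2$. Fixing the sentence to say this directly removes the only wrinkle.
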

\begin{proof}
Any preimage of $\aG$ must contain hyperedges in $S_1$ and $S_2$ as each $w_i^{(j)}$ ($z_i^{(j)}$) is only included in one clique. To include edges among $v_1,v_2,\cdots, v_{d-1}$, either $\he_1$ or $\he_2$ needs to be included in the preimage. Both $S_1\cup S_2\cup \{\he_1\}$ and $S_1\cup S_2\cup \{\he_2\}$ are valid preimages, so both are minimum preimages for $\aG$.
\end{proof}

\paragraph{Property 2: The Graph $\aG$ Appears with Probability $\Omega_n(1).$}
The next lemma shows that $\aG$ appears in $\pG$ with non-negligible probability using Lemma~\ref{lem:subgraph_threshold}.
\begin{lemma}\label{lem:bad-graph-contain-probability}
Let $\aG$ be as in Defn.~\ref{def:aG}. For any $ \delta\ge \frac{2d-4}{2d-1}$, 
    \[
    \Pr(\aG\subset \pG)=\Omega_n(1)\,.
    \]
\end{lemma}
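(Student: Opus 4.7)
The plan is to apply Lemma~\ref{lem:subgraph_threshold} to the sub-hypergraph
\[
\shG \defeq S_1 \cup S_2 \cup \{\he_1\},
\]
which Lemma~\ref{lem:bad-graph-non-unique-preimage} identifies as a minimum preimage of $\aG$, so that $\proj(\shG) = \aG$ and hence $\{\shG \subset \rhG\} \subseteq \{\aG \subset \pG\}$. I would first count $e_\shG = 1 + 2(d-1) = 2d-1$ hyperedges and, by enumerating the disjoint vertex classes $\{u_1,u_2\}$, $\{v_1,\dots,v_{d-1}\}$, and $\{w_i^{(j)}, z_i^{(j)}\}_{i\in[d-1],\,j\in[d-2]}$, obtain $v_\shG = 2 + (d-1) + 2(d-1)(d-2) = 2d^2 - 5d + 5$. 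Writing $p = n^{-d+1+\delta}$, the condition of Lemma~\ref{lem:subgraph_threshold} for $\Pr(\shG \subset \rhG) = \Omega_n(1)$ becomes $\delta \ge (d-1) - 1/m(\shG)$, and substituting the target $m(\shG) = e_\shG/v_\shG$ yields exactly
\[
\delta \ge (d-1) - \frac{2d^2 - 5d + 5}{2d-1} = \frac{2d-4}{2d-1}.
\]

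The main obstacle is verifying the balancedness $m(\shG) = e_\shG/v_\shG$, i.e.\ that no proper sub-hypergraph of $\shG$ is strictly denser. I would parametrize an arbitrary $\shG' \subset \shG$ by the indicator $k_1 = \mathds{1}[\he_1 \in \shG']$ and the index sets $I_w, I_z \subseteq [d-1]$ of included $\he_i^w, \he_i^z$, and use two structural facts: each $\he_i^w$ (resp.\ $\he_i^z$) contributes $d-2$ ``private'' vertices appearing in no other hyperedge of $\shG$, while the ``core'' vertices $\{u_1,u_2,v_1,\dots,v_{d-1}\}$ contribute at most $d+1$ vertices regardless of how many hyperedges are included. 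A short case analysis on $k_1 \in \{0,1\}$ and $|I_z| \in \{0,\ge 1\}$ then reduces $v_{\shG'}/e_{\shG'}$ to an explicit function of $t \defeq |I_w|+|I_z|$. In the dominant case $k_1=1$, $|I_z| \ge 1$ one finds $v_{\shG'}/e_{\shG'} = (d-2) + 3/(t+1)$, strictly decreasing in $t$ with minimum $(d-2) + 3/(2d-1) = v_\shG/e_\shG$ attained at $t = 2(d-1)$, i.e.\ at $\shG$ itself; the remaining cases give strictly larger ratios (e.g., $(d-2)+2/d$ when $|I_z|=0$, which exceeds $(d-2)+3/(2d-1)$ for $d \ge 3$).

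Once balancedness is in hand, Lemma~\ref{lem:subgraph_threshold} gives $\Pr(\shG \subset \rhG) = \Omega_n(1)$ throughout $\delta \ge (2d-4)/(2d-1)$, and the implication $\shG \subset \rhG \Rightarrow \aG \subset \pG$ transfers this lower bound to $\Pr(\aG \subset \pG)$, as desired.
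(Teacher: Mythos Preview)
Your proposal is correct and follows essentially the same approach as the paper: reduce to $\Pr(\shG\subset\rhG)$ for the preimage $\shG=S_1\cup S_2\cup\{\he_1\}$, invoke Lemma~\ref{lem:subgraph_threshold}, and verify balancedness $m(\shG)=e_\shG/v_\shG=\frac{2d-1}{2d^2-5d+5}$ by a case analysis over which hyperedges are included (the paper packages this last step as Lemma~\ref{lem:density_bad_graph} with the same four-case split in the appendix). One small caveat: in the sub-case $k_1=0$ with both $I_w,I_z$ nonempty, the ratio also depends on $|I_w\cap I_z|$ and is not purely a function of $t$, but your conclusion that it still exceeds $(d-2)+3/(2d-1)$ holds since $\frac{2+|I_w\cup I_z|}{t}\ge \frac{3}{2d-2}>\frac{3}{2d-1}$.
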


\begin{proof}
Let us focus on one possible cover of $\aG$, $S_1\cup S_2\cup \{\he_1\}$,
\[
\Pr(\aG\subset \pG)\ge \Pr(S_1\cup S_2\cup \{\he_1\}\subset \hE_\rhG)\,.
\]
Recall in Lemma~\ref{lem:subgraph_threshold}, the probability of a hypergraph $\shG$ appear as a subgraph of $\rhG$ is described by  
\[
m(\shG) = \max_{\shG'\subset \shG}\frac{e_{\shG'}}{v_{\shG'}}\,.
\]
To use Lemma~\ref{lem:subgraph_threshold}, we need to calculate $m(S_1\cup S_2\cup \{\he_1\})$. The calculation is given in the following lemma.
\begin{lemma}\label{lem:density_bad_graph}
Let $S_1,S_2$, and $h_1$ be as in the Defn.~\ref{def:aG} of graph $G_{a,d}$ above. Then
    \[m(S_1\cup S_2\cup \{\he_1\}) = \frac{2d-1}{2d^2-5d+5}\,.\]
\end{lemma}
The calculation showing the lemma can be found in Appendix~\ref{sec:proof_density_bad_graph}.
Given Lemma~\ref{lem:density_bad_graph}, we have $\Pr(S_1\cup S_2\cup \{\he_1\}\subset \hE_\rhG)=\Omega_n(1)$ when 
\[
p=\Omega_n(n^{-\frac{2d^2-5d+5}{2d-1}})\,,
\]
i.e., when $\delta\ge \frac{2d-4}{2d-1}$. 
\end{proof}

\paragraph{Property 3: The Graph $\aG$ Forms a 2-connected Component with Probability $\Omega_n(1)$.}
What remains to be shown is that with probability $\Omega_n(1)$, not only does $\aG$ appears, but also $\cli(\aG)$ is a 2-connected component. In other words, we want to show that $\cli(\aG)$ has no 2-neighbors in $\cliG$, as shown in the following lemma.

\begin{lemma}\label{lem:branching}
For any $\hE_1\subset \binom{[n]}{d}$ with $|\hE_1|=O_n(1)$,
\[
    \Pr\b(\nei{\cliG}(\cli(\hE_1))\ne \emptyset | \hE_1\subset \hE_{\rhG}\b) =
    \begin{cases}
        O_n(n^{-(\frac{d-1}{d+1}-\delta)}) &\text{if }\delta<\frac{d-1}{d+1}\\
        1-\Omega_n(1) &\text{if }\delta=\frac{d-1}{d+1}
    \end{cases}
    \,.
\]
Recall that $\cli(\hE_1)=\cli(\proj(\hE_1))$.
\end{lemma}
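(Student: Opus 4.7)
The plan is to apply Markov's inequality and then enumerate witnesses. Let $N$ denote the number of hyperedges $\he' \in \binom{[n]}{d}$ such that $\he' \in \cliG$, $\he' \notin \cli(\hE_1)$, and $|\he' \cap \he_0| \geq 2$ for some $\he_0 \in \cli(\hE_1)$. By Markov's inequality, it suffices to bound $\E[N \mid \hE_1 \subset \hE_\rhG]$. Since $|\cli(\hE_1)| = O(1)$ and each hyperedge has $\binom{d}{2}$ vertex pairs, I would union-bound over the anchor $\he_0 \in \cli(\hE_1)$ and the shared pair $\{a,b\} \subset \he_0$ (both $O(1)$ sums), reducing the task to bounding $\sum_{\he' \supset \{a,b\}} \Pr(\he' \in \cliG \mid \hE_1 \subset \hE_\rhG)$ for a fixed anchor.

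To bound each conditional probability, I would enumerate minimal witnesses: subsets $\hE_2 \subset \binom{[n]}{d} \setminus \hE_1$ such that $\proj(\hE_1 \cup \hE_2) \supset \proj(\he')$. Since $\hE_2$ is disjoint from $\hE_1$ and $\rhG$ is a product measure, $\Pr(\hE_2 \subset \hE_\rhG) = p^{|\hE_2|}$, so a union bound gives $\Pr(\he' \in \cliG \mid \hE_1 \subset \hE_\rhG) \leq \sum_{\hE_2} p^{|\hE_2|}$. Grouping configurations by the signature $(v, e) = (|V(\hE_2 \cup \{\he'\}) \setminus V(\hE_1)|, |\hE_2|)$, the number of $(\he',\hE_2)$ of a given type is $O(n^v)$, so the contribution of each type is $O(n^v p^e) = O(n^{v - e(d-1) + e\delta})$ by Lemma~\ref{lem:number-appearance}.

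The key combinatorial step is the density inequality
\[ v \le e \cdot \tfrac{d(d-1)}{d+1} \qquad \text{for every valid configuration,} \]
which is equivalent to $v - e(d-1) + (e-1)\delta \leq -(d-1)/(d+1)$ whenever $\delta < (d-1)/(d+1)$, and hence yields the per-type bound $O(n^{\delta - (d-1)/(d+1)})$. The observation that makes this tractable is that $\hE_2 \cup \{\he'\}$ is always 2-connected via $\he'$: each $\he^* \in \hE_2$ must cover some edge $(x,y)$ of $\he'$ with $(x,y) \neq (a,b)$, so $|\he^* \cap \he'| \geq 2$. If $\he^*$ shares $s_i$ vertices with $\he'$, then it contributes at most $d - s_i$ new vertices (to $v$) and covers at most $\binom{s_i}{2}$ edges of $\he'$, with the constraint $\sum_i \binom{s_i}{2} \geq \binom{d}{2} - 1$. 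A convexity/double-counting argument on this constraint yields the density bound; the extremal configuration for $d=3$ is $\hE_2 = \{\{a,w,x\},\{b,w,y\}\}$ with $v = 3$, $e = 2$, attaining the equality $v/e = d(d-1)/(d+1) = 3/2$.

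Summing over the $O(1)$ relevant structural types (those with $|\hE_2| = O(1)$; larger witnesses are negligible by the same growth argument underlying Lemma~\ref{lem:component-constant-size}) gives $\E[N \mid \hE_1 \subset \hE_\rhG] = O(n^{\delta - (d-1)/(d+1)})$, which settles the first case. For the boundary $\delta = (d-1)/(d+1)$, the expectation is $O(1)$ and not vanishing; I would upgrade this to $\Pr(N = 0) = \Omega(1)$ via a Poisson-type second-moment argument (the events ``$\he'$ is a 2-neighbor'' corresponding to vertex-disjoint witnesses are approximately independent), yielding the $1 - \Omega_n(1)$ bound. The main obstacle is the density inequality: although its spirit is simple (each ``covering move'' costs more in hyperedge probability than it gains in new-vertex choices), a careful case analysis on how $\hE_2$ intersects both $V(\hE_1)$ and $V(\he')$ is required, and this analysis essentially recapitulates the single-step version of the growth-cost accounting developed for Lemma~\ref{lem:component-constant-size}.
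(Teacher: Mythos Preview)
Your approach for the case $\delta < \frac{d-1}{d+1}$ is essentially the paper's: both union-bound over candidate 2-neighbors $\he'$ and over the witness hyperedges covering $\proj(\he')\setminus\proj(\hE_1)$, and both reduce the exponent calculation to the same combinatorial inequality (your density bound $v \le e\cdot d(d-1)/(d+1)$ is exactly Lemma~\ref{lem:cover-bound} rewritten). The only organizational difference is that the paper groups candidates $\he'$ by $k=|\he'\cap V(\hE_1)|$ and uses the mutual independence of the events ``some hyperedge of $\rhG$ meets $\he'$ in the fixed subset $S_i$'' to factor the probability, whereas you enumerate the witness sets $\hE_2$ explicitly; the resulting bounds coincide. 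One small caveat: your stated constraint $\sum_i\binom{s_i}{2}\ge\binom{d}{2}-1$ is only the case $k=2$; to cover all configurations you need the general form $\binom{d}{2}-\binom{k}{2}$, but the convexity argument (the paper's Lemma~\ref{lem:cover-bound}) already handles all $k$ simultaneously.

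At the boundary $\delta=\frac{d-1}{d+1}$, however, the proposed tool is off. A second-moment computation goes the wrong way: Paley--Zygmund lower-bounds $\Pr(N>0)$, not $\Pr(N=0)$, and $\var(N)\approx\E[N]$ does not by itself force a Poisson law. What is actually needed is a correlation inequality. The paper observes that each event ``$\he'\notin\cliG$'' is decreasing in the hyperedge indicators, so by Harris/FKG these events are positively correlated under the product measure (conditioning on $\hE_1\subset\hE_\rhG$ merely fixes finitely many coordinates and preserves the product structure). This immediately gives
\[
\Pr\bigl(N=0 \,\big|\, \hE_1\subset\hE_\rhG\bigr)\;\ge\;\prod_{\he'}\Pr\bigl(\he'\notin\cliG \,\big|\, \hE_1\subset\hE_\rhG\bigr)\;\ge\;\exp\Bigl(-O\bigl(\textstyle\sum_k n^{\,d-k-g_k(\delta)}\bigr)\Bigr)\;=\;\Omega_n(1),
\]
using the same exponent bound you already established in the first case. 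Janson's inequality applied to the expanded witness indicators would also work, but is heavier machinery; the one-line FKG step is the right move here.
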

The proof of the lemma can be found in Appendix~\ref{sec:brahcing}. The proof idea is similar to Lemma~\ref{lem:exp-dec}, where we consider all possible ways for a 2-neighbor to appear.

Since any preimage of $\aG$ has constant size, we can conclude that for any $\hE_a\subset \binom{[n]}{d} $ that is a preimage  of $\aG$, 
\[
\Pr(\nei{\cliG}(\cli(\hE_a))
= \emptyset|\hE_a\subset \hE_\rhG)=\Omega_n(1)\,.
\]
Combining this with Lemma~\ref{lem:bad-graph-contain-probability} yields the following Corollary.
\begin{corollary}\label{cor:bad-graph-probability}
    For any $\frac{2d-4}{2d-1}\le \delta\le \frac{d-1}{d+1}$, 
    \[
    \Pr\b(\cli(\aG)\text{ is a 2-connected component of }\cliG\b)=\Omega_n(1)\,.
    \]
\end{corollary}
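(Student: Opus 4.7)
The plan is to fix a specific preimage of $\aG$ and then combine Lemma~\ref{lem:bad-graph-contain-probability} with Lemma~\ref{lem:branching} through the chain rule of conditional probability. I would take $\hE_a = S_1 \cup S_2 \cup \{\he_1\}$, which has size $2d-1 = O_n(1)$. The proof of Lemma~\ref{lem:bad-graph-contain-probability} actually establishes the stronger statement $\Pr(\hE_a \subset \hE_\rhG) = \Omega_n(1)$ whenever $\delta \ge \frac{2d-4}{2d-1}$, via Lemma~\ref{lem:subgraph_threshold} applied to $\hE_a$ with $m(\hE_a) = \frac{2d-1}{2d^2-5d+5}$. Let $A = \{\hE_a \subset \hE_\rhG\}$, so that on $A$ we have $\aG = \proj(\hE_a) \subset \pG$ and hence $\cli(\aG) = \cli(\hE_a) \subset \cliG$.

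Before invoking Lemma~\ref{lem:branching}, I would verify two quick structural facts about $\aG$. First, $\cli(\aG)$ consists of exactly the $2d$ hyperedges listed in Definition~\ref{def:aG}, because the auxiliary vertices $u_2$, $w_i^{(j)}$, and $z_i^{(j)}$ all have degree too small in $\aG$ to participate in any additional $d$-clique beyond the defining one. Second, these $2d$ hyperedges are 2-connected, since $\he_1$ shares $\{u_1,v_i\}$ with every $\he_i^w$ and $\he_i^z$ and shares $\{v_1,\ldots,v_{d-1}\}$ with $\he_2$.

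Next I would apply Lemma~\ref{lem:branching} with $\hE_1 = \hE_a$ to control the event $B = \{\cli(\aG) \text{ has no 2-neighbor in } \cliG \text{ outside of itself}\}$. In the regime $\frac{2d-4}{2d-1} \le \delta \le \frac{d-1}{d+1}$, the lemma gives either $\Pr(B^c \mid A) = O_n(n^{-(\frac{d-1}{d+1}-\delta)}) = o_n(1)$ when the inequality is strict, or $\Pr(B^c \mid A) = 1 - \Omega_n(1)$ at equality, so $\Pr(B \mid A) = \Omega_n(1)$ throughout. Combining the two estimates,
\[
\Pr\bigl(\cli(\aG) \text{ is a 2-connected component of } \cliG\bigr) \ge \Pr(A \cap B) = \Pr(A)\,\Pr(B \mid A) = \Omega_n(1),
\]
since on $A \cap B$ the set $\cli(\aG)$ is 2-connected, lies inside $\cliG$, and is maximal with respect to 2-connectivity, i.e., a 2-connected component.

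The one subtle point, rather than a genuine obstacle, is a reading of Lemma~\ref{lem:branching}: the neighborhood $\nei{\cliG}(\cli(\hE_1))$ must be interpreted as 2-neighbors lying \emph{outside} $\cli(\hE_1)$, since internal 2-neighbors are automatic from the 2-connectivity of $\cli(\hE_1)$ and would otherwise render the lemma vacuous. With this natural reading, no extra combinatorial work is needed beyond invoking Lemmas~\ref{lem:bad-graph-contain-probability} and~\ref{lem:branching}.
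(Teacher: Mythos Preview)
Your proposal is correct and follows essentially the same approach as the paper: fix the constant-size preimage $\hE_a = S_1\cup S_2\cup\{\he_1\}$, use the proof of Lemma~\ref{lem:bad-graph-contain-probability} to get $\Pr(\hE_a\subset\hE_\rhG)=\Omega_n(1)$, apply Lemma~\ref{lem:branching} to show the conditional probability of having no external 2-neighbor is $\Omega_n(1)$, and multiply. Your additional structural verifications (that $\cli(\aG)$ consists exactly of the $2d$ listed hyperedges and is 2-connected) and your remark on the intended reading of $\nei{\cliG}(\cli(\hE_1))$ are good hygiene that the paper leaves implicit.
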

Now we can prove Theorem~\ref{thm:upper}.

\subsection{Proof of Theorem~\ref{thm:upper}}
We can use Lemma~\ref{lem:bad-graph-in-isolation} by setting the graph $G_a$ to be $\aG$. By Lemma~\ref{lem:bad-graph-non-unique-preimage} and Corollary~\ref{cor:bad-graph-probability}, $\aG$ is graph with two minimum preimages and appears as a 2-connected component of $\cliG$ with constant probability. So for any algorithm $\cA$,
\[
\Pr(\cA(\pG)\ne \rhG)\ge \Omega_n(1)
\]
when $\frac{2d-4}{2d-1}\le \delta\le \frac{d-1}{d+1}$. By monotonicity stated in Lemma~\ref{lem:monotone}, we prove the theorem for $d=4$ and $5$.
And for $d=3$, this shows the impossibility when $2/5\le \delta\le 1/2$. The case of $d=3$, $\delta\ge 1/2$ is already shown in Theorem~\ref{thm:large-d}.
\qed

\section{Threshold of Growth for 2-Connected Components}\label{sec:growth-components}
In this section we prove that for sparse hypergraphs, all 2-connected components have constant size.
\component*

In order to prove the lemma, we formalize the notion of growing a subhypergraph.

\paragraph{Possible 2-neighbors of a sub-hypergraph and Definition of $\grow$.}
In Section~\ref{sec:const-size}, we discussed that the size of 2-connected components can be bounded by examining how 2-connected sub-hypergraphs can grow. Specifically, we will look at a sub-hypergraph $H\subset \rhG$ and the possible ways for $H$ to have a 2-neighbor. 

Suppose $\shG$ is a sub-hypergraph of $\rhG$ and $\cli(\proj(\shG))$ is 2-connected. 
Let $N(\shG)$ 
be the set of all possible 2-neighbors of $\cli(\proj(\shG))$. If $\cli(\proj(\shG))$ is a proper subset of a larger 2-connected hypergraph $\cli(\proj(\shG'))$ for some $\shG'\subset \rhG$, then there must exist $\he \in N(\shG)$ that is in $\cli(\proj(\shG'))$. An example is drawn in Figure~\ref{fig:possible-neighbors}, where $\shG$ only contains one hyperedge $\{1,2,3\}$, and all 3 possible 2-neighbors of $\{1,2,3\}$ forms $N(\shG)$.

\begin{figure}
\centering
\begin{minipage}{.45\textwidth}

\centering
\begin{tikzpicture}[scale=0.50]
    \coordinate (1) at (0,0);
    \coordinate (2) at (4,0);
    \coordinate (3) at (2,3.46);
    \coordinate (4) at (2,-2.46);
    \coordinate (5) at (-1,3.46);
    \coordinate (6) at (5,3.46);

    \fill[green!30] (1.center) -- (2.center) -- (3.center) -- cycle;
    \fill[cyan!30] (1.center) -- (2.center) -- (4.center) -- cycle;
    \fill[cyan!30] (1.center) -- (3.center) -- (5.center) -- cycle;
    \fill[cyan!30] (3.center) -- (2.center) -- (6.center) -- cycle;
    \node[draw, circle, fill=black, scale=0.5,label=left:1] (1) at (0,0) {};
    \node[draw, circle, fill=black, scale=0.5,label=right:2] (2) at (4,0) {};
    \node[draw, circle, fill=black, scale=0.5,label=above:3] (3) at (2,3.46) {}; 
    
    \node[draw, circle, fill=black, scale=0.5,label=below:4] (4) at (4) {}; 
    \node[draw, circle, fill=black, scale=0.5,label=left:5] (5) at (5) {}; 
    \node[draw, circle, fill=black, scale=0.5,label=right:6] (6) at (6) {}; 
    
    \node at (barycentric cs:1=1,2=1,3=1) {$\shG$};
    
    \draw (1) -- (2) -- (4) -- (1);
    \draw (2) -- (3) -- (6) -- (2);
    \draw (1) -- (3) -- (5) -- (1);
\end{tikzpicture}
\caption{An illustration of $N(\shG)$ when $d=3$. Here $\shG$ only has one hyperedge $\{1,2,3\}$, colored in green. $N(\shG)$ contains three possible 2-neighbors of $\cli(\proj(\shG))=\shG$, colored in blue.}
\label{fig:possible-neighbors}
\vspace{12mm}
\end{minipage}%
\hfill
\begin{minipage}{.45\textwidth}
\centering
\begin{tikzpicture}[scale=0.50]
    \coordinate (1) at (0,0);
    \coordinate (2) at (4,0);
    \coordinate (3) at (2,3.46);
    \coordinate (4) at (2,-2.46);
    \coordinate (7) at (-1,-2.46); 
    \coordinate (8) at (5,-2.46); 
    
    \fill[green!30] (1.center) -- (2.center) -- (3.center) -- cycle;
    \fill[yellow!30] (1.center) -- (2.center) -- (4.center) -- cycle; 
    \fill[orange!30] (1.center) -- (4.center) -- (7.center) -- cycle; 
    \fill[orange!30] (2.center) -- (4.center) -- (8.center) -- cycle; 
    
    \node[draw, circle, fill=black,scale=0.5, label=left:1] (1) at (1) {};
    \node[draw, circle, fill=black, scale=0.5,label=right:2] (2) at (2) {};
    \node[draw, circle, fill=black, scale=0.5,label=above:3] (3) at (3) {};
    \node[draw, circle, fill=black, scale=0.5,label=below:4] (4) at (4) {};
    \node[draw, circle, fill=black,scale=0.5, label=left:7] (7) at (7) {};
    \node[draw, circle, fill=black, scale=0.5, label=right:8] (8) at (8) {};
    
    \draw (1) -- (2) -- (4) -- (1);
    \draw (1) -- (4) -- (7) -- (1); 
    \draw (2) -- (4) -- (8) -- (2); 
    
    \node at (barycentric cs:1=1,2=1,4=1) {$h$};
    \node at (barycentric cs:1=1,2=1,3=1) {$\shG$};
    
    \draw[line width=2pt, red] (1) -- node[left] {$S_1$} (4);
    \draw[line width=2pt, red] (2) -- node[right] {$S_2$} (4);
    
    \draw (1) -- (2);
    \draw (1) -- (3);
    \draw (2) -- (3);
\end{tikzpicture}
\caption{An example of an element in $\grow(\shG,\he)$, consists of 3 hyperedges, $\{1,2,3\},\{1,4,7\}\text{ and }\{2,4,8\}$. Here $\shG$ contains one hyperedge $\{1,2,3\}$. $\he=\{1,2,4\}$. For $\he$ to be included in the 2-connected component, one way is to include $S_1^{\shG,\he} = \{1,4\}$ and $S_2^{\shG,\he} = \{2,4\}$ respectively in two hyperedges. }
\label{fig:example-grow}
\end{minipage}
\end{figure}

For $\he\in N(\shG)$ to appear in the 2-connected hypergraph $\cli(\proj(\shG'))$, every edge in $\proj(\he)\backslash \proj(\shG)$ should be covered in at least one hyperedge in $\shG'$.
Now let us examine the possible ways for this to happen.  
Let $\cS^{\shG,\he} \defeq \{S_1^{\shG,\he}, S_2^{\shG,\he}, \cdots, S_m^{\shG,\he}\}$ be the collection of subsets of $\he$ such that $\proj(S_i)\not\subseteq \proj(\shG)$ and $\proj(S_i)\ne \emptyset$. Let $m$ be the number of such subsets, and note that $m\le 2^d$. 
If a hyperedge covers an edge in $\proj(\he)\backslash \proj(\shG)$, it must intersect with $h$ at one of the sets in $\cS^{\shG,\he}$. For any $\he\in N(\shG)$, we define
\[
    \grow(\shG, \he) \defeq \{\shG\cup(\cup_{i\in I}\{\he_i\}): I\subseteq [m], \he_i\cap \he = S_i^{\shG,\he} , \proj(\cup_{i\in I}S_i^{\shG,\he}) = \proj(h)\backslash \proj(\shG)\}\,,
\]
and
\[
\grow(\shG) \defeq  \bigcup_{\he\in N(\shG)}\grow(\shG)\,.
\]
An example of an element in $ \grow(\shG, \he)$ is shown in Figure~\ref{fig:example-grow}.

Any 2-connected component can be achieved by ``growing'' multiple times from a single $d$-hyperedge.
\begin{lemma}\label{lem:grow-contain-all-components}
Assume w.l.o.g. that hyperedge $[d]$ is in a given $\shG$. We have that
\[
\{\shG:\text{$\cli(\proj(\shG))$ is 2-connected}\} = \bigcup_{t\ge 0}\grow^{(t)}([d])\,,
\]
where $\grow^{(t)}$ denotes applying $\grow$ $t$ times.
\end{lemma}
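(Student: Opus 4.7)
My plan is to prove the two set inclusions separately, with the forward direction being a short induction and the reverse requiring an inductive peeling argument whose construction is the main technical step.

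Forward direction $\bigcup_t \grow^{(t)}([d]) \subseteq \{\shG : \cli(\proj(\shG)) \text{ is 2-connected}\}$: induct on $t$, with the base case $\cli(\proj(\{[d]\}))=\{[d]\}$ trivially 2-connected. For the step, assume $\cli(\proj(\shG))$ is 2-connected and take $\shG' \in \grow(\shG, h)$ with $h \in N(\shG)$. The constraint in the definition of $\grow$ guarantees that the added hyperedges cover $\proj(h)\setminus \proj(\shG)$, so $h$ enters $\cli(\proj(\shG'))$. Any new hyperedge in $\cli(\proj(\shG'))\setminus \cli(\proj(\shG))$ must use an edge of $\proj(h)\setminus \proj(\shG)$ and therefore share at least two vertices with $h$, making it 2-adjacent to $h$. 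Combined with $h$'s 2-adjacency to some element of $\cli(\proj(\shG))$ (from $h \in N(\shG)$) and the inductive hypothesis, $\cli(\proj(\shG'))$ is 2-connected.

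Reverse direction: I use strong induction on $|\shG|$. The base $|\shG|=1$ gives $\shG=\{[d]\}\in\grow^{(0)}([d])$. For $|\shG|\ge 2$, the goal is to identify a proper predecessor $\shG' \subsetneq \shG$ containing $[d]$ with $\cli(\proj(\shG'))$ still 2-connected such that $\shG \in \grow(\shG', h^*)$ for some $h^*$; the inductive hypothesis then yields $\shG' \in \grow^{(s)}([d])$ and hence $\shG \in \grow^{(s+1)}([d])$. I construct $(\shG',h^*)$ by taking a spanning tree of the 2-connectivity graph on $\cli(\proj(\shG))$ rooted at $[d]$, choosing $h^*$ to be a leaf of the tree distinct from $[d]$, and setting $\shG' = \shG \setminus T^*$ where $T^*$ is a minimal subset of $\shG$ whose removal eliminates $h^*$ from the clique closure (equivalently, a minimal subset needed to cover the edges of $\proj(h^*)$ that no other hyperedge of $\shG$ covers).

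The required verifications are: (i) $h^* \notin \cli(\proj(\shG'))$, so $h^* \in N(\shG')$; (ii) the parent of $h^*$ in the spanning tree still lies in $\cli(\proj(\shG'))$, supplying the 2-adjacency for $h^* \in N(\shG')$; (iii) $\cli(\proj(\shG'))$ remains 2-connected, relying on $h^*$ being a leaf so that $\cli(\proj(\shG))\setminus\{h^*\}$ is already 2-connected; and (iv) the set $T^* = \shG \setminus \shG'$ decomposes as $\{h_i\}_{i\in I}$ with $h_i \cap h^* = S_i^{\shG', h^*}$ and the resulting $S_i$'s covering $\proj(h^*) \setminus \proj(\shG')$ in the sense demanded by $\grow(\shG', h^*)$. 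The main obstacle I expect is jointly securing (ii) and (iii): removing $T^*$ could inadvertently strip coverage from edges of some other clique $h' \in \cli(\proj(\shG))\setminus\{h^*\}$, dropping $h'$ out of $\cli(\proj(\shG'))$ and possibly disconnecting the remaining structure. Addressing this requires taking $T^*$ to be the smallest possible set (so that every $t\in T^*$ is genuinely necessary to cover some edge in $\proj(h^*)$) and then using the spanning-tree-leaf property to argue that each surviving clique in $\cli(\proj(\shG))\setminus\{h^*\}$ retains at least one covering hyperedge in $\shG'$, so that 2-connectedness of the closure is preserved.
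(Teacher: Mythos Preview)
Your reverse direction takes a genuinely different route from the paper's, and the obstacles you flag in (ii)--(iii) are real and not resolved by your sketch. A concrete failure: take $d=3$ and $\shG=\{\,\{1,2,3\},\{1,2,4\},\{3,4,5\}\,\}$, so $\cli(\proj(\shG))=\{123,124,134,234,345\}$ is $2$-connected. With the spanning tree rooted at $[3]$ given by edges $123\text{--}124$, $123\text{--}134$, $123\text{--}234$, $134\text{--}345$, choose the leaf $h^*=345$. The unique minimal $T^*$ is $\{\{3,4,5\}\}$, giving $\shG'=\{\{1,2,3\},\{1,2,4\}\}$ with $\cli(\proj(\shG'))=\{123,124\}$. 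But $345$ shares only one vertex with each of $123$ and $124$, so $h^*\notin N(\shG')$ and your verification (i) already fails; the parent $134$ of $h^*$ has been killed, so (ii) fails too. If instead you choose the leaf $h^*=124$, the minimal $T^*$ is $\{\{1,2,4\}\}$, and then $\cli(\proj(\shG'))=\{123,345\}$ is not $2$-connected, so (iii) fails. Some leaf choice ($h^*=234$) happens to work in this example, but your proposal gives no rule for selecting a good leaf and no argument that one always exists; minimality of $T^*$ by itself does not protect other cliques from being destroyed.

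The paper avoids all of this by growing \emph{forward} rather than peeling backward. It shows the single step: for any $\shG_1\subsetneq \shG$, there exists $\shG_2\in\grow(\shG_1)$ with $\shG_1\subsetneq \shG_2\subseteq \shG$. Indeed, since $\cli(\proj(\shG))$ is $2$-connected, there is some $h\in\cli(\proj(\shG))$ that is a $2$-neighbor of $\cli(\proj(\shG_1))$; every edge of $\proj(h)\setminus\proj(\shG_1)$ is covered by hyperedges of $\shG$, and picking one covering hyperedge for each intersection pattern $S_i^{\shG_1,h}$ produces $\shG_2$. Iterating from $\shG_0=\{[d]\}$ reaches $\shG$ in finitely many steps since each step strictly enlarges the set. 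Because nothing is ever removed, there is no need to check that $[d]$ stays in the set or that $2$-connectedness survives a deletion, which is exactly where your approach gets stuck.

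Your forward direction is essentially correct (with the minor amendment that a new clique may be $2$-adjacent to one of the added $h_i$'s rather than to $h$ directly, but $h_i$ is itself $2$-adjacent to $h$ via $S_i$, so connectivity still follows); this in fact supplies the containment the paper's own proof does not spell out.
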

\begin{proof}
Choose an arbitrary size-$d$ hyperedge in $\shG$, without loss of generality assume it is $[d]$. We will grow it to $\shG$ by applying $\grow$ multiple times.

Specifically, we will show that given any $\shG_1\subset \shG$, there exists $\shG_2\in \grow(\shG_1)$ such that $\shG_1\subset \shG_2\subseteq \shG$. So $\shG$ can be obtained by growing $[d]$ finite number of times, as any hypergraph in $\grow(\shG_1)$ has more hyperedges than $\shG_1$.

If $\shG_1\subset \shG$, there must exists a $\he\in N(\{\shG_1\})$ in $\cli(\proj(\shG))$. As all edges in $\proj(\he)$ are in $\proj(\shG)$, we can select a set of hyperedges $\hE$ in $\shG$ that covers all edges in $\proj(\he)\backslash \proj(\shG_1)$. 
Let $\hE_i$ be the subset of hyperedges in $\hE$ that intersect with $\he$ at $S^{\shG_1,\he}_i$ 
\[
\hE_i\defeq \{\he'\in \hE: \he' \cap \he = S^{\shG_1,\he}_i\}\,.
\]
Let $\hE'$ be a set of edges by selecting one hyperedge from each non-empty $\hE_i$. We have $\proj(\hE')\supset \proj(\he)\backslash \proj(\shG_1)$.
Therefore, 
\[
\shG_2\defeq  \shG_1\cup\hE'\in  \grow(\shG_1,\he)\,.\qedhere
\]
\end{proof}

\paragraph{Decrease in the Expected Number of Appearances after Growth.}
To bound the probability of a large 2-connected component, we will show that any grow operation decreases the expected number of hypergraphs by a polynomial factor.

\begin{lemma}\label{lem:exp-dec}
Suppose $\delta<\frac{d-1}{d+1}$.
Let $X_\shG$ denote the number of appearance of $\shG$ in $\rhG$. Then for any $\shG$ with $O_n(1)$ number of vertices and any $\shG'\in \grow(\shG)$,
\[
\frac{\E X_{\shG'}}{\E X_{\shG}} = O_n\B(n^{-\b(\frac{d-1}{d+1}-\delta\b)}\B)\,.
\]
\end{lemma}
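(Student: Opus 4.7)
The plan is to reduce the inequality via Lemma~\ref{lem:number-appearance} to a purely combinatorial estimate on the new vertices added during a grow step. Write any $\shG'\in\grow(\shG)$ as $\shG'=\shG\cup\{h_1,\ldots,h_k\}$ for some $h\in N(\shG)$, with $S_i:=h_i\cap h$ and $k\ge 1$. Lemma~\ref{lem:number-appearance} then yields
\[
\frac{\E X_{\shG'}}{\E X_{\shG}}=\Theta_n\b(n^{\Delta v}p^k\b)=\Theta_n\b(n^{\Delta v-k(d-1-\delta)}\b),
\]
where $\Delta v:=v_{\shG'}-v_{\shG}$. Since $k\ge 1$ and $\delta<\tfrac{d-1}{d+1}$, it suffices to prove the structural bound $\Delta v\le \tfrac{kd(d-1)}{d+1}$, whence the exponent is at most $k(\delta-\tfrac{d-1}{d+1})\le\delta-\tfrac{d-1}{d+1}$.

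For the structural bound, set $V_h:=h\cap V(\shG)$, which satisfies $|V_h|\ge 2$ because $h\in N(\shG)$. Any vertex in $V(\shG')\setminus V(\shG)$ lies in some $h_i$, hence is either in $h\setminus V_h$ (of which there are at most $d-|V_h|$) or in $h_i\setminus h$ for some $i$ (contributing at most $d-|S_i|$ per index $i$). Therefore
\[
\Delta v \le (d-|V_h|)+\sum_{i=1}^k(d-|S_i|),
\]
and the target becomes $\sum_i|S_i|\ge (d-|V_h|)+\tfrac{2kd}{d+1}$.

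The main obstacle is this combinatorial lower bound on $\sum_i|S_i|$, which must leverage both $|S_i|\ge 2$ and the covering constraint $\bigcup_i\binom{S_i}{2}\supseteq\binom{h}{2}\setminus\proj(\shG)$ built into the definition of $\grow$. Because any edge in $\proj(\shG)\cap\binom{h}{2}$ has both endpoints in $V_h$, a union bound gives $\sum_i\binom{|S_i|}{2}\ge \binom{d}{2}-\binom{|V_h|}{2}$. The key algebraic fact I will use is the identity $(d+1)s-2\binom{s}{2}=s(d+2-s)\ge 2d$, valid for all integers $2\le s\le d$, with equality exactly at the endpoints $s=2$ and $s=d$. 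Summing this over the $S_i$ and combining with the edge-count bound yields
\[
(d+1)\sum_i|S_i|\ge 2kd + d(d-1)-|V_h|(|V_h|-1) = 2kd + (d-|V_h|)(d+|V_h|-1).
\]
Since $|V_h|\ge 2$ gives $d+|V_h|-1\ge d+1$, the desired $\sum_i|S_i|\ge \tfrac{2kd}{d+1}+(d-|V_h|)$ follows, which closes the plan.
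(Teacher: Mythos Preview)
Your argument is correct, and it takes a genuinely different---and considerably more elementary---route than the paper.

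Both you and the paper start identically: apply Lemma~\ref{lem:number-appearance} and bound $\Delta v \le (d-|V_h|)+\sum_i(d-|S_i|)$. From there the paper rewrites the exponent as $d-|V_h|-g_{|V_h|}(\delta)$, where $g_k(\delta)=\min_{I}\sum_{i\in I}(|S_i|-1-\delta)$ over valid covers, and then invokes a separate Lemma~\ref{lem:cover-bound} to show $\min_k\{g_k(\delta)+k-d\}\ge \tfrac{d-1}{d+1}-\delta$. That lemma is proved by a chain of real relaxations, a change of variables $y_i=\binom{x_i}{2}$, and a concavity argument over a polyhedron that reduces to checking two boundary cases.

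You bypass all of this by proving the clean structural bound $\Delta v\le \tfrac{kd(d-1)}{d+1}$ directly. The only ingredients are the edge-counting inequality $\sum_i\binom{|S_i|}{2}\ge \binom{d}{2}-\binom{|V_h|}{2}$ (immediate from the covering constraint) and the one-line pointwise fact $s(d+2-s)\ge 2d$ for integers $2\le s\le d$. Your factorization $d(d-1)-|V_h|(|V_h|-1)=(d-|V_h|)(d+|V_h|-1)$ together with $|V_h|\ge 2$ then finishes it. This is both shorter and more transparent than the paper's Lemma~\ref{lem:cover-bound}, and in fact your final exponent bound $k(\delta-\tfrac{d-1}{d+1})$ is nominally sharper (by a factor of $k$) than the paper's $\delta-\tfrac{d-1}{d+1}$, though only the $k=1$ case is needed. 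The trade-off is that the paper's modular formulation of $g_k(\delta)$ and Lemma~\ref{lem:cover-bound} is reused elsewhere (in the proof of Lemma~\ref{lem:branching}), so the paper gets some mileage out of isolating it; your approach would have to be redone there.
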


\begin{proof}
By Lemma~\ref{lem:number-appearance}, we have
\[
\E X_{\shG'} = \Theta_n(n^{v_{\shG'}}p^{e_{\shG'}}) \quad\text{and}\quad \E X_{\shG} = \Theta_n(n^{v_{\shG}}p^{e_{\shG}}) \,,
\]
So $\frac{\E X_{\shG'}}{\E X_{\shG}} = \Theta_n (n^{v_{\shG'}-v_{\shG}}p^{e_{\shG'}-e_{\shG'}})$. We bound this by considering all possible ways to grow $\shG$.

Suppose $\shG' \in \grow(\shG,\he)$ where $\he\in N(\shG)$. From the definition of $\grow(\shG,\he)$, let $\shG'\backslash\shG =\{\he_i\}_{i\in I}$ where
\[
\{\he_i\}_{i\in I} \text{ satisfy }   \he_i\cap \he = S_i^{\shG,\he} \text{ and } \proj(\cup_{i\in I}S_i^{\shG,\he}) = \proj(h)\backslash \proj(\shG)\,.
\] 
So $e_{\shG'}-e_{\shG'} = |I|$. The set of nodes in $\shG'$ but not in $\shG$ is given by the set of nodes that are in $\he$ but not $\shG$, which has size $d-k$, and the set of nodes that are in $\he_i$ but not in $\shG$ which has size $d-|S_i^{\shG,\he}|$. Therefore, we have
\[
v_{\shG'}-v_{\shG} \le  d-k+\sum_{i\in I} (d-|S_i^{\shG,\he}|)\,.
\]
We have inequality instead of equality because some vertices may be double-counted. 

Therefore,
\[
\begin{split}
\frac{\E X_{\shG'}}{\E X_{\shG}} &= O_n(n^{d-k+\sum_{i\in I} (d-|S_i^{\shG,\he}|)}p^{|I|})\,.
\end{split}
\]
Using $p=n^{-d+1+\delta}$, we have that for any $\shG'$,
\[
\frac{\E X_{\shG'}}{\E X_{\shG}} =O_n\B(n^{d-k}\cdot\max_{ \substack{I\subset[m]:\\(\proj(\he)\backslash \proj(\shG)) \subset \cup_i\proj(S_i^{\shG,\he})}} n^{-\sum_{i\in I}(|S_i^{\shG,\he}|-1-\delta)}\B)\,.
\]
Suppose $h$ shares $k$ nodes with $V(\shG)$. Here $k$ is at least 2 and at most $d$. 
We know $\proj(\he)\cap \proj(\shG)$ is a subset of a size-$k$ clique in $\he$. So the above expression can be further relaxed to $O_n(n^{d-k-g_k(\delta)})$ where
\begin{equation}\label{eq:gdelta}
g_k(\delta) \defeq \min_{\substack{I\subset [m]:\\ \b(\proj(\he)\backslash \binom{U_\he}{2}\b) \subset \cup_i\proj(S_i^\he)}} \sum_{i\in I}(|S_i^\he|-1-\delta)\,.
\end{equation}
Here $U_\he$ is a size-$k$ subset of $\he$. Here $g_k(\delta)$ does not depend on the choice of $\he$. We will show a a bound on $\min_k \{g_k(\delta)+k-d\}$ in Lemma~\ref{lem:cover-bound} at the end of the section. Given the bound in Lemma~\ref{lem:cover-bound}, we have for any $\delta<\frac{d-1}{d+1}$,
\[
\frac{\E X_{\shG'}}{\E X_{\shG}} = O_n\B(n^{-\b(\frac{d-1}{d+1}-\delta\b)}\B)\,.\qedhere
\]
\end{proof}

\paragraph{Bound on Component Size via Number of Growth Steps.}
Now we are ready to bound the size of 2-connected components.

\begin{proof}[Proof of Lemma~\ref{lem:component-constant-size}]
Let $t= 2(\frac{d-1}{d+1}-\delta)\inv$. 
We will show that with high probability any hypergraph in $\grow^{(t)}([d])$ does not appear in $\rhG$.
First, $X_{[d]} = \binom{n}{d}p = \Theta_n(n^{1+\delta})$. By Lemma~\ref{lem:exp-dec}, we have for any graph $\shG$ in $\grow^{(t)}([d])$,
\[
X_{\shG} =O_n\B(n^{1+\delta}\cdot n^{-t\b(\frac{d-1}{d+1}-\delta\b)}\B) = O_n(n^{\delta-1}) = o_n(1)\,.
\]
By Markov's inequality, $\Pr(\shG\subset \rhG) = o_n(1)$. There are $O_n(1)$ graphs in $\grow^{(t)}([d])$, so by the union bound,
\[
\Pr\b(\shG \subset \rhG\text{ for some } \shG \in\grow^{(t)}([d])\b) = o_n(1)\,.
\]
Note that for any $t'>t$, hypergraphs in $\grow^{(t')}([d])$ contains one of the hypergraphs in $\grow^{(t)}([d])$ and therefore do not appear with high probability.
So by Lemma~\ref{lem:grow-contain-all-components} with high probability, all 2-connected components in $\rhG$ are in 
\[
\bigcup_{i=0}^t \grow^{(t)}([d])\,.
\]
Since the grow operation increases the number of hyperedges by at most $2^d$, with high probability all 2-connected components in $\rhG$ have size at most
\[
1+t2^d
= 1+\frac{2^{d+1}}{\frac{d-1}{d+1}-\delta}\,,
\]
as stated in the lemma. 
\end{proof}

The lemma below shows a bound on $g_k(\delta)$ in \eqref{eq:gdelta} that was used in the proof of Lemma~\ref{lem:exp-dec}.
\begin{lemma}\label{lem:cover-bound}
For any $\delta\le \frac{d-1}{d+1}$,
    \[
    \min_{2\le k\le d,k\in \mathbb{Z}} \{g_k(\delta)+k-d\}\ge \frac{d-1}{d+1}-\delta\,.
    \]
\end{lemma}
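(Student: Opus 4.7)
My plan is to treat $g_k(\delta)$ as the integer optimum of a covering LP over subsets $S\subseteq\he$ and to combine an amortized per-edge lower bound with a case analysis. First I would dispose of the case $k=d$ separately: although the formal covering constraint in the definition of $g_d$ is vacuous (since $\proj(\he)\setminus\binom{U_\he}{2}=\emptyset$), the usage of $g_k$ inside the proof of Lemma~\ref{lem:exp-dec} implicitly requires $I\neq\emptyset$ (the $\grow$ step must add at least one new hyperedge). Hence $g_d \geq \min_{|S|\geq 2}(|S|-1-\delta) = 1-\delta \geq \frac{d-1}{d+1}-\delta$, settling this case.

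For $2\leq k \leq d-1$ I would run an amortization bound. Writing $f(T) = |T\cap U_\he|\cdot |T\setminus U_\he| + \binom{|T\setminus U_\he|}{2}$ for the number of to-be-covered edges inside $\proj(T)$, every valid cover $\{T_i\}$ satisfies $\sum_i f(T_i) \geq E := \frac{(d-k)(d+k-1)}{2}$ (each edge is hit at least once), whence
\[
g_k(\delta) = \sum_i (|T_i|-1-\delta) \;\geq\; \Bigl(\min_T \tfrac{|T|-1-\delta}{f(T)}\Bigr) \cdot E.
\]
The minimum ratio is dictated by the two extremal cases $|T|=2$ (ratio $1-\delta$) and $T=\he$ (ratio $2(d-1-\delta)/[(d-k)(d+k-1)]$). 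A case check on $(|T\cap U_\he|,|T\setminus U_\he|)$ reduces the denominator to $f(T)\leq\binom{|T|}{2}$ (attained when $|T\cap U_\he|\in\{0,1\}$ is admissible), after which unimodality of the one-parameter function $s\mapsto 2(s-1-\delta)/(s(s-1))$ places its interior critical point at $s^\ast = (1+\delta)+\sqrt{\delta(1+\delta)}<3$ under the hypothesis $\delta\leq \frac{d-1}{d+1}$, so over integer $s\in[2,d]$ the minimum occurs at $s=2$ or $s=d$. This yields $g_k(\delta)\geq \min\{d-1-\delta,\,(1-\delta)E\}$.

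Finally I check that both branches imply the lemma bound. For the first branch, $(d-1-\delta)+k-d = k-1-\delta \geq \frac{d-1}{d+1}-\delta$ since $k-1\geq 1 > \frac{d-1}{d+1}$. For the second branch, define $\Phi(\delta) := (1-\delta)E - (d-k) - \bigl(\frac{d-1}{d+1}-\delta\bigr)$; direct substitution gives $\Phi\bigl(\frac{d-1}{d+1}\bigr) = \frac{(d-k)(k-2)}{d+1}\geq 0$ (with equality at $k=2$), while $\Phi'(\delta) = 1 - (d-k)(d+k-1)/2 < 0$ for $d\geq 3$ and $2\leq k\leq d-1$, so $\Phi$ is non-increasing in $\delta$ and hence $\Phi(\delta)\geq \Phi\bigl(\frac{d-1}{d+1}\bigr)\geq 0$ on the entire range. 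The main obstacle will be the case check inside the second paragraph: certifying that intermediate $|T|\in(2,d)$ never beat the two endpoint ratios splits into regimes based on whether $|T|\leq d-k$ (the unconstrained formula) or $|T|>d-k$ (where $|T\cap U_\he|$ is forced positive and a different expression applies), each an elementary monotonicity check but with fiddly bookkeeping, especially at $k=2$ where the inequality is tight at $\delta=\frac{d-1}{d+1}$.
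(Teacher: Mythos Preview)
Your amortization idea is appealing and different from the paper's route (the paper folds $k$ into the cover via $x_0=k$, relaxes $|S_i|$ to reals, substitutes $y_i=\binom{x_i}{2}$, and minimizes a concave function over a simplex), but the step from the relaxation $f(T)\le\binom{|T|}{2}$ to the bound $g_k(\delta)\ge\min\{d-1-\delta,(1-\delta)E\}$ is not valid, and this is the crux, not just ``fiddly bookkeeping''.

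Concretely: your relaxation gives $h(T)\ge \tilde h(|T|)$ with $\tilde h(s)=2(s-1-\delta)/(s(s-1))$, and unimodality of $\tilde h$ on $[2,d]$ (which is true, though your claim $s^\ast<3$ fails for large $d$ and is anyway unnecessary) yields $\min_T h(T)\ge\min\{\tilde h(2),\tilde h(d)\}=\min\{1-\delta,\,2(d-1-\delta)/[d(d-1)]\}$. Multiplying by $E$ gives at best
\[
g_k(\delta)\;\ge\;\min\Bigl\{(1-\delta)E,\;(d-1-\delta)\tfrac{E}{\binom{d}{2}}\Bigr\},
\]
and the second term is strictly smaller than $d-1-\delta$ whenever $k\ge2$, because the relaxation is \emph{not} tight at $s=d$: there $f(\he)=E<\binom{d}{2}$. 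The weaker bound is genuinely insufficient: at $k=2$ and $\delta=0$ it yields $g_2(0)+2-d-\tfrac{d-1}{d+1}\ge (d-1)\tfrac{E}{\binom{d}{2}}+2-d-\tfrac{d-1}{d+1}=-\tfrac{2}{d(d+1)}<0$. (For $d=3$ the relaxed bound gives $g_2(0)\ge 4/3$, but the lemma needs $g_2(0)\ge 3/2$; the true value is $g_2(0)=2$.) The unimodality of the \emph{relaxed} ratio says nothing about where the \emph{actual} ratio $h(T)=(|T|-1-\delta)/f(T)$ attains its minimum.

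To make your approach work you would need to prove directly that for every admissible $T$ one has $h(T)\ge\min\{1-\delta,(d-1-\delta)/E\}$, and this requires analyzing $h$ as a function of $(a,b)=(|T\cap U_\he|,|T\setminus U_\he|)$ with the constraints $a\le k$, $b\le d-k$, $b\ge1$; the one-variable $\tilde h$ is too coarse. That analysis can likely be done (the paper's polytope-vertex argument is effectively one way to package it), but it is the substance of the proof rather than residual bookkeeping.
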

\begin{proof}
Recall that \[
g_k(\delta) \defeq \min_{\substack{I\subset [m]:\\ \b(\proj(\he)\backslash \binom{U_\he}{2}\b) \subset \cup_i\proj(S_i^\he)}} \sum_{i\in I}(|S_i^\he|-1-\delta)\,.
\]
The function takes the minimum of linear functions of $\delta$, so this is a piece-wise linear function. Since every linear function has slope at most $-1$, $g_k(\delta)$ also has slope at most $-1$ in each piece.

Next we will lower bound the value of $g_k(\delta)$ by a series of relaxations on the domain of the optimization.
For any set of $\{S_i^\he\}_{i\in I}$, the union of edges is a superset of all edges in $\proj(\he)\backslash \binom{U_\he}{2}$. So 
we can get a lower bound on $g_k(\delta)$ by relaxing the set of possible $I$ to be the set of cliques with at least $\binom{d}{2}-\binom{k}{2}$  number of edges. Also each clique in the set $I$ that reaches minimum contains a unique edge, so $|I|\le \binom{d}{2}-1$.
\[
\min_{\substack{I\subset [m],|I|\le \binom{d}{2}-1:\\ \sum_{i\in I}\binom{|S_i^\he|}{2} \ge \binom{d}{2}-\binom{k}{2}}} \sum_{i\in I}(|S_i^\he|-1-\delta)
\,.
\]
Substituting $|S_i^\he|$ with $x_i$ and relaxing it to real numbers, we get another lower bound on $g_k(\delta)$:
\[
\min_{M\in \mathbb{Z}^+,M\le \binom{d}{2}-1}
\min_{\substack{x_1,x_2,\cdots, x_M\ge 2:\\ \sum_{i=1}^M\frac{x_i(x_i-1)}{2} \ge \binom{d}{2}-\binom{k}{2}} } \sum_{i=1}^M(x_i-1-\delta)
\,.
\]
So 
\[
\begin{split}
&\min_k \{g_k(\delta)+k-d\}\\
&\ge 
\min_{\substack{M\in \mathbb{Z}^+,\\ M\le \binom{d}{2}-1}}
\min_{2\le k\le d}
\min_{\substack{x_1,x_2,\cdots, x_M\ge 2:\\ \sum_{i=1}^M\frac{x_i(x_i-1)}{2} \ge \binom{d}{2}-\binom{k}{2}} } \B\{\sum_{i=1}^M(x_i-1-\delta)+k-d\B\}\\
& = 
\min_{\substack{M\in \mathbb{Z}^+,\\ M\le \binom{d}{2}-1}}
\min_{\substack{x_0,x_1,\cdots, x_M\ge 2:\\ \sum_{i=0}^M\frac{x_i(x_i-1)}{2} \ge \binom{d}{2}} }\B\{ \sum_{i=0}^M x_i - M(1+\delta)-d\B\}\,.
\end{split}
\]
Here in the equality we substituted $k$ with $x_0$.
By setting $y_i=\frac{x_i(x_i-1)}{2}$, the above can be written as 
\[
\min_{\substack{M\in \mathbb{Z}^+,\\ M\le \binom{d}{2}-1}}
\min_{\substack{y_0,y_1,\cdots, y_M\ge 1:\\ \sum_{i=0}^M y_i \ge \binom{d}{2}} }\B\{ \sum_{i=0}^M \B(\frac{1+\sqrt{1+8y_i}}{2}\B) - M(1+\delta)-d\B\}\,.
\]
For a fixed $M$, this is minimizing a concave function of $y$ over a polyhedron. So the minimum is either at a vertex or infinity. The latter is obviously not the minimum. So the minimum is at a vertex of the following polyhedron:
\[
P\defeq \B\{y:y_i\ge 1, \sum_{i=0}^M y_i \ge \binom{d}{2}\B\}\,.
\]
By symmetry of the function and $P$ under permutation of coordinates, we can consider one of the vertices without loss of generality. Let $y_0=y_1=\cdots=y_{M-1} = 1$, $y_M=\binom{d}{2}-M$, we have that the above is equal to
\[
\min_{\substack{M\in \mathbb{Z}^+,\\ M\le \binom{d}{2}-1}} \B\{ 2M-M(1+\delta)-d+  \frac{1+\sqrt{1+8(d(d-1)/2-M)}}{2}\B\}
\]
The function is concave in $M$, so the minimum is at $M=1$ or $M=\binom{d}{2}-1$. When $\delta=\frac{d-1}{d+1}$ and $M=\binom{d}{2}-1$, the function is 0. When $\delta=\frac{d-1}{d+1}$ and $M=1$,  the function is 
\[
\frac{1+\sqrt{1+8(d(d-1)/2-1)}}{2} -d +\frac{2}{d+1}
\]
One can solve that the roots of this function are $\frac{1}{2}\pm \frac{\sqrt{17}}{2}$ and when $d\ge3>\frac{1}{2}+ \frac{\sqrt{17}}{2}$, this function is always positive.
Therefore, when $\delta=\frac{d-1}{d+1}$, $\min_k \{g_k(\delta)+k-d\} \ge 0$. Since this is a piece-wise function of $\delta$ with slope at most $-1$, we know for any $\delta\le \frac{d-1}{d+1}$, $\min_k \{g_k(\delta)+k-d\} \ge \frac{d-1}{d+1} -\delta$.
\end{proof}

\section{Open Problems}
We list a number of open problems:
\begin{enumerate}
    \item Prove or disprove the conjecture that the threshold $\cdel d$ for $d=4,5$ is at $\frac{2d-4}{2d-1}$.
    \item Instead of having uniform hypergraphs, we can consider general hypergraphs with hyperedges of different sizes. If hyperedges of different sizes appear independently at random, when is exact recovery possible?
    \item Instead of exact recovery, we might aim for only \emph{almost} exact recovery or \emph{partial} recovery, where we want to recover $1-o_n(1)$ or $\Omega_n(1)$ fraction of hyperedges instead of all hyperedges. Where is the optimal threshold for almost exact recovery.
    \item What is the reconstruction threshold for recovery from the similarity matrix rather than the projected graph? In other words, how much does it help to know the number of hyperedges an edge belongs to? 
    \item A problem exhibits a statistical-computational gap if the information theoretically optimal performance cannot be reached by an efficient algorithm. For $d=3,4,5$, we have shown that the information theoretically optimal algorithm is always efficient whenever exact recovery is possible. When $d\ge 6$ what is the threshold $\cdel d$? If the threshold is above $\frac{d-1}{d+1}$, 
    is there a statistical-computational gap? In other words, is it still possible to efficiently achieve exact recovery for any $\delta\le \cdel d$?
    \item If we consider the random hypergraph as a random bit string of length $\binom{n}{d}$, the projected graph as a bit string of length $\binom{n}{2}$. The operator $\proj$ can be viewed as an information channel. How much entropy is lost with the projection operation $\proj$?
    \item How well does the greedy algorithm (Algorithm~\ref{alg:greedy}) work? What is the threshold for the greedy algorithm to exactly recover the original hypergraph?
    \item The running time of Algorithm~\ref{alg:map} depends exponentially on $\b(\frac{d-1}{d+1}-\delta\b)\inv$. Is this dependence necessary?
\end{enumerate}

\bibliographystyle{alpha}
\bibliography{ref}

\appendix

\section{Computer-Assisted Proof of Small Subgraph Preimage Uniqueness}\label{sec:algo-proof}


Recall that as discussed in Section~\ref{sec:main-idea-reconstruction}, in order to prove Theorem~\ref{thm:delta-lower}, all we need to do is check the non-existance of ambiguous graphs. Specifically, we need to prove the following.
\lowerbound*

In this section, we provide a proof for the claim, with computer assistance.
\paragraph{Depth First Search (DFS) over Hypergraphs.}
First, instead of searching over graphs $G_a$, we can search over preimages of the graphs, i.e., hypergraphs. The claim in Lemma~\ref{lem:delta-lowerbound} is equivalent to 
\begin{gather*}
    \text{For any sub-hypergraph $ \shG$ where $\cli(\proj(\shG))$ is 2-connected and  $\Pr(\shG\subset \rhG)=\Omega_n(1)$,}\\
\text{$\shG$ has unique minimum preimage.}
\end{gather*}
We will prove a sufficient condition for the claim to hold by replacing $\Pr(\shG\subset \rhG)=\Omega_n(1)$ with $\E X_{\shG}=\Omega_n(1)$.
Therefore, we want to search over all hypergraphs $ \shG$ where $\cli(\proj(\shG))$ is 2-connected and  $\E X_{\shG}=\Omega_n(1)$. If all such graphs have unique minimum preimage, then Lemma~\ref{lem:delta-lowerbound} is proven.

Lemma~\ref{lem:grow-contain-all-components} implies that it suffices to consider $\grow^{(t)}([d])$ for $t=1,2,\ldots$ in order to capture all $\shG$ where $\cli(\proj(\shG))$ is 2-connected. 
Based on $\grow$, we define a depth-first-search tree $T$ where nodes of the tree are hypergraphs from $\cup_{t\ge 1}\grow^{(t)}([d])$. The root of $T$ is a hypergraph with a single $d$-hyperedge $[d]$. For any node $\shG$ on the tree, its children is all hypergraphs in $\grow(\shG)$, as shown in Figure~\ref{fig:DFS}.
We therefore start with a hypergraph with a single hyperedge and perform a depth first search over the tree.

The depth of the search is also bounded by a $2/(\frac{d-1}{d+1}-\delta)=O_n(1)$, as Lemma~\ref{lem:exp-dec} tells us that each growth step decreases the expected number of appearances by a polynomial factor. 

\begin{figure}
    \centering
\begin{tikzpicture}[every node/.style={draw,circle}, 
                    level distance=2cm,
                    level 1/.style={sibling distance=6cm},
                    level 2/.style={sibling distance=3cm}]
    \node (G1) {$\shG_1$}
    child { node (G2) {$\shG_2$}
        child { node {$\shG_4$} }
        child { node {$\shG_5$} }
    }
    child { node (G3) {$\shG_3$}
        child { node {$\shG_6$} }
        child { node {$\shG_7$} }
    };

    \begin{scope}[on background layer]
    \node[ellipse, draw=none, fit=(G2) (G3) , inner sep=1mm,fill=violet!30] {$\grow(\shG_1)$};
    \end{scope}

\end{tikzpicture}
    \caption{Depth First Search Tree over Graphs. The children of any hypergraph $\shG$ is the set $\grow(\shG)$.}
    \label{fig:DFS}
\end{figure}
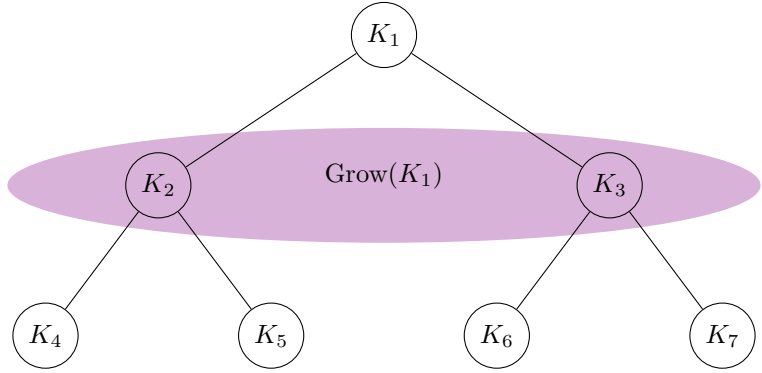

\paragraph{Pruning by Bounding the Expected Number of Appearances.}
The benefit of using the structure of $\grow$ to search instead of an arbitrary order is that we can do the following pruning.
By Lemma~\ref{lem:exp-dec}, during the depth first search, children always have a smaller expected number of appearances than the parent.
So if we reach a hypergraph $\shG$ with $o_n(1)$ expected number of appearance, we can stop the search on this branch in the depth first search tree, as any children of the graph will also have $o_n(1)$ expected number of appearances. 


\paragraph{Improve the Root of the DFS Tree.}
We use the lemma below to further narrow down the search.
Given Lemma~\ref{lem:ambiguous-graph-has-connected-preimage}, instead of searching from $[d]$, we can start the search from \emph{two} hyperedges that overlap on at least 2 vertices. This turns out to dramatically decrease the depth of the search. 

\begin{lemma}\label{lem:ambiguous-graph-has-connected-preimage}
    Fix $\delta<\frac{2d-4}{2d-1}$. If $G_a$ is an ambiguous graph, then either
    \begin{itemize}
        \item $\Pr(G_a\subset \pG)=o_n(1)$ or 
        \item one of the minimum preimages of $G_a$ contains two hyperedges that share two vertices.
    \end{itemize}
\end{lemma}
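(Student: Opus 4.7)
The plan is to prove the contrapositive: assuming $G_a$ is ambiguous and every minimum preimage is \emph{linear} (no two hyperedges share two or more vertices), show that $\Pr(G_a \subset \pG) = o_n(1)$ when $\delta < \frac{2d-4}{2d-1}$.

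First I would fix two distinct linear minimum preimages $H_1, H_2$ of $G_a$, both of size $k$, and form the \emph{ambiguous core} $H_a := H_1 \setminus H_2$ and $H_b := H_2 \setminus H_1$. Linearity of each $H_i$ partitions $E(G_a)$ into edges covered by $H_1 \cap H_2$ and edges covered by $H_a$ (resp.\ $H_b$), so $\proj(H_a) = \proj(H_b) =: G^{**} \subseteq G_a$. Both $H_a$ and $H_b$ are then disjoint linear minimum preimages of $G^{**}$, of common size $k' = |H_1 \setminus H_2|$ on $v'' := |V(G^{**})|$ vertices; crucially, $H_a \subseteq H_1$.

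Since $H_a \subseteq H_1$, we have $m(H_1) \geq e_{H_a}/v_{H_a} = k'/v''$. Lemma~\ref{lem:subgraph_threshold} then gives $\Pr(H_1 \subset \rhG) = o_n(1)$ provided $m(H_1) > 1/(d-1-\delta)$. Under $\delta < \frac{2d-4}{2d-1}$ we have $d-1-\delta > \frac{2d^2-5d+5}{2d-1}$, so it suffices to establish the density bound
\[
\frac{k'}{v''} \;\geq\; \frac{2d-1}{2d^2-5d+5}.
\]
The same argument applies symmetrically to every linear minimum preimage of $G_a$ (pair it with any other linear preimage to form an ambiguous core), so a union bound $\Pr(G_a \subset \pG) \leq \sum_H \Pr(H \subset \rhG)$ over the finitely many minimum preimages gives $o_n(1)$; non-minimum covers contribute lower order since each additional hyperedge costs an extra factor of $p \cdot n^{d-2} = n^{-(1-\delta)} = o_n(1)$.

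To prove the density bound, I would analyze the local swap structure: fix $\he_0 \in H_a$. By linearity of $H_b$, the $\binom{d}{2}$ edges of $\proj(\he_0)$ are partitioned among hyperedges $\he'_1, \ldots, \he'_r \in H_b$ with $s_i := |\he'_i \cap \he_0| \in \{2, \ldots, d-1\}$ and $\sum_i \binom{s_i}{2} = \binom{d}{2}$. Each $\he'_i$ introduces $d - s_i$ new vertices outside $\he_0$, with sharing among them constrained by linearity of $H_b$; because every new vertex lies in $V(G^{**}) = V(H_a)$, it must also be absorbed by some hyperedge of $H_a \setminus \{\he_0\}$. Tracking vertex-to-hyperedge growth under these constraints (possibly inducting on $k'$, with base case given by the octahedron $K_{2,2,2}$ for $d=3$ where $k'/v'' = 4/6 = 2/3 \geq 5/8$) yields the density bound; the extremal configuration $s_i = 2$ for all $i$ matches the density $(2d-1)/(2d^2-5d+5)$ attained by $S_1 \cup S_2 \cup \{\he_1\}$ in Lemma~\ref{lem:density_bad_graph}. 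The hard part is this final combinatorial step: precisely controlling, across all $(s_1, \ldots, s_r)$ configurations, how many absorbing hyperedges of $H_a$ are forced and how few vertices they can share while $H_a$ remains linear. For $d \leq 5$ (the regime of Theorem~\ref{thm:delta-lower}) the bound can alternatively be checked by finite enumeration, echoing the computer-assisted strategy of Section~\ref{sec:algo-proof}.
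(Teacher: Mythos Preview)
Your setup---taking two minimum preimages $H_1, H_2$, forming $H_a = H_1 \setminus H_2$ and $H_b = H_2 \setminus H_1$, and using linearity of $H_1$ to conclude $\proj(H_a) = \proj(H_b)$---matches the paper exactly. The divergence is in how you bound the density $k'/v''$ of $H_a$: you aim for $k'/v'' \ge (2d-1)/(2d^2-5d+5)$ via an unfinished analysis of local swap configurations, whereas the paper obtains the \emph{stronger} bound $k'/v'' \ge 2/d$ in two lines by a degree count that you are missing.

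Here is the paper's observation. Since $\proj(H_a) = \proj(H_b)$, we have $V(H_a) = V(H_b)$. Fix any $v \in V(H_a)$ and some hyperedge $h \in H_b$ containing $v$. The $d-1$ edges from $v$ to $h \setminus \{v\}$ lie in $\proj(H_b) = \proj(H_a)$ and so are covered by hyperedges of $H_a$, each of which necessarily contains $v$. If a single $h' \in H_a$ covered all of them, then $h' \supseteq h$, forcing $h' = h \in H_b$ and contradicting $H_a \cap H_b = \emptyset$. Hence $v$ lies in at least two hyperedges of $H_a$; summing hyperdegrees gives $2v'' \le d k'$. This yields $\E X_{H_a} = O_n(n^{dk'/2} p^{k'}) = O_n(n^{k'(1+\delta - d/2)}) = o_n(1)$ whenever $\delta < d/2-1$, and since $(2d-4)/(2d-1) < (d-2)/2$ for all $d \ge 3$, the lemma's hypothesis is covered with room to spare.

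Your target density was therefore never tight (indeed $2/d > (2d-1)/(2d^2-5d+5)$ for all $d \ge 3$), and your own $d=3$ base case $K_{2,2,2}$, with density $4/6 = 2/d$, already realizes the general bound. The ``hard combinatorial step'' and the partition-by-$s_i$ analysis are unnecessary; note too that the density argument uses only disjointness $H_a \cap H_b = \emptyset$, not linearity of $H_a$ itself. Separately, your bookkeeping for non-minimum covers (``each additional hyperedge costs $p \cdot n^{d-2}$'') miscounts vertices: an extra hyperedge supported on $V(G_a)$ contributes a factor $p$, not $p\,n^{d-2}$. The paper is also terse on the passage from $\E X_{H_a}=o_n(1)$ back to $\Pr(G_a\subset\pG)=o_n(1)$, so this step warrants care in either approach.
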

\begin{proof}
Let $G_a$ be an ambiguous graph and for every minimum preimage of $G_a$, any two hyperedges in the graph share at most one vertex. 

Let $\shG_1$ and $\shG_2$ be two minimum covers of $G_a$. Let $\hE_c = \hE_{\shG_1}\cap \hE_{\shG_2}$, $\hE_1 = \hE_{\shG_1}\backslash \hE_{\shG_2}$ and $\hE_2 = \hE_{\shG_2}\backslash \hE_{\shG_1}$. Then we have $\hE_{\shG_1} $ is partitioned to $\hE_c$ and $\hE_1$, $\hE_{\shG_2} $ is partitioned to $\hE_c$ and $\hE_2$. $\hE_1\cap \hE_2=\emptyset$.
Since any two hyperedges in the graph share at most one vertex, $\proj(\hE_c)\cap \proj(\hE_1) = \emptyset$. So
\[
\proj(\hE_1) = G_a\backslash \proj(\hE_c) = \proj(\hE_2)\,.
\]
We will show that $\proj(\hE_1)$ appears in $\pG$ with $o_n(1)$ probability, then by Lemma~\ref{lem:exp-dec}, $G_a$ also appears in $\pG$ with $o_n(1)$ probability.

Suppose $|\hE_1| = k$, then so is $|\hE_2|$, as $\shG_1$ and $\shG_2 $ have the same size. The total degree of $\hE_1$ is therefore $dk$. For any node $v\in V(\hE_1)$, $v$ is in one of the hyperedges $\he$ in $\hE_2$. There are $d-1$ edges in $\proj(\hE_1)$ between $v$ and other nodes in $\he$. All $d-1$ edges are included in some hyperedges in $\hE_1$. But they cannot be in a single hyperedge in $\hE_1$, otherwise that hyperedge would be $\he$, contradicting with $\hE_1\cap \hE_2=\emptyset$. So $v$ has degree at least 2 in $\hE_1$. Therefore, 
\[
v_{\hE_1} \le \frac{dk}{\text{minimum degree}} \le dk/2\,.
\]
So by Lemma~\ref{lem:number-appearance}, 
\[
\Pr(\hE_1\subset \rhG) \le \E X_{\hE_1} = \Theta_n(n^{v_{\hE_1}}p^{e_{\hE_1}}) = O_n(n^{dk/2}p^k) = O_n(n^{-d/2+1+\delta})\,.
\]
Using $\delta<\frac{2d-4}{2d-1}$, the above is $O_n(n^{2-d/2-3/(2d-1)})$. When $d=3$, this is $O_n(n^{-1/10}) = o_n(1)$. When $d\ge 4$, this is $O_n(n^{-3/(2d-1)}) = o_n(1)$.
\end{proof}

\paragraph{The Search Algorithm}
The algorithm is given as Algorithm~\ref{alg:search}.
Lines 14 to 16 enumerates two hyperedges that overlap on at least 2 vertices and starts the DFS search from this hypergraph. The first input of the procedure is the hypergraph itself, the second input of the procedure is the expected number of the hypergraphs in $\rhG$. For two hyperedges that overlap on $k$ vertices, the expectation is $\Theta_n(n^{2d-k}p^{2})$, by Lemma~\ref{lem:number-appearance}.

In the procedure DFS, Lines 3 to 8, we check whether the graph is ambiguous. Lines 9 and 10 examine the expected number of the current graph $\shG$ in $\rhG$. If it is $o_n(1)$, this branch of the search can be pruned as any graph growing from $\shG$ would have vanishing probability of appearing. Line 11 to Line 13 continues to search from all graphs in $\grow(H)$.
\paragraph{Results of Algorithm~\ref{alg:search}.}
So far we have shown that Algorithm~\ref{alg:search} will find all  hypergraphs $\shG$ such that $\E X_{\shG}=\Omega_n(1)$ and $\proj(\shG)$ is ambiguous. What remains is running the code. 

When $d=3$ and $\delta=2/5$, the algorithm only finds one ambiguous graph, $\aG$. For $d=4,5$ and $\delta=1/2$, the algorithm did not find any ambiguous graph. Thus we proved Lemma~\ref{lem:delta-lowerbound}.

Unfortunately despite our efforts to optimize the search algorithm, it still takes a rather long time when $d\ge 4$ and $\delta>1/2$. We leave it as an open problem to determine the correct threshold $\delta^*$ when $d=4,5$. We conjecture that the threshold matches the upper bound $\frac{2d-4}{2d-1}$.

\begin{algorithm}
\caption{Search Algorithm}\label{alg:search}
\begin{algorithmic}[1] 
\State Input: $d,p=n^{-d+1+\delta}$.


\Procedure{DFS}{a hypergraph, $\shG$; and the expected number of $\shG$ in $\rhG$, $U$}

\For{$k=1,2,\cdots$}
\State Find all preimages of $\proj(\shG)$ with $k$ hyperedges.
\If{One such preimage is found}
\State \textbf{continue}
\EndIf
\If{Two such preimage is found}
\State \textbf{output:} An ambiguous graph $\proj(\shG)$ is found.
\EndIf
\EndFor

\If{$U=O_n(1)$}
\State \textbf{return}
\EndIf
\For{$\he\in N(\shG)$}
\For{$\shG'\in \grow(\shG,\he)$}
\State DFS($\shG'$,$ U\cdot n^{v_{\shG'}-v_{\shG}}p^{e_{\shG'}-e_{\shG}}$)
\EndFor
\EndFor
\EndProcedure
\For{$k=2,\cdots,d-1$} 
    \State Let $\shG_k$ be the graph with two hyperedges that share $k$ vertices.
    \State DFS($\shG_k$, $n^{2d-k}p^2$).
\EndFor
\end{algorithmic}
\end{algorithm}

\section{Analysis of the Maximum Clique Cover Algorithm}\label{sec:naive}
In this section we will prove Theorem~\ref{thm:naive}.
Specifically, we want to show that the event,
\[
\text{a size-$d$ clique $\he$ appear in $\pG$ but is not a hyperedge in $\rhG$},
\]
happens with probability $o_n(1)$ when $\delta<\frac{d-3}{d}$ but happens with at least constant probability when $\delta\ge \frac{d-3}{d}$.

Let us prove the two claims respectively in the following two lemmas. 

\begin{figure}[ht]
\centering
\begin{tikzpicture}
\coordinate (A) at (0,0);
\coordinate (B) at (2,0);
\coordinate (C) at (1,1.73);
\coordinate (D) at (-1,-1.73);
\coordinate (E) at (1,-1.73);
\coordinate (F) at (3,-1.73);  
\fill[fill=green!20] (A)--(B)--(C);
\fill[fill=green!20] (B)--(E)--(F);
\fill[fill=green!20] (A)--(D)--(E);

\node[draw, circle, scale=0.5, fill=black, label=left:$v$] at (A) {};
\node[draw, circle, fill=black, scale=0.5, label=right:$u_1$] at (B) {};
\node[draw, circle, fill=black,  scale=0.5,label=left:$w_1^{(1)}$] at (C) {};
\node[draw, circle, fill=black,  scale=0.5,label=left:$w_2^{(1)}$] at (D) {};
\node[draw, circle, fill=black,  scale=0.5,label=below:$u_2$] at (E) {};
\node[draw, circle, fill=black, scale=0.5,label=right:$u_3$] at (F) {};

\draw (A)--(B)--(C)--cycle;
\draw (A)--(D)--(E)--cycle;
\draw (B)--(E)--(F)--cycle;
\end{tikzpicture}
\caption{Illustration of $\naH$ when $d=3$.}
\label{fig:fake-hyperedge}
\end{figure}

\begin{lemma}
    When $\delta\ge \frac{d-3}{d}$, 
    \[
    \Pr\B(\exists \he\in\binom{[n]}{d} \text{ s.t. } \he  \text{ is a clique in $\pG$ but } \he\notin \hE_\rhG\B)=\Omega_n(1)\,.
    \]
\end{lemma}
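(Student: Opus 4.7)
The plan is to construct a small ``gadget'' sub-hypergraph $\naH$ that, whenever it embeds into $\rhG$, automatically creates a $d$-clique in $\pG$ whose vertex set is a non-hyperedge of $\rhG$. Concretely, generalizing Figure~\ref{fig:fake-hyperedge}, let $\naH$ consist of the following hyperedges on vertex set $\{v,u_1,\dots,u_d\}\cup\{w_i^{(j)}:1\le i\le d-1,\,1\le j\le d-2\}$:
\begin{itemize}
\item the ``big'' hyperedge $\{u_1,u_2,\dots,u_d\}$, and
\item for each $i\in[d-1]$, the ``covering'' hyperedge $\{v,u_i,w_i^{(1)},\dots,w_i^{(d-2)}\}$.
\end{itemize}
So $\naH$ has $e_\naH=d$ hyperedges and $v_\naH=1+d+(d-1)(d-2)=d^2-2d+3$ vertices. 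A direct check shows that every pair inside $\{v,u_1,\dots,u_{d-1}\}$ is covered by $\naH$: pairs $(u_i,u_j)$ lie in the big hyperedge, and $(v,u_i)$ lies in the $i$-th covering hyperedge. Hence $\{v,u_1,\dots,u_{d-1}\}$ is a $d$-clique in $\proj(\naH)$, and it is distinct from every hyperedge of $\naH$ (the big one contains $u_d$ but not $v$; each covering one contains some $w_i^{(j)}$).

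Next I will invoke Lemma~\ref{lem:subgraph_threshold} to show that $\naH\subset\rhG$ with probability $\Omega_n(1)$. First I compute $m(\naH)$. The candidate subhypergraphs to check are the full $\naH$, the big hyperedge together with $k$ coverings, and $k$ coverings alone; a short calculation (cross-multiplying) shows that the ratio $\frac{d}{d^2-2d+3}$ attained by the full $\naH$ strictly dominates all proper sub-hypergraphs, so $m(\naH)=\frac{d}{d^2-2d+3}$. Substituting $p=n^{-d+1+\delta}$, one finds
\[
-d+1+\delta\ \ge\ -\frac{d^2-2d+3}{d}=-\frac{1}{m(\naH)}\quad\Longleftrightarrow\quad \delta\ge\frac{d-3}{d},
\]
so by Lemma~\ref{lem:subgraph_threshold} we get $\Pr(\naH\subset\rhG)=\Omega_n(1)$ in the claimed regime.

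To finish, I pick a particular copy of $\naH$ in $[n]$ and let $A$ be the event that this copy lies in $\rhG$, and $B$ the event that $\{v,u_1,\dots,u_{d-1}\}$ is \emph{not} a hyperedge of $\rhG$. Because the hyperedge $\{v,u_1,\dots,u_{d-1}\}$ is distinct from all hyperedges of $\naH$, the independence of hyperedge indicators gives $\Pr(A\cap B)=\Pr(A)\cdot(1-p)=\Omega_n(1)$. On the event $A\cap B$, the $d$-set $\{v,u_1,\dots,u_{d-1}\}$ is a clique in $\pG$ but is not a hyperedge of $\rhG$, which proves the lemma.

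The main obstacle I anticipate is the maximum-density computation, i.e., verifying that $m(\naH)=\frac{d}{d^2-2d+3}$: one must systematically rule out all sub-hypergraphs of $\naH$, but since $\naH$ has bounded structure (one big hyperedge plus a star of coverings sharing the apex $v$), a short case analysis on ``how many coverings and whether the big hyperedge is included'' handles every case, each reducing to a linear inequality in $d$ that holds for $d\ge 3$.
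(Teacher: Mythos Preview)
Your construction of $\naH$ and the verification that $m(\naH)=\frac{d}{d^2-2d+3}$ match the paper exactly, and invoking Lemma~\ref{lem:subgraph_threshold} correctly gives $\Pr(\naH\subset\rhG)=\Omega_n(1)$ for $\delta\ge\tfrac{d-3}{d}$. However, your final step has a genuine gap. When you ``pick a particular copy of $\naH$ in $[n]$'' and let $A$ be the event that this fixed copy lies in $\rhG$, you have $\Pr(A)=p^{d}$, which at the threshold $\delta=\tfrac{d-3}{d}$ equals $n^{-(d^2-2d+3)}=o_n(1)$, not $\Omega_n(1)$. What Lemma~\ref{lem:subgraph_threshold} actually delivers is that \emph{some} copy appears with probability $\Omega_n(1)$; it says nothing about any particular fixed copy. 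And if you instead let $A$ be ``some copy appears'', then $B$ is no longer a well-defined event (center of which copy?), and the independence argument collapses.

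The paper avoids this by folding the non-hyperedge constraint into the count from the start: it lets $I_i$ be the indicator that the $i$-th labeled copy of $\naH$ lies in $\rhG$ \emph{and} its center $d$-set is not a hyperedge, and runs the second-moment method directly on $X_\naH=\sum_i I_i$. The non-hyperedge constraint only introduces negatively correlated pairs (when the forbidden center of one copy coincides with a hyperedge of another), which may be dropped in the variance upper bound; the remaining computation is then identical to the proof of Lemma~\ref{lem:subgraph_threshold}. A partial patch closer to your black-box route is to let $Z$ count copies of $\naH$ whose center \emph{is} a hyperedge, note $\E Z=\Theta(n^{v_\naH}p^{d+1})$, and use Markov whenever this is $o(1)$ (e.g.\ at $\delta=\tfrac{d-3}{d}$ it equals $\Theta(p)$) to conclude that a copy with non-hyperedge center exists with probability $\Omega_n(1)$. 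But this Markov step breaks down for $\delta$ close to $1$ when $d\in\{3,4\}$, so to cover the full range $\delta\ge\tfrac{d-3}{d}$ you still need the direct second-moment argument.
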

\begin{proof}
We will focus on a sufficient condition that $\he = \{v,u_1,\cdots,u_{d-1}\}$ is a clique but does not appear in  $\hE_\rhG$. Then we will use second moment method to lower bound the probability of that sufficient condition happening at one of the hyperedges. The sufficient condition is that $\he\notin \hE_\rhG$ and all of the following hyperedges are in $\hE_\rhG$:
\begin{itemize}
    \item $\{u_1,u_2,\cdots,u_d\}$ and
    \item $\{v,u_i,w_i^{(1)},\cdots, w_i^{(d-2)}\}$ for all $1\le i\le d-1$.
\end{itemize}
Let $\naH$ denote the hypergraph, see Figure~\ref{fig:fake-hyperedge} for an illustration (we abuse notation and let $\naH$ include the non-hyperedge $\he$).
There are $v_\naH=d^2-2d+3$ nodes  and $e_\naH=d$ edges in $\naH$. $p=n^{-d+1+\delta} =\Omega_n (n^{-d+2+3/d}) = \Omega_n (n^{-v_\naH/e_\naH})$.
Let $\naH_1,\naH_2,\cdots,\naH_t$ be all copies of such sub-hypergraph on the complete graph of $[n]$, we have
\[
t=\binom{n}{v_\naH}\frac{(v_\naH)!}{\aut(\naH)}=\Theta_n(n^{v_\naH})\,.
\]
Here $\aut(\naH)$ is the number of automorphisms of $\naH$.
Let $I_i$ be the indicator that $\naH_i$ is in $\rhG$. And $X_\naH = \sum_{i=1}^tI_i$ be the number of such event happening. We have
\[
\E [X_\naH] = tp^{e_\naH}(1-p) = \Theta_n(n^{v_\naH}p^{e_\naH})\,.
\]
And 
\[
\var(X_\naH) = \sum_{i=1}^t\sum_{j=1}^t \cov(I_iI_j) = 
\sum_{i=1}^t\sum_{j=1}^t (\Pr(I_i=I_j=1)-\Pr(I_i=1)\Pr(I_j=1))\,.
\]
We have $\Pr(I_i=1) = \Pr(I_j=1) = p^{d}(1-p)$. If the non-hyperedge in $\naH_i$ overlaps with a hyperedge in $\naH_j$ or vice versa, then $\cov(I_i,I_j)$ are negative. So to get an upper bound of $\var(X_\naH)$, we only consider pairs $(\naH_i,\naH_j)$ that are positively correlated. 
Consider pairs $(\naH_i,\naH_j)$ such that $\naH_i\cap \naH_j = K$, where $K\subset \naH$ is a sub-hypergraph of $\naH$ with non-empty edge set.
\begin{align*}
\var(X_\naH) &= O_n\B( \sum_{\substack{K \subseteq \naH,\\ e_K > 0}} n^{2v_\naH - v_K} \left( p^{2e_\naH - e_K} - p^{2e_\naH} \right) \B) \\
&= O_n\B( n^{2v_\naH} p^{2e_\naH} \sum_{\substack{K \subseteq H,\\ e_K > 0}} n^{-v_K} p^{-e_K} \B).
\end{align*}
Then from Lemma~\ref{lem:non-zero}, 
\[
\Pr(X_\naH\ne 0) \ge \frac{(\E X_\naH)^2}{(\E X_\naH)^2+\var(X_\naH)} = \frac{1}{1+O_n(\sum_{\substack{K\subseteq \naH,\\ e_K > 0}} n^{-v_K} p^{-e_K})}\,.
\]
We can easily check that for any $K\subset \naH$, $\frac{e_K}{v_K}\le \frac{e_\naH}{v_\naH}$. So by $p=\Omega_n (n^{-v_\naH/e_\naH})$, we have
\[
\sum_{\substack{K\subseteq \naH,\\ e_K > 0}} n^{-v_K} p^{-e_K} = \sum_{\substack{K\subseteq \naH,\\ e_K > 0}} O_n(n^{-v_\naH} n^{v_\naH }) = O_n(1)\,.
\]
This means $\Pr(X_\naH\ne 0) = \Omega_n(1)$.
\end{proof}

\begin{lemma}
When $\delta<\frac{d-3}{d}$,
\[
    \Pr\B(\exists \he\in\binom{[n]}{d} \text{ s.t. } \he  \text{ is a clique in $\pG$ but } \he\notin \hE_\rhG\B)=o_n(1)\,.
\]
\end{lemma}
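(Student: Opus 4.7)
The plan is to bound the expected number of \emph{fake cliques}---subsets $\he\in\binom{[n]}{d}$ that form a clique in $\pG$ but are not hyperedges of $\rhG$---and apply Markov's inequality. If $\he$ is a fake clique, then there is a minimal cover $\hE'\subseteq\hE_\rhG\setminus\{\he\}$ with $\proj(\hE')\supseteq\binom{\he}{2}$. Writing $k_i=|\he_i'\cap\he|\in\{2,\ldots,d-1\}$, $e=|\hE'|$, and $v=|V(\hE')\cup\he|$, we have $v\le d+\sum_i(d-k_i)=d+de-\sum_i k_i$. Because $e\le\binom{d}{2}$ for a minimal cover, the number of isomorphism types of $(\he,\hE')$ is bounded, and by Lemma~\ref{lem:number-appearance} the expected number of copies of any given type is $\Theta(n^v p^e)$.

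It suffices to show $n^v p^e=o(1)$ for every minimal cover type. Using $p=n^{-d+1+\delta}$, this is equivalent to $v<e(d-1-\delta)$. Since $\delta<(d-3)/d$ gives $d-1-\delta>(d^2-2d+3)/d$, it is enough to prove the combinatorial inequality
\begin{equation*}
d\sum_i k_i \ \ge\ d^2 + (2d-3)\,e,
\end{equation*}
which, combined with the bound on $v$ above, yields $v/e\le(d^2-2d+3)/d$. The inequality is sharp: equality is achieved at the configuration $\naH$ of Figure~\ref{fig:fake-hyperedge} (one hyperedge with $k=d-1$ and $d-1$ hyperedges with $k=2$), matching the threshold $\delta=(d-3)/d$ exactly.

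I would prove the combinatorial inequality by a two-case analysis using $f(k):=dk-(2d-3)$. If some $k_{i_0}=d-1$, let $v_0$ be the vertex of $\he$ missing from $\he_{i_0}'$. Since $\he_{i_0}'$ already covers every edge of $\he$ not incident to $v_0$, minimality forces every other hyperedge of $\hE'$ to contain $v_0$ (otherwise it would be redundant), and moreover each such hyperedge must uniquely cover at least one of the $d-1$ edges from $v_0$, giving $|T|\le d-1$ where $T=\hE'\setminus\{\he_{i_0}'\}$. Combined with $\sum_{j\in T}(k_j-1)\ge d-1$, a direct computation shows
\begin{equation*}
\sum_j f(k_j)\ \ge\ f(d-1)+d(d-1)-(d-3)|T|\ \ge\ d^2,
\end{equation*}
with equality exactly at $\naH$. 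In the complementary case where all $k_i\le d-2$, the LP relaxation (replacing the cover constraint by the weaker $\sum n_k\binom{k}{2}\ge\binom{d}{2}$) gives $\sum_i f(k_i)\ge\binom{d}{2}\cdot\min_{2\le k\le d-2}f(k)/\binom{k}{2}$, and one checks that the minimum is attained at $k=d-2$ with ratio $2(d-1)/(d-2)$, yielding $\sum_i f(k_i)\ge d(d-1)^2/(d-2)\ge d^2$ (the case $d=3$ being vacuous, and $d=4$ using only $k=2$).

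The main obstacle is the sharp justification of the two cases: in Case A, arguing that minimality rules out hyperedges missing $v_0$ and then translating this into $|T|\le d-1$; in Case B, locating the LP minimum correctly among $k\in\{2,\ldots,d-2\}$. Both are elementary but must be verified carefully, since the inequality is tight at $\naH$ and any slack would give the wrong threshold.
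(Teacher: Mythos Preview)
Your argument is correct. The reduction to a first-moment bound over minimal covers, and the resulting inequality $d\sum_i k_i\ge d^2+(2d-3)e$ (equivalently $\sum_i f(k_i)\ge d^2$), is exactly the combinatorial heart of the matter; it is the statement $g_0\!\bigl((d-3)/d\bigr)=d$ that the paper proves as Lemma~\ref{lem:optimal-clique-cover}. Both proofs land on the same optimization problem.

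Where you diverge from the paper is in how you solve that optimization. The paper splits on the \emph{number of sets}: for $|\cS'|\ge d$ it does an LP relaxation over real clique sizes and a concavity argument on a polyhedron; for $|\cS'|\le d$ it runs an induction on $d$, constructing a map $M_v:D(d)\to D(d-1)$ that deletes a carefully chosen vertex and tracking the drop in cost. You instead split on the \emph{maximum intersection size}. Your Case A (some $k_{i_0}=d-1$) exploits minimality in a clean structural way: the one hyperedge missing only $v_0$ forces all others to contain $v_0$, immediately bounding $|T|\le d-1$ and $\sum_{T}(k_j-1)\ge d-1$, from which the inequality follows by a one-line computation. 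Your Case B (all $k_i\le d-2$) is the ratio bound $\sum f(k_i)\ge\binom{d}{2}\min_k f(k)/\binom{k}{2}$, with the minimum located at $k=d-2$ via the factorisation $(d-1)k^2-(d^2-d-1)k+(2d-3)(d-2)=(d-1)(k-(d-2))(k-\tfrac{2d-3}{d-1})$.

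Your route is shorter and more transparent than the paper's: the minimality argument in Case A replaces the paper's somewhat involved induction, and your Case B is the same LP idea but restricted to $k\le d-2$, which makes the endpoint analysis cleaner. The paper's approach, on the other hand, handles the two cases with more uniform machinery (both reduce to concave minimisation on polytopes), which may generalise more readily. Both identify $\naH$ as the unique tight configuration.
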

By union bound over all possible cliques in $\binom{[n]}{d}$, the above probability is upperbounded by 
\[
\binom{n}{d}\Pr([d] \text{ is a clique in $\pG$ but } [d]\notin \hE_\rhG) = \binom{n}{d}(1-p)\Pr(\forall i,j\in [d],\exists \he\in \hE_\rhG, i,j\in \he\b|[d]\not\in \hE_\rhG)
\]

Now we split the event as follows. Let $\mathcal{S} = \{S_1,S_2,\cdots, S_{m}\}$ be all non-empty proper subsets of $[d]$ with size at least 2, $m=2^d-2-d$. Let $A_i$ be the event that at least one of edges in the set 
\[\hE_i=
\B\{\he\in \binom{[n]}{d}\B|\he\cap [d] = S_i\B\}
\]
is included in $\hE_\rhG$. Then the event that $[d]$ is a clique in $\pG$ but $[d]\notin \hE_\rhG$ is equivalent to the event that every pair $j,k\in[d]$ is included in some $S_i$ where $A_i$ happens. We have
\begin{align*}
&\  \Pr(\forall i,j\in [d],\exists \he\in \hE_\rhG, i,j\in \he\b|[d]\not\in \hE_\rhG) \\
&= \sum_{I\subset [m]} \1\{\forall j,k\in[r], \exists i\in I, j,k\in S_i\} \Pr((\cap_{i\in I}A_i)\cap(\cap_{i\in [m]\backslash [I]}A_i^c) )\\
&\le \sum_{I\subset [m]} \1\{\forall j,k\in[r], \exists i\in I, j,k\in S_i\} \Pr(\cap_{i\in I}A_i)\\
&= \sum_{I\subset [m]} \1\{\forall j,k\in[r], \exists i\in I, j,k\in S_i\} \prod_{i\in I}\Pr(A_i) \numberthis\label{eq:union-events}
.
\end{align*}
The last inequality is because $\hE_i$ are disjoint sets of edges, so $A_i$ are independent events. 

Note that there are $\binom{n}{d-|S_i|}$ edges in $\hE_i$, we have 
\[
\Pr(A_i)\le  1-(1-p)^{\binom{n}{d-|S_i|}}\le pn^{d-|S_i|}\le n^{-(|S_i|-1-\delta)}
.
\]
Since there are $O_n(1)$ terms in \eqref{eq:union-events}, we have
\[
\Pr(\forall i,j\in [d],\exists \he\in \hE_\rhG, i,j\in \he\b|[d]\not\in \hE_\rhG) = O_n\b(n^{-g_0(\delta)}\b).
\]
Recall that \[
g_0(\delta) = \min_{\substack{I\subset [m]:\\ \proj([d]) \subset \cup_i\proj(S_i)}} \sum_{i\in I}(|S_i|-1-\delta)\,.
\]
Therefore, 
\begin{equation*}
 \Pr\B(\exists \he\in\binom{[n]}{d} \text{ s.t. } \he  \text{ is a clique in $\pG$ but } \he\notin \hE_\rhG\B)  = O_n\b(n^{d-g_0(\delta)}\b)  \,.
\end{equation*}
The calculation of $g_0(\delta)$ is a nontrivial combinatorial optimization problem. 

Assuming Lemma~\ref{lem:optimal-clique-cover}, and note that $g_0(\delta)$ is strictly decreasing in $\delta$, we have for any $\delta<\frac{d-3}{d}$, $g_0(\delta)<d$. And thus 
\begin{equation*}
 \Pr\B(\exists \he\in\binom{[n]}{d} \text{ s.t. } \he  \text{ is a clique in $\pG$ but } \he\notin \hE_\rhG\B)  = O_n\b(n^{d-g_0(\delta)}\b)=o_n(1)  \,.
\end{equation*}

\begin{lemma}\label{lem:optimal-clique-cover}
Let $\cS = \{S_1,S_2,\cdots S_m\}$ be the set of all proper subsets of $[d]$ with size at least 2. When $\delta = \frac{d-3}{d}$, 
\[
g_0(\delta) =\min_{\substack{\cS'\subset \cS:\\ \proj([d]) \subset \proj(\cS')}} \sum_{S\in \cS'}(|S|-1-\delta)=d\,,
\]
achieved by the following set of subsets:
$\{2,3,\cdots,d\},\{1,2\},\{1,3\},\cdots,\{1,d\}$.
\end{lemma}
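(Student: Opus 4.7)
The plan is to check achievability by direct computation and then to prove the matching lower bound through a combinatorial case split driven by a per-vertex coverage inequality. First, I would verify that $\{\{2,\ldots,d\}, \{1,2\}, \ldots, \{1,d\}\}$ is a valid cover of $K_d$ (the $(d-1)$-element set covers every edge not incident to vertex $1$, while the $d-1$ pairs cover the remaining edges at vertex $1$) and that its objective evaluates to $(d-2-\delta) + (d-1)(1-\delta)$, which after substituting $\delta = (d-3)/d$ simplifies to exactly $d$.

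For the lower bound, the key input is the per-vertex coverage inequality: for each $v \in [d]$, the $d-1$ edges at $v$ can only be covered by sets $S \in \cS'$ with $v \in S$, so $\sum_{S \ni v}(|S|-1) \geq d-1$. Summing over $v$ yields the global bound $\sum_{S \in \cS'} |S|(|S|-1) \geq d(d-1)$. I would then case-split on whether $\cS'$ contains a size-$(d-1)$ set. In Case A, fix some $S_1 = [d]\setminus\{v\} \in \cS'$; it contributes $d-2-\delta$ to the objective and covers every edge not incident to $v$. Setting $t_v := |\{S \in \cS' : v \in S\}|$, the per-vertex bound at $v$ gives $\sum_{S \ni v}(|S|-1-\delta) \geq (d-1)-\delta t_v$, so the total objective is at least $2d-3-\delta(1+t_v)$. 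After substituting $\delta = (d-3)/d$, this is $\geq d$ for every $t_v \leq d-1$ (with equality exactly at $t_v = d-1$, matching the extremizer). For $t_v \geq d$, the cruder estimate $\sum_{S \ni v}(|S|-1-\delta) \geq t_v(1-\delta) \geq 3$ yields total $\geq d+1-\delta > d$ instead.

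In Case B, every $S \in \cS'$ satisfies $|S| \leq d-2$. Let $\phi(k) := k(k-1)/(k-1-\delta)$; a direct computation of $\phi''$ shows $\phi$ is strictly convex for $k-1 > \delta$, so on the integer range $\{2,\ldots,d-2\}$ its maximum is attained at an endpoint. Comparing $\phi(2) = 2d/3$ with $\phi(d-2) = d(d-2)/(d-1)$ gives $\phi_{\max} = \phi(d-2)$ for $d \geq 4$ (and Case B is vacuous at $d=3$). Then $|S|-1-\delta \geq |S|(|S|-1)/\phi_{\max}$, and together with the global per-vertex bound,
\[
\sum_{S \in \cS'}(|S|-1-\delta) \geq \frac{d(d-1)}{\phi_{\max}} = \frac{(d-1)^2}{d-2} = d + \frac{1}{d-2} > d.
\]
The main obstacle is that a pure LP-duality proof does not work: the natural edge-weight LP (find $z_e \geq 0$ with $\sum_e z_e = d$ and $\sum_{e \in S} z_e \leq |S|-1-\delta$ for every valid $S$) has a genuine integrality gap for $d \geq 4$, so no single dual certificate can produce the bound $d$. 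The combinatorial case split circumvents this by exploiting the structure of the extremal cover — a single size-$(d-1)$ set together with $d-1$ size-$2$ pairs through a common vertex — which is precisely the equality pattern of Case A at $t_v = d-1$, while Case B yields a strict inequality whenever no size-$(d-1)$ set is present.
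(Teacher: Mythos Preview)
Your proof is correct and takes a genuinely different route from the paper's. The paper splits according to the \emph{number} of sets: for $|\cS'|\ge d$ it uses a continuous relaxation (mirroring the argument in Lemma~\ref{lem:cover-bound}), reducing to concave minimization over a polyhedron and checking the vertices; for $|\cS'|\le d$ it runs an induction on $d$ via a vertex-deletion map $M_v:D(d)\to D(d-1)$, carefully tracking how the objective changes and handling a separate case analysis just to locate an admissible $v$. You instead split on $\max_{S\in\cS'}|S|$: if some $|S|=d-1$ you apply the per-vertex coverage inequality $\sum_{S\ni v}(|S|-1)\ge d-1$ at the vertex missing from that set (splitting further on $t_v\le d-1$ versus $t_v\ge d$), while if all $|S|\le d-2$ you combine the summed version $\sum_S |S|(|S|-1)\ge d(d-1)$ with convexity of $\phi(k)=k(k-1)/(k-1-\delta)$ to get the strict bound $(d-1)^2/(d-2)>d$. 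Your argument is more elementary---it avoids both the continuous relaxation and the induction, and the per-vertex inequality (which the paper does not use) does all the structural work. The paper's approach has the virtue of recycling machinery from Lemma~\ref{lem:cover-bound}, but pays for it with a more delicate inductive step. Your observation that the LP relaxation has an integrality gap for $d\ge 4$, forcing a combinatorial case split, nicely explains why \emph{both} proofs need two cases rather than a single clean dual certificate.
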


\begin{proof}
We will prove this under two separate cases. The first case is when $|\cS'|\ge d$, this is done by relaxation of the problem to real number. The second case is when $|\cS'|\le d$, which is done by induction on $d$.

\paragraph{Case 1: $|\cS'|\ge d$.}Let's begin with the first case. We will prove 
\[
f(\delta) =\min_{\substack{\cS'\subset \cS,|\cS'|\ge d:\\ \proj([d]) \subset \proj(\cS')}} \sum_{S\in \cS'}(|S|-1-\delta)=d\,.
\]
This part of the proof is similar to what we did in Lemma~\ref{lem:cover-bound}.
We can get a lower bound on $f(\delta)$ by relaxing the set of possible $\cS'$ to be the set of cliques with at least $\binom{d}{2}$  number of edges. Also each clique in the set $\cS'$ that reaches minimum contains a unique edge, so $|\cS'|\le \binom{d}{2}$. We have
\[
f(\delta)\ge 
\min_{\substack{\cS'\subset \cS,d\le |\cS'|\le \binom{d}{2}:\\ \sum_{i\in I}\binom{|S_i|}{2} \ge \binom{d}{2} }} \sum_{i\in I}(|S_i|-1-\delta)
\,.
\]
Substituting $|S_i|$ with $x_i$ and relaxing it to real numbers, we get another lower bound on $f(\delta)$:
\[
\min_{M\in \mathbb{Z}^+,d\le M\le \binom{d}{2}}
\min_{\substack{x_1,x_2,\cdots, x_M\ge 2:\\ \sum_{i=1}^M\frac{x_i(x_i-1)}{2} \ge \binom{d}{2}} } \sum_{i=1}^M(x_i-1-\delta)
\,.
\]
By setting $y_i=\frac{x_i(x_i-1)}{2}$, the above can be written as 
\[
\min_{\substack{M\in \mathbb{Z}^+,\\d\le M\le \binom{d}{2}}}
\min_{\substack{y_1,\cdots, y_M\ge 1:\\ \sum_{i=0}^M y_i \ge \binom{d}{2}} }\B\{ \sum_{i=1}^M \B(\frac{1+\sqrt{1+8y_i}}{2}\B) - M(1+\delta)\B\}\,.
\]
For a fixed $M$, this is minimizing a concave function of $y$ over a polyhedron. So the minimum is either at a vertex or infinity. The later is obviously not the minimum. So the minimum is at a vertex of the following polyhedron:
\[
P= \B\{y:y_i\ge 1, \sum_{i=0}^M y_i \ge \binom{d}{2}\B\}\,.
\]
By symmetry of the function and $P$ under permutation of coordinates, we can consider one of the vertices without loss of generality. Let $y_1=y_2=\cdots=y_{M-1} = 1$, $y_M=\binom{d}{2}-M+1$, we have that the above is equal to
\[
\min_{\substack{M\in \mathbb{Z}^+,\\ 2\le M\le \binom{d}{2}}} \B\{ (M-1)(1-\delta)-1-\delta+  \frac{1+\sqrt{1+8(d(d-1)/2-M+1))}}{2}\B\}\,.
\]
The function is concave in $M$, so the minimum is at $M=2$ or $M=\binom{d}{2}$. When $\delta=\frac{d-3}{d}$ and $M=\binom{d}{2}$, the function is $3(d-1)/2>d$. When $\delta=\frac{d-3}{d}$ and $M=d$,  the function is $d$. So $f(\delta)\ge d$ when $\delta =\frac{d-3}{d} $
\paragraph{Case 2: $|\cS'|\le d$.} Next we prove the second case. We will show 
\[
h(d) \defeq\min_{\substack{\cS'\subset \cS,|\cS'|\le d:\\ \proj([d]) \subset \proj(\cS')}} \sum_{S\in \cS'}(|S|-1-\frac{d-3}{d})=d\,.
\]
Use induction on $d$. For $d=3$ there is only one possible $\cS'$, it's easy to verify that $h(3)=3$. 

Now assume $h(d-1) = d-1$, we want to show $h(d)=d$. For the simplicity of discussion, let $D(d)$ be the set of $\cS'$ that satisfy $|\cS'|\le d$ and $\proj([d]) \subset \proj(\cS')$. Also define a functional $F(\cS') \defeq \sum_{S\in \cS'}(|S|-1-\frac{d-3}{d})$. For any $\cS'\in D(d)$ and any $v\in[d]$, let us define a mapping $M_v(\cS'):D(d)\rightarrow D(d-1)\cup \{\bot\}$,
\[
M_v(\cS') = 
\begin{cases}
    \bot &\text{ if } ([d]\backslash\{v\})\in \cS'\\
    \{S\backslash\{v\}:S\in \cS', |S\backslash\{v\}|>1\} &\text{ otherwise } 
\end{cases}
\]
For now assume there exists a $v\in [d]$ that satisfy the two following properties,
\begin{itemize}
    \item $([d]\backslash\{v\})\not\in \cS'$ and
    \item $|\{S\in \cS': |S|=2,v\in S\}|\le 1$.
\end{itemize}
We will prove why such $v$ exists later. We have
\[
\begin{split}
&F(\cS')-F(M_v(\cS'))\\
&=\sum_{S\in \cS'}(|S|-1-\frac{d-3}{d}) - \sum_{S\in M_v(\cS')}(|S|-1-\frac{d-4}{d-1})\\
&=(\frac{d-4}{d-1}-\frac{d-3}{d})|\cS'|+\sum_{S\in \cS'}(|S|-1-\frac{d-4}{d-1})- \sum_{S\in M_v(\cS')}(|S|-1-\frac{d-4}{d-1})\\
&\ge -\frac{3}{d-1} +\sum_{S\in \cS',v\in S,|S|>2}1 +\sum_{S\in \cS',v\in S,|S|=2}(1-\frac{d-4}{d-1})\\
&= -\frac{3}{d-1} +\sum_{S\in \cS',v\in S}1 - \sum_{S\in \cS',v\in S,|S|=2}\frac{d-4}{d-1}\,.
\end{split}
\]
For the inequality, we used that $|\cS'|\le d$. By the assumption on $v$, we have $|S\in \cS',v\in S,|S|=2|\le 1$. Also $ v$ cannot only be contained in one set $S$ in $\cS'$, otherwise as for any vertex $u\ne v$, pair $\{v,u\}$ is contained in some set, $S$ would contain all nodes in $[d]$, contradicting the fact that $\cS$ only contain proper subsets of $[d]$. So $|S\in \cS',v\in S|\ge 2$. Taking all these into the inequality above, we have
\[
F(\cS')-F(M_v(\cS'))\ge  -\frac{3}{d-1}+2-\frac{d-4}{d-1} = 1\,.
\]
By induction assumption, $F(M_v(\cS'))\ge d-1$, so $F(\cS')\ge d$. This holds for any $\cS'\in D(d)$, so $h(d)\ge d$. As $d$ can be achieved by the configuration in the lemma statement, we have $h(d)=d$.

What remains is to show there exists $v$ that satisfy $([d]\backslash\{v\})\not\in \cS'$ and
 $|\{S\in \cS': |S|=2,v\in S\}|\le 1$. For contradiction suppose every $v$ either satisfy $([d]\backslash\{v\})\in \cS'$ or $|\{S\in \cS': |S|=2,v\in S\}|\ge 2$. Suppose there are $s$ nodes in $[d]$ satisfy $|\{S\in \cS': |S|=2,v\in S\}|\ge 2$, denote this set of nodes by $T$, and at least $d-s$ nodes satisfy $([d]\backslash\{v\})\in \cS'$, denote this set of nodes $U$. Then there are at least $d-s$ sets in $\cS'$ with size $d-1$. By counting degree, there are at least $2s/2=s$ size-2 set in $\cS'$. Since $|\cS'|=d$, there must be $d-s$ sets  with size $d-1$ and $s$ sets with size 2  in $\cS'$. The $s$ size-2 sets must be between nodes in $T$. So nodes in $U$ are not in any size-2 sets. If $d-s=1$, then that node in $U$ is not connected to any node, contradiction. If $d-s=2$, the two nodes in $U$ are not connected. Therefore, $d-s\ge 3$, there are at least 3 size $d-1$ hyperedges. In this case $F(\cS')>d$ and can be omitted as we only care about the minimum value of $F(\cS')$. 
 \end{proof}


\section{Deferred Proofs of Lemmas}
\label{s:deferred}

\subsection{Proof of Lemma~\ref{lem:monotone}}\label{sec:monotone}
Let $p_1 = n^{-d+1+\delta_1}$ and $p_2 = n^{-d+1+\delta_2}$. Let $\rhG_1$ and $\rhG_2$ be the random hypergraphs when hyperedge density are $p_1$ and $p_2$ respectively. Assume we have a black-box algorithm that exactly recovers $\rhG_2$. We will use it  to  recover $\rhG_1$ from $\proj(\hE_{\rhG_1})$.

The key observation is that a dense graph is the union of two sparse graphs. Specifically, let $p_3$ satisfy $p_1+(1-p_1)p_3 = p_2$, and $\rhG_3$ be a random hypergraph sampled from $\rhG(n,d,p_3)$. In the union  $\hE_{\rhG_1}\cup \hE_{\rhG_3}$, each hyperedge is included with probability $p_1+(1-p_1)p_3=p_2$. We have $\hE_{\rhG_1}\cup \hE_{\rhG_3}$ and $\hE_{\rhG_2}$ follows the same distribution.

Now given $\proj(\hE_{\rhG_1})$, we generate a sample of $\rhG_3$. Using the black box, we can recover $\hE_{\rhG_1}\cup \hE_{\rhG_3}$ from
\[
\proj (\hE_{\rhG_1}\cup \hE_{\rhG_3}) = \proj(\hE_{\rhG_1}) \cup \proj(\hE_{\rhG_3})\,
\]
with high probability.
By union bound over all possible hyperedges, probability that $\hE_{\rhG_1}$ and $\hE_{\rhG_3}$ has non-empty overlap is upper bounded by
\[
\Pr(\hE_{\rhG_1}\cap \hE_{\rhG_3}\ne \emptyset)\le 
\binom{n}{d}p_1p_3\le n^d p_1p_2\le n^{-d+2+\delta_1+\delta_2}=o_n(1)\,.
\]
Here we used $p_1<p_3<p_2$ in the first inequality. The last equality follows from $d\ge 4$ and $\delta_1<1$, $\delta_2\le 1$. So with high probability, $\hE_{\rhG_1}$ and $\hE_{\rhG_3}$ do not have common hyperedges, and we can recover $\hE_{\rhG_1}$ by subtracting $\hE_{\rhG_3}$ from $\hE_{\rhG_1}\cup \hE_{\rhG_3}$.
\hfill\qed

\subsection{Proof of Lemma~\ref{lem:union-min-preimage}}\label{sec:union-min-preimage}
Let  $E_i = \proj(C_i)$, and $V_i$ be the node set of $C_i$. Recall that for any two sets of hyperedges, $\proj(C_1\cup C_2) = \proj(C_1)\cup \proj( C_2)$.

\paragraph{Step 1: $\cup_i \proj(C_i)$ forms a partition of $ E_p$.} 
First, note that because $(C_i)_i$ partitions the hyperedges in $\cliG$, and then by definition of $\cliG$, 
$$
\bigcup_i \proj(C_i) = \proj(\cliG) \,.
$$
Next, if any $\{a,b\}\in C_i\cap C_j$, then there are hyperedges $h_i\in C_i$ and $h_j\in C_j$ each containing $\{a,b\}$. But then $h_i$ and $h_j$ are 2-connected, so $C_i$ and $C_j$ cannot be two separate components. 

\paragraph{Step 2: LHS $\subseteq$ RHS.} 
 Consider an arbitrary preimage $\hG\in \prim (\pE)$.
 We have $\hG\subseteq \cliG$, because $\hG$ is a clique cover of $E_p$ and $\cliG$ is the maximal clique cover. Since $\{C_i\}_i$ partitions the hyperedges in $\cliG$, 
$$H
=  \bigcup_{i\in [m]}\hG\cap C_i\,.$$
We will now argue that $\hG\cap C_i\in \prim(\proj(C_i))$, i.e., $\proj(\hG\cap C_i)= \proj(C_i)$. 
It suffices to prove that $\proj(\hG\cap C_i)\supseteq \proj(C_i)$. Consider any edge $e=\{a,b\}\in \proj(C_i)\subseteq E_p$. 
Now,
$\hG$ has some hyperedge $h$ containing the endpoints of $e$, because $\proj(\hG) = E_p$. 
Secondly, 
$C_i$ has all hyperedges in $\cliG$ containing $e$,
by step 1,
and therefore contains $h$. It follows that $\proj(\hG\cap C_i)$ contains $e$.


\paragraph{Step 3: RHS $\subseteq$ LHS.} 
 We want to show that any union on the right-hand side is in $\prim(E_p)$.
This follows immediately from step 1: if $H = \cup_{i=1}^m H_i$ for $\hG_i\in \prim(\proj(C_i))$, then $\proj(H) = \cup_i \proj(H_i) = \cup_i \proj(C_i) = E_p$. \qed

\subsection{Proof of Lemma~\ref{lem:subgraph_threshold}}\label{sec:subgraph_threshold}
Let us first state a lemma that will be used later.
\begin{lemma}\label{lem:non-zero}
    For a real-valued random variable $X$,
    \[
    \Pr(X\ne 0) \ge \frac{(\E X)^2}{(\E X)^2+\var(X)}\,.
    \]
\end{lemma}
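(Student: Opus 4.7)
The plan is to apply Cauchy--Schwarz to the decomposition $X = X \cdot \1\{X \neq 0\}$, which is valid because $X$ vanishes on the complementary event. First I would write
\[
\E X = \E\bigl[X \cdot \1\{X \neq 0\}\bigr],
\]
and then bound the right-hand side via Cauchy--Schwarz:
\[
(\E X)^2 = \bigl(\E[X \cdot \1\{X \neq 0\}]\bigr)^2 \le \E[X^2] \cdot \E\bigl[\1\{X \neq 0\}^2\bigr] = \E[X^2] \cdot \Pr(X \neq 0).
\]

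The next step is simply to rearrange and substitute $\E[X^2] = (\E X)^2 + \var(X)$:
\[
\Pr(X \neq 0) \ge \frac{(\E X)^2}{\E[X^2]} = \frac{(\E X)^2}{(\E X)^2 + \var(X)},
\]
which is the desired inequality. The degenerate case $\E[X^2] = 0$ implies $X = 0$ a.s., in which case both sides are $0$ and the inequality holds trivially (interpreting $0/0$ as $0$). There is no real obstacle here; the main content is just recognizing that the Cauchy--Schwarz application with indicator $\1\{X \neq 0\}$ is the right move, which is the standard Paley--Zygmund-type trick used throughout second-moment arguments.
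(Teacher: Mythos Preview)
Your proof is correct and follows essentially the same approach as the paper: both apply Cauchy--Schwarz to $X = X\cdot \1\{X\neq 0\}$ and then rewrite $\E[X^2]$ as $(\E X)^2 + \var(X)$. Your treatment of the degenerate case $\E[X^2]=0$ is a small extra care the paper omits.
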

\begin{proof}
Let $Y=\ind{X\ne 0}$. By Cauchy-Schwartz, 
\[
(\E XY)^2\le \E [X^2]\E[Y^2]\,.
\]
Since $XY=X$, the left hand side is equal to  $(\E X)^2$. Since $Y$ takes 0,1 value, $\E[Y^2]=\E[Y]=\Pr(X\ne 0)$. We have
\[
\Pr(X\ne 0)\ge \frac{(\E X)^2}{\E [X^2]} = \frac{(\E X)^2}{(\E X)^2+\var(X)}\,.
\]
\end{proof}

We prove the second and third claim using second moment method

Let $\shG_1,\shG_2,\cdots,\shG_t$ be all copies of such sub-hypergraph on the complete graph of $[n]$, we have
\[
t=\binom{n}{v_\shG}\frac{(v_\shG)!}{\aut(\shG)}=\Theta_n(n^{v_\shG})\,.
\]
Here $\aut(\shG)$ is the number of automorphisms of $\shG$.
Let $I_i$ be the indicator that $\shG_i$ is in $\rhG$. And $X_\shG = \sum_{i=1}^tI_i$ be the number of such event happening. We have
\[
\E [X_\shG] = tp^{e_\shG}(1-p) = \Theta_n(n^{v_\shG}p^{e_\shG})\,.
\]
And 
\[
\var(X_\shG) = \sum_{i=1}^t\sum_{j=1}^t \cov(I_iI_j) = 
\sum_{i=1}^t\sum_{j=1}^t (\Pr(I_i=I_j=1)-\Pr(I_i=1)\Pr(I_j=1))\,.
\]
We have $\Pr(I_i=1) = \Pr(I_j=1) = p^{d}(1-p)$. 
Consider pairs $(\shG_i,\shG_j)$ such that $\shG_i\cap \shG_j = \shG'$, where $\shG'\subset \shG$ is a sub-hypergraph of $\shG$ with non-empty edge set.
\begin{align*}
\var(X_\shG) &= O_n\B( \sum_{\substack{\shG' \subseteq \shG,\\ e_{\shG'} > 0}} n^{2v_\shG - v_{\shG'}} \left( p^{2e_\shG - e_{\shG'}} - p^{2e_\shG} \right) \B) \\
&= O_n\B( n^{2v_\shG} p^{2e_\shG} \sum_{\substack{\shG' \leq \shG,\\ e_{\shG'} > 0}} n^{-v_{\shG'}} p^{-e_{\shG'}} \B).
\end{align*}
Then from Lemma~\ref{lem:non-zero}, 
\[
\Pr(X_\shG\ne 0) \ge \frac{(\E X_\shG)^2}{(\E X_\shG)^2+\var(X_\shG)} = \frac{1}{1+O_n(\sum_{\substack{\shG' \leq \shG,\\ e_{\shG'} > 0}} n^{-v_{\shG'}} p^{-e_{\shG'}})}\,.
\]
We can easily check that for any $\shG'\subset \shG$, $\frac{e_\shG'}{v_\shG'}\le \frac{e_\shG}{v_\shG}$. So when $p=\Theta_n(n^{-1/m(\shG)}) = \Omega_n (n^{-v_{\shG'}/e_{\shG'}})$, we have
\[
\sum_{\substack{\shG' \subseteq \shG,\\ e_{\shG'} > 0}} n^{-v_\shG'} p^{-e_{\shG'}} = \sum_{\substack{\shG' \leq \shG,\\ e_\shG' > 0}} O_n(n^{-v_{\shG'}} n^{v_{\shG'} }) = O_n(1)\,.
\]
This means $\Pr(X_\shG\ne 0) = \Omega_n(1)$.

When $p=\omega_n(n^{-1/m(\shG)}) = \omega (n^{-v_\shG/e_\shG})$, 
we have
\[
\sum_{\substack{\shG' \leq \shG,\\ e_\shG' > 0}} n^{-v_\shG'} p^{-e_\shG'} = \sum_{\substack{\shG' \leq \shG,\\ e_\shG' > 0}} o_n(n^{-v_{\shG'}} n^{v_{\shG'} }) = o_n(1)\,.
\]
This means $\Pr(X_\shG\ne 0) = 1-o_n(1)$.

Next we prove the first claim. Let $\shG'\subset \shG$ be the sub-hypergraph that $\frac{e_{\shG'}}{v_{\shG'}} = m(\shG)$.
\[
\Pr(\shG\subset \rhG)\le \Pr(\shG'\subset \rhG)\le \E X_{\shG'} = \Theta_n(n^{v_{\shG'}}p^{e_{\shG'}})\,.
\]
When $p=o_n(n^{-1/m(\shG)})$ the above is $o_n(1)$.
\hfill\qed

\subsection{Proof of Lemma~\ref{lem:no-ambiguous}}\label{sec:proof-no-ambiguous}
Let $C_1,\cdots, C_m$ be all the 2-connected components in $\cliG=\cli(\pG)$. Let $V_i$ be the node set of $C_i$, $r_i$ be the size of the minimum preimage of $C_i$. The success probability can be written in terms of the posterior distribution.
\[
\Pr(\cA^*(\pG)=\rhG) = \E_{\pG}[p_{\rhG|\pG}(\cA^*(\pG)|\pG)] \,.
\]
We will show that this posterior probability is close to 1 with high probability.
Recall the posterior distribution is
\[
p_{\rhG|\pG}(H|\pG) =\frac{\ind{\proj(\hE_H) = \pE}p_{\rhG}(\hE_H) }{p_\pG(\pE)}\propto  \ind{\hE_H \in \prim( \pE)} \b(\frac{p}{1-p}\b)^{|\hE_H|}\,,
\]
By Lemma~\ref{lem:union-min-preimage}, $\hE_H \in \prim( \pE)$ is equivalent to $H\cap C_i\in \prim(\proj(C_i))$ for all $i$. Recall that $\hE_H=\cup_i(H\cap C_i)$. So the posterior distribution can be written as
\[
p_{\rhG|\pG}(H|\pG) \propto  \prod_{i=1}^m \B(\ind{H\cap C_i\in \prim(\proj(C_i))} \b(\frac{p}{1-p}\b)^{e(H\cap C_i)}\B)\,.
\]
Here $e(H\cap C_i)$ stands for the number of hyperedges in $H\cap C_i$. We have
\[
p_{\rhG|\pG}(\cA^*(\pG)|\pG) = \prod_{i=1}^m \frac{ \b(\frac{p}{1-p}\b)^{r_i}}{\sum_{H'\in \prim(\proj(C_i))} \b(\frac{p}{1-p}\b)^{e(H')}} =  \prod_{i=1}^m \frac{ 1}{\sum_{H'\in \prim(\proj(C_i))} \b(\frac{p}{1-p}\b)^{e(H')-r_i}}\,.
\]

By Lemma~\ref{lem:component-constant-size}, with high probability any $C_i$ has size at most $(2^d+1)/(\frac{d-1}{d+1}-\delta)$. So $|V_i|\le d(2^d+1)/(\frac{d-1}{d+1}-\delta)$. By the assumption of the lemma, any ambiguous graph $G_a$ with at most $(d2^d+1)/(\frac{d-1}{d+1}-\delta)=O_n(1)$ number of nodes has $o_n(1)$ probability of appearing in $\pG$. So by union bound, the probability of any such $G_a$ appearing in $\pG$ is $o_n(1)$.
Therefore, with probability $1-o_n(1)$, $\proj(C_i)$ is not ambiguous for any $i$. This means there is only one hypergraph in $\prim(\proj(C_i))$ with size $r_i$. So with probability $1-o_n(1)$, 
\[
\sum_{H'\in \prim(\proj(C_i))} \b(\frac{p}{1-p}\b)^{e(H')-r_i} \le 1+|\prim(\proj(C_i))|\frac{p}{1-p}=1+O_n(p)\,.
\]
The last equality is because $C_i$ is of size $O_n(1)$, so the number of possible preimages is also $O_n(1)$. Taking this back to the expression of posterior probability, we get
\[
p_{\rhG|\pG}(\cA^*(\pG)|\pG) = (1-O_n(p))^m=1-O_n(mp)\,.
\]
$m$ is the number of 2-connected component, which is bounded by the total number of hyperedges in $\rhG$. On the other hand, the total number of hyperedges in $\rhG$ follows binomial distribution $\bino(\binom{n}{d},p)$. By Chernoff bound, it is $\Theta(n^dp)$ with probability $1-o_n(1)$. So we have 
\[
p_{\rhG|\pG}(\cA^*(\pG)|\pG) = 1-o_n(1)-O_n(n^dp^2)=1-o_n(1)-O_n(n^{-d+2+2\delta})\,.
\]
Since $d\ge 3$, $\delta < \frac{d-1}{d+1}\le \frac{1}{2}$, we have $-d+2+2\delta<0$. So with high probability 
\[
p_{\rhG|\pG}(\cA^*(\pG)|\pG) = 1-o_n(1)\,,
\]
and therefore
\[
\Pr(\cA^*(\pG)=\rhG) = \E_{\pG}[p_{\rhG|\pG}(\cA^*(\pG)|\pG)] =1-o_n(1)\,.
\]
\hfill\qed

\subsection{Proof of Lemma~\ref{lem:density_bad_graph}}\label{sec:proof_density_bad_graph}
Recall the definition of $m(S_1\cup S_2\cup \{\he_1\})$ is 
\[
\max_{K\subset (S_1\cup S_2\cup \{\he_1\})} \frac{e_K}{v_K}
\]
Below we show that this is reached by the whole hypergraph, i.e., when $K=S_1\cup S_2\cup \{\he_1\}$. 
Let $L$ be the set of hyperedges in $S_1$ that is a subset of $K$, $R$ be the set of hyperedges in $S_2$ that is a subset of $K$.

Case 1: $\he_1\not\in K$, $R=\emptyset$. 
\[
\frac{e_K}{v_K} = \frac{|L|}{(d-1)|L|+1}\le \frac{d-1}{(d-1)^2+1}\,.
\]
The maximum is achieved when $L=S_1$.
The case where $R\ne \emptyset$ and $L=\emptyset$ is symmetric. 

Case 2: $\he_1\not\in K$, $L,R\ne \emptyset$. Without loss of generality, assume $|L|\ge |R|$.
\[
\begin{split}
    \frac{e_K}{v_K}&= \frac{|L|+|R|}{(d-1)(|L|+|R|)+2 - \#[i:\he_i^w\in L,\he_i^z\in R]}\\
    &\le \frac{2|L|}{2(d-1)|L|+2-|L|}\\
    &\le \frac{2d-2}{(d-1)(2d-2)+2-(d-1)}\,.
\end{split}
\]
The maximum is achieved when $L=S_1$ and $R=S_2$. Easy to see that maximum in case 2 is larger than the maximum in case 1. 

Case 3: $\he_1\in K$, $R=\emptyset$.
\[
\frac{e_K}{v_K} = \frac{1+|L|}{d+|L|(d-2)}\le \frac{d}{d+(d-1)(d-2)}\,.
\]
The maximum is achieved when $L=S_1$.
The case where $R\ne \emptyset$ and $L=\emptyset$ is symmetric. 

Case 4: $\he_1\in K$, $L,R\ne \emptyset$. Without loss of generality, assume $|L|\ge |R|$.
\[
\begin{split}
    \frac{e_K}{v_K}&= \frac{1+|L|+|R|}{d+1+|L|(d-2)+|R|(d-2)}\\
    &\le \frac{2d-1}{d+1+(2d-2)(d-2)} = \frac{2d-1}{2d^2-5d+5}\,.
\end{split}
\]
The maximum is achieved when $L=S_1$ and $R=S_2$. It is easy to see that the maximum in case 4 is larger than the maximum in case 3. The maximum in case 4 has one more hyperedge than the maximum in case 2, which does not increase the number of nodes. Therefore, case 4 is the maximum overall and $m(S_1\cup S_2\cup \{\he_1\}) = \frac{2d-1}{2d^2-5d+5}$. \hfill\qed

\subsection{Proof of Lemma~\ref{lem:branching}}\label{sec:brahcing}

The high-level approach is to union bound over all possible hyperedges in $\nei{\cliG}(\cli(\hE_1))$. Let $V(\hE_1)$ be the set of nodes that are incident to one of the hyperedges in $\hE_1$. Further, let $A_k$ be the set of hyperedges that has $k$ nodes in $V(\hE_1)$, i.e., 
\[
A_k \defeq \B\{\he\in \binom{[n]}{d}\b| |\he\cap V(\hE_1)|=k\B\}\,.
\]
Here $k$ is at least 2 and at most $d$. The size of $A_k$ is at most
\[
|A_k|\le \binom{|V(\hE_1)|}{k}\binom{n}{d-k}=O_n(n^{d-k})\,.
\]
We wish to union bound the probability that any hyperedge $\he\in A_k$ being present in $\cliG.$

Let $\he\in A_k$. For $\he$ to appear in $\cliG$, every edge in $\proj(\he)\backslash \proj(\hE_1)$ should be covered in at least one hyperedge in $\hE_{\rhG}$.
Now let us look at the possible ways for this to happen.  
For any $\he\in A_k$, let $\cS^\he \defeq \{S_1^\he, S_2^\he, \cdots, S_m^\he\}$ be the set of subset of $\he$ such that $\proj(S_i)\not\subset \proj(\hE_1)$ and $\proj(S_i)\ne \emptyset$. 
If a hyperedge covers an edge in $\proj(\he)\backslash \proj(\hE_1)$, it must intersect with $h$ at one of the sets in $\cS^\he$.
Now let event $A_i^\he$ be the event that at least one hyperedge in 
\[
\hE_i^\he\defeq \B\{\he'\in \binom{[n]}{d}\b| \he'\cap \he = S_i^\he\B\}\,
\]
is in $\hE_\rhG$. Note that $\{S_i^\he\}_i$ are disjoint set of hyperedges, so $\{A_i^\he\}_i$ are independent events.
We have
\begin{equation}\label{eq:decom-inclusion}
\begin{split}
&\Pr(\he\in \cliG\b|\hE_1\in\hE_\rhG)\\ 
&=  \sum_{I\subset [m ]} \1\{(\proj(\he)\backslash \proj(\hE_1)) \subset \cup_i\proj(S_i^\he)\} \Pr\b((\cap_{i\in I}A_i^\he)\cap(\cap_{i\in [m ]\backslash [I]}(A_i^\he)^c) |\hE_1\subset \hE_\rhG\b)\\
&\le \sum_{I\subset [m ]} \1\{(\proj(\he)\backslash \proj(\hE_1)) \subset \cup_i\proj(S_i^\he)\} \Pr(\cap_{i\in I}A_i^\he|\hE_1\subset \hE_\rhG)\\
&=  \sum_{I\subset [m ]} \1\{(\proj(\he)\backslash \proj(\hE_1)) \subset \cup_i\proj(S_i^\he)\} \prod_{i\in I}\Pr(A_i^\he|\hE_1\subset \hE_\rhG)\,.
\end{split}
\end{equation}
The inequality is by inclusion of events, and the second equality is by independence of $\{A_i^\he\}_i$.
Now we show an upperbound on $\Pr(A_i^\he|\hE_1\subset \hE_\rhG)$. There are $\binom{n-|S_i^\he|}{d-|S_i^\he|}$ hyperedges in $\hE_i^\he$. And none of them are in $\hE_1$ since $\proj(S_i)\not\subset \proj(\hE_1)$. Therefore, 
\[
\Pr(A_i^\he|\hE_1\subset \hE_\rhG) = 1-(1-p)^{\binom{n-|S_i^\he|}{d-|S_i^\he|}} = O_n(pn^{d-|S_i^\he|}) = O_n(n^{-|S_i^\he|+1+\delta})\,.
\]
Note that $|S_i^\he|\ge 2$ and $\delta< 1$, this is always $o_n(1)$.
Since the number of terms in \eqref{eq:decom-inclusion} is bounded by $2^m\le 2^{2^d}$ which is $O_n(1)$,  we have
\begin{equation}\label{eq:braching-hyperedge}
\Pr(\he\in \cliG\b|\hE_1\in\hE_\rhG) = O_n\B(\max_{ \substack{I\subset[m]:\\(\proj(\he)\backslash \proj(\hE_1)) \subset \cup_i\proj(S_i^\he)}} n^{-\sum_{i\in I}(|S_i^\he|-1-\delta)}\B)\,.
\end{equation}
Because $\he\in A_k$,  we know $\proj(\he)\cap \proj(\hE_1)$ is a subset of a size-$k$ clique in $\he$. So the above probability can be further relaxed to $O_n(n^{-g_k(\delta)})$. Recall
\begin{equation}\label{eq:braching-hyperedge-2}
g_k(\delta) = \min_{\substack{I\subset [m]:\\ \b(\proj(\he)\backslash \binom{U_\he}{2}\b) \subset \cup_i\proj(S_i^\he)}} \sum_{i\in I}(|S_i^\he|-1-\delta)\,.
\end{equation}
Here $U_\he$ is a size-$k$ subset of $\he$. Note that any clique in $\cS^\he$ has size at least 2, $g_k(\delta)$ is always non-negative.
Therefore, by union bound over all hyperedges in $A_k$ for any $2\le k\le d$, 
\[
\Pr\b(\nei{\cliG}(\cli(\hE_1))\ne \emptyset | \hE_1\subset \hE_{\rhG}\b) = \sum_{k=2}^d |A_k|O_n(n^{-g_k(\delta)}) = O_n(n^{\min_k \{g_k(\delta)+k-d\}})\,.
\]

Given the bound for $\min_k \{g_k(\delta)+k-d\}$ in Lemma~\ref{lem:cover-bound}, we have for any $\delta<\frac{d-1}{d+1}$,
\[
\Pr\b(\nei{\cliG}(\cli(\hE_1))\ne \emptyset | \hE_1\subset \hE_{\rhG}\b) = O_n(n^{-\frac{d-1}{d+1}+ \delta})\,.
\]

Now we prove the case when $\delta = \frac{d-1}{d+1}$. In this case, instead of using union bound, we need to be more careful and consider the correlation between different hyperedges using Harris Inequality. 

\begin{lemma}[Harris Inequality \cite{harris1960lower}]\label{lem:harris}
Let $A$ and $B$ be two events in the probability space defined by $\rhG$. If both $A$ and $B$ are increasing with respect to all possible hyperedges in $\binom{[n]}{d}$, then
\[
\Pr(A|B)\ge \Pr(A)\,.
\]
\end{lemma}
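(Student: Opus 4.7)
The plan is to reduce the statement to the equivalent covariance inequality $\Pr(A \cap B) \geq \Pr(A)\,\Pr(B)$: dividing through by $\Pr(B) > 0$ recovers $\Pr(A \mid B) \geq \Pr(A)$, and the case $\Pr(B) = 0$ is vacuous. The underlying probability space is the product space $\{0,1\}^N$ with $N = \binom{n}{d}$, where each coordinate $X_h$ is an independent $\bern(p)$ variable indicating whether the hyperedge $h \in \binom{[n]}{d}$ belongs to $\rhG$. Writing $f = \1_A$ and $g = \1_B$, the goal becomes to show $\E[fg] \geq \E[f]\,\E[g]$ for these two bounded, coordinate-wise increasing functions of $(X_h)_h$.

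I would prove the slightly more general statement: for any two bounded increasing $f,g : \{0,1\}^N \to \mathbb{R}$ under the product Bernoulli measure, $\E[fg] \geq \E[f]\,\E[g]$, proceeding by induction on $N$. The base case $N=1$ is the elementary identity
\[
\E[fg] - \E[f]\,\E[g] \;=\; p(1-p)\b(f(1)-f(0)\b)\b(g(1)-g(0)\b) \;\geq\; 0,
\]
where both factors are nonnegative by monotonicity.

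For the inductive step, condition on the last coordinate $X_N$ and define the partial averages $F(x_1,\ldots,x_{N-1}) \defeq \E\b[f(x_1,\ldots,x_{N-1},X_N)\b]$ and $G(x_1,\ldots,x_{N-1}) \defeq \E\b[g(x_1,\ldots,x_{N-1},X_N)\b]$. Since averaging over one coordinate preserves monotonicity in the remaining coordinates, $F$ and $G$ are increasing on $\{0,1\}^{N-1}$. For each fixed $(x_1,\ldots,x_{N-1})$, the base case applied to the two increasing one-variable functions $f(x_1,\ldots,x_{N-1},\cdot)$ and $g(x_1,\ldots,x_{N-1},\cdot)$ gives $\E_{X_N}[fg] \geq F(x_1,\ldots,x_{N-1})\,G(x_1,\ldots,x_{N-1})$. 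Taking expectation over $(X_1,\ldots,X_{N-1})$ and applying the induction hypothesis to the increasing functions $F,G$ on $\{0,1\}^{N-1}$ yields $\E[fg] \geq \E[FG] \geq \E[F]\,\E[G] = \E[f]\,\E[g]$, closing the induction.

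The main obstacle is essentially nonexistent, as this is the classical FKG inequality for product measures (Harris's original setting). The only minor point requiring attention is verifying that $F$ inherits monotonicity from $f$, which is immediate: for $x \leq x'$ coordinate-wise one has $f(x,0) \leq f(x',0)$ and $f(x,1) \leq f(x',1)$, so the convex combination $(1-p)f(x,0) + p\,f(x,1)$ preserves the inequality.
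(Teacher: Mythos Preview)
Your proof is correct and is the standard induction-on-coordinates argument for the Harris/FKG inequality on product spaces. Note, however, that the paper does not actually prove this lemma: it is stated with a citation to Harris's original paper \cite{harris1960lower} and used as a black box, so there is no paper proof to compare against. Your argument is exactly the classical one and would serve perfectly well as a self-contained appendix proof.
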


Let $A=\{\he_1,\he_2,\cdots,\he_m\}$ be the set of all hyperedges in $\nei{\cliG}(\cli(\hE_1))$, $A = \cup_{k=1}^{d|\hE_1|}A_k$. We have
\[
\begin{split}
    &\Pr\b(\nei{\cliG}(\cli(\hE_1))= \emptyset | \hE_1\subset \hE_{\rhG}\b)\\
    &=\prod_{i=1}^m \Pr\b( \he_i\not\in \hE_\rhG| \forall j<i,\he_j\not\in \hE_\rhG, \hE_1\subset \hE_{\rhG}\b)\\
    &\ge \prod_{i=1}^m \Pr\b( \he_i\not\in \hE_\rhG|  \hE_1\subset \hE_{\rhG}\b) \\
    &= \prod_{k=2}^{d}\prod_{h\in A_k}\Pr\b( \he\not\in \hE_\rhG|  \hE_1\subset \hE_{\rhG}\b)
\end{split}
\]
Here we used that $\he_i\not\in \hE_\rhG$ is decreasing event for any $i$ and applied Harris Inequality in Lemma~\ref{lem:harris}. Using the bound we get in \eqref{eq:braching-hyperedge} and \eqref{eq:braching-hyperedge-2}, we have
\[
\Pr\b(\nei{\cliG}(\cli(\hE_1))= \emptyset | \hE_1\subset \hE_{\rhG}\b) \ge \prod_{k=2}^{d} \b(1-O_n(n^{-g_k(\delta)})\b)^{|A_k|} = \prod_{k=2}^{d} \exp \B(-O_n(n^{d-k-g_k(\delta)})\B)\,.
\]
By Lemma~\ref{lem:cover-bound}, when $\delta=\frac{d-1}{d+1}$, $d-k-g_k(\delta)\ge 0$. So $\Pr\b(\nei{\cliG}(\cli(\hE_1))= \emptyset | \hE_1\subset \hE_{\rhG}\b)=\Omega_n(1)$, thus proving the second case stated in Lemma~\ref{lem:branching}. \hfill\qed

\end{document}